\documentclass[twoside,11pt,reqno]{amsart}
\usepackage{amsmath,amssymb,amscd,mathrsfs,amscd, todonotes}
\usepackage{graphics,verbatim}
\usepackage{todonotes}
\usepackage{hyperref}

\oddsidemargin .2in \evensidemargin .2in \textwidth 6in


\let\oldsection=\section

\newcommand{\losemi}{{\otimes \kern -.78em \ltimes}}
\newcommand{\rosemi}{{\otimes \kern -.78em \rtimes}}

\newcommand{\Hom}{\ensuremath{\operatorname{Hom}}}

\newcommand{\sgn}{\operatorname{sgn}}
\newcommand{\Ext}{\operatorname{Ext}}

\newcommand{\C}{\mathbb{C}}

\newcommand{\res}{\ensuremath{\operatorname{res}}}


\newcommand{\tr}{\ensuremath{\operatorname{tr} }}

\newcommand{\HH}{\operatorname{H}}





\renewcommand{\mod}{\operatorname{mod}}

\newcommand{\He}{\mathcal{H}}

\makeatletter
\newcommand{\leqnomode}{\tagsleft@true}
\newcommand{\reqnomode}{\tagsleft@false}
\makeatother




\newtheorem{theorem}{Theorem}[subsection]

\makeatletter\let\c@fact\c@theorem\makeatother

\makeatletter\let\c@note\c@theorem\makeatother

\newtheorem{lemma}{Lemma}[subsection]
\makeatletter\let\c@lemma\c@theorem\makeatother

\makeatletter\let\c@lemma\c@theorem\makeatother

\makeatletter\let\c@alg\c@theorem\makeatother

\newtheorem{prop}{Proposition}[subsection]
\makeatletter\let\c@prop\c@theorem\makeatother

\makeatletter\let\c@conj\c@theorem\makeatother

\newtheorem{cor}{Corollary}[subsection]
\makeatletter\let\c@cor\c@theorem\makeatother

\newtheorem{defn}{Definition}[subsection]
\makeatletter\let\c@defn\c@theorem\makeatother

\theoremstyle{definition}

\newtheorem{remark}{Remark}[subsection]
\makeatletter\let\c@remark\c@theorem\makeatother

\makeatletter\let\c@example\c@theorem\makeatother
\numberwithin{equation}{subsection}

%
%
\usepackage[capitalise]{cleveref}
\crefname{theorem}{Theorem}{Theorems}
\crefname{fact}{Fact}{Facts}
\crefname{note}{Note}{Notes}
\crefname{lemma}{Lemma}{Lemmas}
\crefname{alg}{Algorithm}{Algorithms}
\crefname{remark}{Remark}{Remarks}
\crefname{example}{Example}{Examples}
\crefname{prop}{Proposition}{Propositions}
\crefname{conj}{Conjecture}{Conjectures}
\crefname{cor}{Corollary}{Corollaries}
\crefname{defn}{Definition}{Definitions}
\crefname{equation}{\!\!}{\!\!} 


\newcounter{listequation}


\begin{document}

\title[Support varieties]
{\bf Support varieties for Hecke algebras}
\author{\sc Daniel K. Nakano}
\address
{Department of Mathematics\\ University of Georgia \\
Athens\\ GA~30602, USA}
\thanks{Research of the first author was supported in part by NSF
grant  DMS-1701768}
\email{nakano@math.uga.edu}
\author{\sc Ziqing Xiang}
\address
{Department of Mathematics\\ University of Georgia \\
Athens\\ GA~30602, USA}
\email{ziqing@uga.edu}
\date{February 2018}
\subjclass{Primary 20C30}
\begin{abstract}
Let $\He_{q}(d)$ be the Iwahori-Hecke algebra for the symmetric group, where $q$ is a primitive $l$th root of unity. In this 
paper we develop a theory of support varieties which detects natural homological properties such as the complexity of modules. The theory 
the authors develop has a canonical description in an affine space where computations are tractable. The ideas involve the interplay with 
the computation of the cohomology ring due to Benson, Erdmann and Mikaelian, the theory of vertices due to Dipper and Du, and 
branching results for cohomology by Hemmer and Nakano. Calculations of support varieties and vertices are presented for permutation, Young and 
classes of Specht modules. Furthermore, a discussion of how the authors' results can be extended to other Hecke algebras for other classical groups is presented at the end of the paper.  
\end{abstract}
\maketitle

\maketitle
\vskip 1cm

\section{Introduction}

\subsection{} Support varieties have been developed  in a variety of contexts that involve categories which are Frobenius 
(i.e., where injectivity and projectivity are equivalent) and have a monoidal tensor structure. The monoidal tensor structure 
generally arises from a Hopf structure on an underlying algebra.  Examples of such categories include modules for finite group schemes, quantum groups and 
Lie superalgebras (cf. \cite{FP1,FP2} \cite{FPe}, \cite{NPal}, \cite{BNPP}). More recently, the key properties of support varieties have be used to create axiomatic support theory 
and tensor triangular geometry. Very little is known about extracting geometric properties from Frobenius categories where there is no underlying coproduct.  

In this paper we will develop a support variety theory for the Iwahori-Hecke algebra for Weyl group for the symmetric group (i.e., type $A$), and for Hecke algebras for other classical groups. 
In general, the module category for Hecke algebras lacks a tensor structure. This presents major difficulties in executing important constructions. Our modified theory of support varieties differs from approaches proposed using the 
Hochschild cohomology (cf. \cite{L}). In those contexts, varieties can be defined, however it is not clear how (i) these varieties can be computed and (ii) how they can be used 
in the general theory. It is anticipated that our methods along with several recent developments in extending the theory in type $A$ to other Weyl groups (cf. \cite{DPS2,DPS3}) 
might lead to a general finite generation results entailing the cohomology ring and the creation of a general theory of supports with realizations for arbitrary Hecke algebras. 

\subsection{} The paper is organized as follows. In Section 2, we introduce the conventions and notation that will be used throughout the paper. The following 
section, Section 3, provides the definition and details of the results on transfer and its relationship to cohomology. In Section 4, using the explicit description of the cohomology ring $R_d :=
\HH^{\bullet}(\He_{q}(d), \C)$ due to Benson, Erdmann and Mikaelian \cite{BEM} we show that (i) $R_{\lambda}:=\HH^{\bullet}(\He_{q}(\lambda), \C)$
is finitely-generated and (ii) $\Ext^{\bullet}_{\He_{q}(\lambda)}(\C, M)$ is finitely generated as a $R_{\lambda}$-module for any composition $\lambda$. Here 
$M$ is a finite-dimensional $\He_{q}(\lambda)$-module. The results above allow one to use the ideas involving branching to Young subgroups 
from Hemmer and Nakano \cite{HN} to construct support varieties for any $\He_{q}(d)$. These ideas were important for the recent proof of the Erdmann-Lim-Tan Conjecture \cite{ELT}
by Cohen, Hemmer and Nakano \cite{CHN} that involved computing the complexity of the Lie module. Our results rely heavily on the work of Dipper and Du (cf. \cite{DD,Du}) that provides the technical machinery to prove many of the results in this section. 

Following the seminal work of Alperin, we define complexity for $\He_{q}(d)$-modules in Section 5. The main point of this section is to utilize the 
representation theory of the Hecke algebras to demonstrate that the complexity of a module is in fact equal to the dimension of our support varieties (as defined in Section 4). As an application we prove that the complexity of any modules is 
less than the complexity of the trivial module. Note that without a tensor structure (as in our case) this is a non-trivial fact. Subsequently, in Section 6, we compute the complexity and varieties for Young and permutation modules, which extends the 
earlier work in \cite{HN} for symmetric groups to Hecke algebras of type $A$. 

In Section 7, we construct a new invariant for Specht modules called the graded dimension and relate this graded dimension to product of cyclotomic polynomials. This definition in conjunction with results for relative cohomology 
allows us to show that the vertex of the Specht module satisfies certain numerical constraints. As a by-product, we are able to explicitly compute the vertex of Specht modules for a certain class of partitions. Finally, in Section 8, 
we apply our results for Hecke algebras of type A with various Morita equivalences to construct support varieties for Hecke algebras of types $B/C$ and $D$, and show that the complexity for modules for these algebras 
is equal to the dimension of the corresponding varieties. Several open questions of further interest are posed at the end of the paper.

\section{Notation and Preliminaries} \label{notation}

\subsection{} Throughout this paper, we will work over the complex numbers $\C$, although many of the constructions will work over an arbitrary field. Let $q\in \C^{*}$ and $\Sigma_{d}$ be the symmetric group on $d$ letters. The Hecke algebra $\He_q(d) := \He_{q}(\Sigma_d)$ is the free $\C$-module with basis $\{ T_w : w \in \Sigma_d \}$ with multiplication defined by 
\[ T_w T_s := \begin{cases}
	T_{ws}, & \text{if $\ell(ws) > \ell(w)$,} \\
	qT_{ws} + (q-1)T_w, & \text{otherwise,}
\end{cases} \]
where $s = (i, i + 1) \in \Sigma_d$ is a simple transposition and $w \in \Sigma_d$. The function $\ell : \Sigma_{d} \to \mathbb{N}$ is the usual length function that is defined for any Weyl group. 

There is an automorphism $^\#$ and an antiautomorphism $^\ast$ of $\He_q(d)$ defined by:
\[ T_w^\# := (-q)^{\ell(w)}(T_{w^{-1}})^{-1}, \text{\quad and } T_w^\ast := T_{w^{-1}}. \]
The maps $^\#$ and $^\ast$ are both involutions. We will also use the dual $^\vee$ defined by:
\[ T_w^\vee := q^{-\ell(w)}T_{w^{-1}}. \]
For any $\He_q(d)$-module $M$ one can define a dual module $M^\ast := \Hom_{\C}(M, \C)$, where the action of $\He_q(d)$ is given by $(hf)(m) := f(h^\ast m)$.

Let $l$ be the smallest integer such that $1 + q + \cdots + q^{l - 1} = 0$, and set $l := \infty$ if no such integer exists. If $q\in \C^{*}$ is a primitive $j$th root of unity then $l = j$. Furthermore, 
if $q$ is a not root of unity then $\He_q(d)$ is semisimple. 

Let $\Lambda(d) := \{\lambda \vDash d\}$ be the set of all {\em compositions} of $d$ and $\Lambda^+(d) := \{\lambda \vdash d\}$ be the set of all {\em partitions} of $d$. 
Given two compositions $\lambda, \mu \in \Lambda(d)$ (resp. partitions), we denote $\mu \vDash \lambda$ (resp. $\mu \vdash \lambda$) if $\mu$ is finer than $\lambda$. 
A partition/composition $\lambda$ of $d$ is called {\em $l$-parabolic} if every part of $\lambda$ is either $1$ or $l$, and it is {\em simple} provided that exactly one part of $\lambda$ is not $1$ and all other parts are $1$'s.

A partition $\lambda = (\lambda_1, \lambda_2, \dots)$ is called {\em $l$-restricted} if $\lambda_i - \lambda_{i + 1} \leq l - 1$ for all $i$. The set of the $l$-restricted partitions of $d$ will be denoted by $\Lambda^+_{\res}(d)$. 
A partition $\lambda$ is called {\em $l$-regular} if its {\em transpose} $\lambda'$ is $l$-restricted. The set of all $l$-regular partitions of $d$ is denoted by $\Lambda^+_{\operatorname{reg}}(d)$.

\subsection{} We refer the reader to \cite{DJ1} and \cite{Mat} for details about the representation theory of $\He_q(d)$. The major classes of representations parallels those for the modular representation theory of the symmetric group. 
For each $\lambda \in \Lambda^+(d)$ there is a {\em $q$-Specht module} of the Hecke algebra $\He_q(d)$, denoted by $S^\lambda$. If $\lambda \in \Lambda^+_{\operatorname{reg}}(d)$ then $S^\lambda$ has a unique simple quotient 
denoted by $D^\lambda$. One obtains a complete collection of non-isomorphic simple modules $D^\lambda$ for $\lambda \in \Lambda^+_{\operatorname{reg}}(d)$ for $\He_q(d)$-module in this way. 
These simple modules are self-dual and absolutely irreducible.

For a composition $\lambda \in \Lambda(d)$, let $\Sigma_\lambda$ be the corresponding Young subgroup of $\Sigma_d$, that is $\Sigma_\lambda \cong \Sigma_{\lambda_1} \times \Sigma_{\lambda_2} \times \cdots$. 
Associated to this Young subgroup there is a corresponding subalgebra of $\He_q(d)$:
\[ \He_q(\lambda) := \langle T_w \mid w \in \Sigma_\lambda \rangle \cong \He_q(\lambda_1) \times \He_q(\lambda_2) \times \cdots. \]
Set
\[ x_\lambda := \sum_{w \in \Sigma_\lambda} T_w. \]
Define the {\em permutation module} $M^{\lambda} := \He_q(d) x_\lambda$. One also has the isomorphism $M^{\lambda}\cong \operatorname{ind}_{\He_q(\lambda)}^{\He_q(d)}\C$. 
Given $\lambda \in \Lambda^+(d)$, there is a unique indecomposable direct summand of $M^\lambda$ containing $S^\lambda$ that is the {\em Young module} $Y^\lambda$. 
All other summands are Young modules whose partitions are strictly greater than $\lambda$ in the dominance ordering. Furthermore, $Y^\lambda \cong Y^\mu$ if and only if $\lambda = \mu$. 

The Hecke algebra $\He_q(d)$ has two one-dimensional representations \cite[1.14]{Mat}, which we denote $\C$ and $\sgn$, given by:
\begin{equation} \label{onedim}
\C(T_w) := q^{\ell(w)} \quad \text{and} \quad \sgn(T_w) := (-1)^{\ell(w)}.
\end{equation}
When $q = 1$ these specialize to the usual trivial and sign representations of $\C\Sigma_d$. 

In general the tensor product of two $\He_q(d)$-modules is not an $\He_q(d)$-module, since $\He_q(d)$ is not a Hopf algebra. However the automorphism $\#$ lets us define, for each $\He_q(d)$-module $M$, a 
new module $M^{\#}$ with the same underlying vector space and with action given by $h \circ m := h^\# m$. This specializes for $q = 1$ to tensoring with the sign representation. This is denoted by 
\[ M \otimes \sgn := M^\#. \]
The simple $\He_q(d)$-modules can also be indexed by $\Lambda^{+}_{\res}(d)$. For $\lambda \in \Lambda^{+}_{\res}(d)$ denote the corresponding simple module by $D_\lambda$. 
It is a fact that $D_\lambda = \operatorname{soc}_{\He_{q}(d)}(S^\lambda)$. The relationship between these two labellings is given by:
\begin{equation}
D^\lambda \cong D_{\lambda'} \otimes \sgn \quad \text{for any $\lambda \in \Lambda^{+}_{\operatorname{reg}}(d)$}.
\end{equation}
We remark that that tensoring with the sign representation turns Specht modules into dual Specht modules and vice-versa (cf. \cite[6.7]{J1}, \cite[Exer. 3.14]{Mat}): 
\begin{equation} \label{trdual}
S^\lambda \otimes \sgn  \cong (S^{\lambda'})^\ast := S_{\lambda'}.
\end{equation}

\section{Transfer and Cohomology}

\subsection{} We begin by defining the notion of transfer for Hecke algebras and state one of the main results that involves the decomposition of induced modules. For an algebra $A$ and an $A$-$A$-bimodule, $M$, let $M^A := \{m \in M : a m = m a, \text{for all $a \in A$}\}$.

Let $\lambda$ be a composition and $\mu, \nu \vDash \lambda$ be two subcompositions. We abuse the notation by letting $\Sigma_\lambda / \Sigma_\mu$, $\Sigma_\nu \backslash \Sigma_\lambda$ and $\Sigma_\nu \backslash \Sigma_\lambda / \Sigma_\mu$ 
denote the set of {\em distinguished left/right/double coset representatives}, respectively. For a distinguished double coset representative $w \in \Sigma_\nu \backslash \Sigma_\lambda / \Sigma_\mu$, $\Sigma_\nu \cap \Sigma_\mu^w$ is always a Young subgroup. More precisely, there exists a unique subcomposition of $\lambda$, denoted by $\nu \cap \mu^w \vDash \lambda$, such that $\Sigma_{\nu \cap \mu^w} = \Sigma_\nu \cap \Sigma_\mu^w$, where $\Sigma_\mu^w := w \Sigma_\mu w^{-1}$. One has a 
{\em restriction map} $\res_{\lambda, \mu} : M^{\He_q(\lambda)} \hookrightarrow M^{\He_q(\mu)}$.

\begin{defn}[\cite{Jo, L}] Let $\mu \vDash \lambda$ be two compositions.
For a $\He_q(\lambda)$-$\He_q(\lambda)$-bimodule $M$, the {\em transfer map} $\tr_{\mu, \lambda} : M^{\He_q(\mu)} \to M^{\He_q(\lambda)}$ is defined as follow:
\[ \tr_{\mu, \lambda}(m) := \sum_{w \in \Sigma_\lambda / \Sigma_\mu} T_w x T_w^\vee. \]
\end{defn}

\begin{theorem}[Mackey Decomposition, {\cite[Theorem 2.29, 2.30]{Jo}}] \label{thm:mackey}
Let $\mu, \nu \vDash \lambda$. For a $\He_q(\lambda)$-$\He_q(\lambda)$-bimodule $M$,
\[ M \uparrow_{\He_{q}(\mu)}^{\He_{q}(\lambda)}\  \downarrow_{\He_{q}(\nu)} \cong \sum_{w \in \Sigma_\nu \backslash \Sigma_\lambda / \Sigma_\mu} (T_w \otimes_{\He_q(\lambda)} M) \uparrow_{\He_{q}(\nu \cap \mu^w)}^{\He_{q}(\nu)}. \]
Moreover, for an $m \in M^{\He_q(\mu)}$,
\[ \res_{\lambda, \nu} \tr_{\mu, \lambda}(m) = \sum_{w \in \Sigma_\nu \backslash \Sigma_\lambda / \Sigma_\mu} \tr_{\nu \cap \mu^w, \nu} (T_w m T_w^\vee). \]
\end{theorem}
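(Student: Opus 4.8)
The plan is to reduce both assertions to a single decomposition of $\He_q(\lambda)$, viewed as an $\He_q(\nu)$-$\He_q(\mu)$-bimodule, and to transport it through the relevant tensor products, following the pattern of the classical Mackey theorem for group algebras. The combinatorial engine is the Hecke-algebra refinement of the double coset decomposition $\Sigma_\lambda = \bigsqcup_w \Sigma_\nu w \Sigma_\mu$: a distinguished double coset representative $w$ is simultaneously a distinguished left coset representative for $\Sigma_\nu$ and a distinguished right coset representative for $\Sigma_\mu$, and every element of $\Sigma_\nu w \Sigma_\mu$ factors uniquely as $ywz$ with $y$ a distinguished left coset representative of $\Sigma_\nu\cap\Sigma_\mu^w = \Sigma_{\nu\cap\mu^w}$ in $\Sigma_\nu$, $z$ a distinguished right coset representative of $w^{-1}(\Sigma_\nu\cap\Sigma_\mu^w)w$ in $\Sigma_\mu$, and $\ell(ywz) = \ell(y)+\ell(w)+\ell(z)$. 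These length identities --- standard for parabolic subgroups of Coxeter groups --- imply $T_{ywz} = T_yT_wT_z$ from the defining relations of $\He_q(\lambda)$, so the $T_{ywz}$ form a $\C$-basis of a sub-bimodule $\He_q(\nu)T_w\He_q(\mu)$ and
\[ \He_q(\lambda) \;=\; \bigoplus_{w\in\Sigma_\nu\backslash\Sigma_\lambda/\Sigma_\mu}\He_q(\nu)\,T_w\,\He_q(\mu) \]
as $\He_q(\nu)$-$\He_q(\mu)$-bimodules.

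Next I would identify each summand. For $a\in\Sigma_\nu\cap\Sigma_\mu^w$ set $b := w^{-1}aw\in\Sigma_\mu$; the bookkeeping above yields $\ell(a)=\ell(b)$ as well as $\ell(aw)=\ell(a)+\ell(w)$ and $\ell(wb)=\ell(w)+\ell(b)$, hence the straightening relation $T_aT_w = T_wT_b$, and likewise $T_bT_{w^{-1}} = T_{w^{-1}}T_a$. So on $T_w\He_q(\mu)$ the left action of $\He_q(\nu\cap\mu^w)$ coincides with a right action through the algebra isomorphism $\He_q(\nu\cap\mu^w)\xrightarrow{\ \sim\ }\He_q(w^{-1}(\nu\cap\mu^w)w)\subseteq\He_q(\mu)$ induced by conjugation by $w$; consequently $\He_q(\nu)T_w\He_q(\mu)\cong\He_q(\nu)\otimes_{\He_q(\nu\cap\mu^w)}{}^{w}\!\He_q(\mu)$ as bimodules, where ${}^{w}\!\He_q(\mu)$ denotes $\He_q(\mu)$ with its $\He_q(\nu\cap\mu^w)$-action twisted by this conjugation. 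Tensoring the displayed decomposition with $M$ over $\He_q(\mu)$, invoking associativity of $\otimes$ and recognizing ${}^{w}\!\He_q(\mu)\otimes_{\He_q(\mu)}M$ as the conjugate module $T_w\otimes_{\He_q(\lambda)}M$, then produces the first isomorphism once each twisted summand is induced up from $\He_q(\nu\cap\mu^w)$ to $\He_q(\nu)$.

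For the transfer identity, take $m\in M^{\He_q(\mu)}$ and group the distinguished left coset representatives $\Sigma_\lambda/\Sigma_\mu$ by the double coset containing them: those inside $\Sigma_\nu w\Sigma_\mu$ are precisely the $yw$ with $y$ ranging over distinguished left coset representatives of $\Sigma_\nu\cap\Sigma_\mu^w$ in $\Sigma_\nu$. Since $T_{yw}=T_yT_w$ and $T_{yw}^\vee = T_w^\vee T_y^\vee$ (the latter because $^\vee$ reverses products and $\ell(yw)=\ell(y)+\ell(w)$), the partial sum over this double coset is $\sum_y T_y(T_wmT_w^\vee)T_y^\vee$. The one genuinely delicate point is that $T_wmT_w^\vee\in M^{\He_q(\nu\cap\mu^w)}$, which follows by pushing $m\in M^{\He_q(\mu)}$ through the relations $T_aT_w=T_wT_b$, $T_bT_{w^{-1}}=T_{w^{-1}}T_a$ and $T_w^\vee = q^{-\ell(w)}T_{w^{-1}}$. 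Thus the partial sum equals $\tr_{\nu\cap\mu^w,\nu}(T_wmT_w^\vee)$, and summing over all double cosets gives $\res_{\lambda,\nu}\tr_{\mu,\lambda}(m)=\sum_w\tr_{\nu\cap\mu^w,\nu}(T_wmT_w^\vee)$. I expect the main obstacle to be organizational rather than conceptual: pinning down the distinguished-representative and length facts precisely enough that every straightening identity holds on the nose with no correction term, and keeping the conjugation-by-$w$ isomorphism coherent across the left and right tensor factors throughout.
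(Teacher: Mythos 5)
The paper does not prove this theorem; it is cited verbatim from Jones \cite[Theorems 2.29, 2.30]{Jo}, so there is no in-paper argument to compare against. Your write-up is, modulo one slip, a correct reconstruction along the standard lines one would expect: the $\He_q(\nu)$-$\He_q(\mu)$-bimodule decomposition $\He_q(\lambda) = \bigoplus_w \He_q(\nu) T_w \He_q(\mu)$ indexed by distinguished double coset representatives, the straightening identities $T_aT_w = T_wT_b$ and $T_bT_{w^{-1}} = T_{w^{-1}}T_a$ coming from length-additivity, the identification of each summand as an induced twisted module, and the transfer formula obtained by partitioning the distinguished left coset representatives of $\Sigma_\mu$ in $\Sigma_\lambda$ by the double cosets they lie in and checking $T_w m T_w^\vee \in M^{\He_q(\nu\cap\mu^w)}$.

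The slip: the assertion that every element of $\Sigma_\nu w\Sigma_\mu$ factors \emph{uniquely} as $ywz$ with $y$ a distinguished left coset representative of $\Sigma_{\nu\cap\mu^w}$ in $\Sigma_\nu$ \emph{and} $z$ a distinguished right coset representative of $w^{-1}(\Sigma_\nu\cap\Sigma_\mu^w)w$ in $\Sigma_\mu$ is false --- requiring both to be distinguished undercounts $|\Sigma_\nu w\Sigma_\mu| = |\Sigma_\nu|\,|\Sigma_\mu|\,/\,|\Sigma_\nu\cap\Sigma_\mu^w|$ by a factor of $|\Sigma_\nu\cap\Sigma_\mu^w|$. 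The correct length-additive factorization (Deodhar's lemma) takes $y$ distinguished in $\Sigma_\nu/(\Sigma_\nu\cap\Sigma_\mu^w)$ and $z$ \emph{arbitrary} in $\Sigma_\mu$, with $\ell(ywz) = \ell(y)+\ell(w)+\ell(z)$. Fortunately this does not propagate: $\He_q(\nu)T_w\He_q(\mu)$ still has $\C$-basis $\{T_x : x\in\Sigma_\nu w\Sigma_\mu\}$ (spanning is immediate from the correct factorization), the bimodule direct sum decomposition of $\He_q(\lambda)$ stands, and the transfer argument only ever invokes the grouping of left coset representatives as $\{yw\}$, which you state correctly. The invariance check $T_a(T_wmT_w^\vee) = (T_wmT_w^\vee)T_a$ for $a\in\Sigma_\nu\cap\Sigma_\mu^w$ via $T_aT_w = T_wT_b$, $T_bm = mT_b$, and $T_bT_{w^{-1}} = T_{w^{-1}}T_a$ is exactly right and is the genuinely delicate point you correctly flagged. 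With the factorization statement corrected, the proof is sound.
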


\subsection{} For $\He_q(\lambda)$-modules $M$ and $N$, the transfer map on the $\He_q(\lambda)$-$\He_q(\lambda)$-bimodule $\Hom_{\C}(M, N)$ induces the transfer map on extension groups: 
\[ \tr_{\mu, \lambda} : \Ext^\bullet_{\He_q(\mu)}(M, N) \to \Ext^\bullet_{\He_q(\lambda)}(M, N), \]
and the restriction map on $\Hom_{\C}(M, N)$ induces the restriction map:
\[ \res_{\lambda, \mu} : \Ext^\bullet_{\He_q(\lambda)}(M, N) \to \Ext^\bullet_{\He_q(\mu)}(M, N). \]
The restriction map might be omitted when it is clear from the context. For instance, for $\alpha \in \Ext^\bullet_{\He_q(\lambda)}(M, N)$, we use the shorthand convention: $\tr_{\mu, \lambda}(\alpha) := \tr_{\mu, \lambda} \res_{\lambda, \mu} (\alpha)$.

\begin{prop}\label{prop:transfer}
Let $\lambda$ be a composition, and let $M_1, M_2, M_3$ be three $\He_q(\lambda)$-$\He_q(\lambda)$-bimodules. The following statements hold.
\begin{itemize}
\item[(a)] Let $\mu \vDash \lambda$. For $\alpha \in \Ext^\bullet_{\He_q(\lambda)}(M_1, M_2)$,
\[ \tr_{\mu, \lambda} \res_{\lambda, \mu}(\alpha) = (\tr_{\mu, \lambda} 1_{\He_q(\mu)}) \alpha. \]
\item[(b)] Let $\mu \vDash \lambda$. If $\mu$ is a maximal $l$-parabolic subcomposition, then $\tr_{\mu, \lambda} 1_{\He_q(\mu)}$ is invertible in $\He_q(\lambda)$.
\item[(c)] Let $\mu \vDash \lambda$. For $\alpha \in \Ext^\bullet_{\He_q(\mu)}(M_1, M_2)$, $\beta \in \Ext^\bullet_{\He_q(\lambda)}(M_2, M_3)$,
\[ \beta \circ \tr_{\mu, \lambda}(\alpha) = \tr_{\mu, \lambda}(\res_{\lambda, \mu}(\beta) \circ \alpha). \]
\item[(d)] Let $\nu \vDash \lambda$. For $\alpha \in \Ext^\bullet_{\He_q(\lambda)}(M_1, M_2)$, $\beta \in \Ext^\bullet_{\He_q(\nu)}(M_2, M_3)$,
\[ \tr_{\nu, \lambda}(\beta) \circ \alpha = \tr_{\nu, \lambda}(\beta \circ \res_{\lambda, \nu}(\alpha)). \]
\item[(e)] Let $\nu \vDash \mu \vDash \lambda$. For $\beta \in \Ext^\bullet_{\He_q(\nu)}(M_1, M_2)$,
\[ \tr_{\mu, \lambda} \tr_{\nu, \mu}(\beta) = \tr_{\nu, \lambda}(\beta). \]
\item[(f)] Let $\mu, \nu \vDash \lambda$. For $\alpha \in \Ext^\bullet_{\He_q(\mu)}(M_1, M_2)$,
\[ \res_{\lambda, \nu} \tr_{\mu, \lambda} (\alpha) = \sum_{w \in \Sigma_\nu \backslash \Sigma_\lambda / \Sigma_\mu} \tr_{\nu \cap \mu^w, \nu} (T_w \alpha T_w^\vee). \]
\item[(g)] Let $\mu, \nu \vDash \lambda$. For $\alpha \in \Ext^\bullet_{\He_q(\mu)}(M_1, M_2)$ and $\beta \in \Ext^\bullet_{\He_q(\nu)}(M_2, M_3)$,
\begin{align*}
\tr_{\nu, \lambda}(\beta) \circ \tr_{\mu, \lambda}(\alpha) & = \sum_{w \in \Sigma_\nu \backslash \Sigma_\lambda / \Sigma_\mu} \tr_{\nu \cap \mu^w, \lambda}(\beta \circ (T_w \alpha T_w^\vee)) \\
& = \sum_{w \in \Sigma_\mu \backslash \Sigma_\lambda / \Sigma_\nu} \tr_{\mu \cap \nu^w, \lambda}((T_w \beta T_w^\vee) \circ \alpha).
\end{align*}
\item[(h)] Let $\mu \vDash \lambda$. For $\alpha \in \Ext^\bullet_{\He_q(\mu)}(\C, \C)$ and $\beta \in \Ext^\bullet_{\He_q(\mu)}(\C, M_3)$, there exists $\alpha_\nu \in \Ext^\bullet_{\He_q(\lambda)}(\C, \C)$ for each $\nu \vDash \mu$ such that
\[ \tr_{\mu, \lambda}(\beta \circ \alpha) = \sum_{\nu \vDash \mu} \tr_{\nu, \lambda}(\beta) \circ \alpha_\nu. \]
\end{itemize}
\end{prop}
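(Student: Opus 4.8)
The plan is to reduce every statement to a manipulation on cochains, invoking \cref{thm:mackey} for the identities involving double cosets. Fix a projective resolution $P_\bullet \twoheadrightarrow M_1$ over $\He_q(\lambda)$. Since $\He_q(\lambda)$ is free as a right $\He_q(\mu)$-module on the distinguished left coset representatives $\{T_w : w \in \Sigma_\lambda/\Sigma_\mu\}$, with dual basis $\{T_w^\vee\}$ relative to the symmetrizing trace $\tau(T_v)=\delta_{v,e}$, the complex $P_\bullet$ is simultaneously a projective resolution over each subalgebra $\He_q(\nu)$ that occurs. Hence $\Ext^\bullet_{\He_q(\nu)}(M_1,M_i)=H^\bullet\big(\Hom_\C(P_\bullet,M_i)^{\He_q(\nu)}\big)$, and $\res$, $\tr$, and the conjugations $\gamma\mapsto T_w\gamma T_w^\vee$ on $\Ext$ are induced by the corresponding operations on the $\He_q(\lambda)$-bimodule complex $\Hom_\C(P_\bullet,M_i)$, while $\circ$ is computed by lifting cocycles to chain maps. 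I would record once and for all the combinatorics of distinguished (double) coset representatives — additivity $\ell(wv)=\ell(w)+\ell(v)$ for $w$ distinguished and $v$ in the ambient Young subgroup, and the compatible nesting of coset systems — which turn into $T_{wv}=T_wT_v$ and $(T_{wv})^\vee=T_v^\vee T_w^\vee$ and make all the bimodule bookkeeping go through.

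Granting this, (a) is immediate: representing $\alpha$ by an $\He_q(\lambda)$-linear cocycle $f$, one has $T_wfT_w^\vee=(T_wT_w^\vee)f$ by $\He_q(\lambda)$-linearity, so $\tr_{\mu,\lambda}\res_{\lambda,\mu}(\alpha)$ is represented by $\big(\sum_w T_wT_w^\vee\big)f=(\tr_{\mu,\lambda}1_{\He_q(\mu)})\cdot f$, and $\tr_{\mu,\lambda}1_{\He_q(\mu)}\in Z(\He_q(\lambda))$ because $1$ is $\He_q(\lambda)$-central in the regular bimodule. Statements (c) and (d) are the two Frobenius-reciprocity computations: in each one the map defined over the larger algebra is $T_w^\vee$-linear, so one slides the $T_w^\vee$'s past it — this is exactly the cochain identity $\beta\circ\big(\sum_w T_w\alpha T_w^\vee\big)=\sum_w T_w(\res_{\lambda,\mu}(\beta)\circ\alpha)T_w^\vee$ and its mirror image. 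Statement (e) follows from the nesting $\Sigma_\lambda/\Sigma_\nu=(\Sigma_\lambda/\Sigma_\mu)(\Sigma_\mu/\Sigma_\nu)$ together with the two multiplicativity identities above, which collapse the stacked conjugations into one. Statement (f) is nothing but \cref{thm:mackey} applied term-by-term to the bimodules $\Hom_\C(P_n,M_2)$ and then pushed to cohomology. The one genuinely external ingredient is (b): by (a), invertibility of $\tr_{\mu,\lambda}1_{\He_q(\mu)}$ in $\He_q(\lambda)$ is equivalent to every $\He_q(\lambda)$-module being relatively $\He_q(\mu)$-projective, i.e.\ to the regular bimodule splitting off $\He_q(\lambda)\otimes_{\He_q(\mu)}\He_q(\lambda)$; for $\mu$ a maximal $l$-parabolic subcomposition this is precisely Dipper and Du's vertex theory \cite{DD,Du} (the vertex of an arbitrary $\He_q(\lambda)$-module lies in a conjugate of $\He_q(\mu)$), and this is the point at which the machinery flagged in the introduction enters.

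For (g) one substitutes: by (d), $\tr_{\nu,\lambda}(\beta)\circ\tr_{\mu,\lambda}(\alpha)=\tr_{\nu,\lambda}\big(\beta\circ\res_{\lambda,\nu}\tr_{\mu,\lambda}(\alpha)\big)$; expand $\res_{\lambda,\nu}\tr_{\mu,\lambda}(\alpha)$ by (f), distribute $\beta\circ(-)$ over the double-coset sum, pull each $\beta$ into the inner transfer by (c), and collapse the resulting stacked transfers by (e); the second displayed equality is obtained the same way after the bijection $w\mapsto w^{-1}$ identifies $\Sigma_\nu\backslash\Sigma_\lambda/\Sigma_\mu$ with $\Sigma_\mu\backslash\Sigma_\lambda/\Sigma_\nu$ and the roles of $\alpha,\beta$ are interchanged.

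Finally (h), which I expect to be the real work. I would argue by induction on $|\Sigma_\mu|$, the base case $\mu=(1^{|\mu|})$ being trivial since then $\He_q(\mu)=\C$, so $\Ext^{>0}_{\He_q(\mu)}(\C,\C)=0$ and the degree-zero case is immediate from (d). For the inductive step, apply (f) to $\alpha$ with target subcomposition $\mu$ itself, isolating the $w=e$ term to write $\alpha=\res_{\lambda,\mu}\tr_{\mu,\lambda}(\alpha)-\sum_{e\ne w}\tr_{\mu\cap\mu^w,\mu}(T_w\alpha T_w^\vee)$; composing with $\beta$, applying $\tr_{\mu,\lambda}$, and using (c)--(e) yields
\[
\tr_{\mu,\lambda}(\beta\circ\alpha)\;=\;\tr_{\mu,\lambda}(\beta)\circ\tr_{\mu,\lambda}(\alpha)\;-\;\sum_{e\ne w\in\Sigma_\mu\backslash\Sigma_\lambda/\Sigma_\mu}\tr_{\mu\cap\mu^w,\lambda}\big(\res_{\mu,\mu\cap\mu^w}(\beta)\circ(T_w\alpha T_w^\vee)\big).
\]
The first term is the $\nu=\mu$ summand of the assertion, with $\alpha_\mu=\tr_{\mu,\lambda}(\alpha)$. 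Each remaining summand is an instance of (h) for the pair $(\mu\cap\mu^w,\lambda)$, so when $\mu\cap\mu^w\subsetneq\mu$ it is disposed of by the inductive hypothesis (using $\tr_{\sigma,\lambda}\res_{\mu,\mu\cap\mu^w}(\beta)=\tr_{\sigma,\lambda}(\beta)$ for $\sigma\vDash\mu\cap\mu^w$). The obstruction is the terms with $\mu\cap\mu^w=\mu$, $w\ne e$, i.e.\ the classes of the relative Weyl group $N_{\Sigma_\lambda}(\Sigma_\mu)/\Sigma_\mu$; these I would attack by a secondary reduction. If $\mu$ is not itself $l$-parabolic, choose a maximal $l$-parabolic $\rho\vDash\mu$, so that by (a) and (b) the composite $\tr_{\rho,\mu}\res_{\mu,\rho}$ is multiplication by the nonzero scalar $\C(\tr_{\rho,\mu}1_{\He_q(\rho)})$ on $\Ext^\bullet_{\He_q(\mu)}(\C,\C)$; writing $T_w\alpha T_w^\vee$ as a scalar multiple of $\tr_{\rho,\mu}\res_{\mu,\rho}(T_w\alpha T_w^\vee)$ and applying (c) and (e) turns such a term into an instance of (h) for the strictly smaller pair $(\rho,\lambda)$. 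There remains the case where $\mu$ is $l$-parabolic, so $\He_q(\mu)$ is a tensor power of copies of $\He_q(l)$; here I would settle the relative Weyl group contributions by hand, using the explicit description of $\HH^\bullet(\He_q(l),\C)$ from Benson--Erdmann--Mikaelian \cite{BEM}, the invariance of $\tr_{\mu,\lambda}$ under $N_{\Sigma_\lambda}(\Sigma_\mu)/\Sigma_\mu$, and the characteristic-zero hypothesis (which lets one average over this finite group). Making the conjugations $\gamma\mapsto T_w\gamma T_w^\vee$ and their compatibility with the Yoneda product precise enough to justify this last step is, I expect, the most delicate part of the whole proposition.
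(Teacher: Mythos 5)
Your treatment of parts (a)--(g) matches the paper's: (a), (c), (d) are the cochain-level slides $T_w^\vee\alpha = \alpha T_w^\vee$, $T_w\beta=\beta T_w$ after representing classes by $\He_q(\lambda)$-linear cocycles via dimension shifting; (e) is the nesting of distinguished coset systems with the additivity of lengths; (f) is \cref{thm:mackey} applied to $\Hom_\C(P_\bullet, M_2)$; (g) is (d)+(f)+(c)+(e). For (b) the paper simply cites \cite[Theorem~2.7]{Du}; your reformulation via relative projectivity of the regular bimodule is an equivalent repackaging of the same result. The extra scaffolding you supply (freeness of $\He_q(\lambda)$ over $\He_q(\mu)$ via the symmetrizing trace, so that one resolution serves all subalgebras) is sound and makes the ``the usual arguments transfer'' step explicit.

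Where you genuinely diverge is (h), and here you have caught a real gap in the paper's argument. The paper's inductive step asserts that for every nontrivial distinguished $w\in\Sigma_\mu\backslash\Sigma_\lambda/\Sigma_\mu$, ``since there are no non-trivial normal Young subgroup of $\Sigma_\mu$, $\mu\cap\mu^w$ is a proper subcomposition of $\mu$.'' This is false in general: $\mu\cap\mu^w=\mu$ precisely when $w$ normalizes $\Sigma_\mu$, and such $w\ne 1$ among distinguished representatives exist whenever the relative Weyl group $N_{\Sigma_\lambda}(\Sigma_\mu)/\Sigma_\mu$ is nontrivial, i.e.\ whenever some part of $\lambda$ contains two equal parts of $\mu$ (e.g.\ $\lambda=(4)$, $\mu=(2,2)$, $w=(13)(24)$). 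The cited fact about normal Young subgroups of $\Sigma_\mu$ is not what is needed; the relevant object is the normalizer of $\Sigma_\mu$ in $\Sigma_\lambda$. For those terms the induction, as the paper runs it, is circular. You are right to isolate them. Your proposed repair is in two stages. The first (reduce a non-$l$-parabolic $\mu$ to a maximal $l$-parabolic $\rho\vDash\mu$ using (a), (b), (c), (e), and the fact that $\tr_{\rho,\mu}\res_{\mu,\rho}$ acts on $\Ext^\bullet_{\He_q(\mu)}(\C,\C)$ by a nonzero scalar, so the term becomes an instance of (h) for the strictly smaller $\rho$) is correct and is a genuine improvement on the paper. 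The second stage — $\mu$ itself $l$-parabolic with nontrivial relative Weyl group — is where you stop short: invoking \cite{BEM}, invariance under $N_{\Sigma_\lambda}(\Sigma_\mu)/\Sigma_\mu$, and a characteristic-zero averaging is a reasonable plan, but you do not carry it out, and it is exactly here that the actual content of (h) lies (the natural move is to average $\alpha$ over the relative Weyl group before comparing with $\tr_{\mu,\lambda}(\beta)\circ\tr_{\mu,\lambda}(\alpha)$, so the problematic terms are absorbed into a single $\tr_{\mu,\lambda}(\beta\circ\overline{\alpha})$ with $\overline{\alpha}$ invariant, after which the scalar $|N_{\Sigma_\lambda}(\Sigma_\mu)/\Sigma_\mu|$ can be divided out). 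So: parts (a)--(g) are fine and track the paper; part (b) is correct but rephrased; for part (h) you have correctly diagnosed a gap in the published argument, but your fix is not yet a proof.
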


\begin{proof} (a) The morphism $\alpha \in \Ext^\bullet_{\He_q(\lambda)}(M_1, M_2)$ can be viewed as an element in $\Hom_{\He_q(\lambda)}(M_1', M_2)$ for some $\He_q(\lambda)$-module $M_1'$ via dimension shifting. 
So, for every $w \in \Sigma_\lambda$, $\alpha T_w^\vee = T_w^\vee \alpha$. Therefore,
\[ \tr_{\mu, \lambda} \res_{\lambda, \mu}(\alpha) = \sum_{w \in \Sigma_\lambda / \Sigma_\mu} T_w \alpha T_w^\vee = \sum_{w \in \Sigma_\lambda / \Sigma_\mu} (T_w T_w^\vee) \alpha = (\tr_{\mu, \lambda} 1_{\He_q(\mu)}) \alpha. \]
\vskip .25cm 
\noindent
(b) See \cite[Theorem 2.7]{Du}.
\vskip .25cm 
\noindent
(c) Using the same arguments in (a), we can show that for every $w \in \Sigma_\lambda$, $\beta T_w = T_w \beta$. So,
\[ \beta \circ \tr_{\mu, \lambda}(\alpha) = \beta \circ \sum_{w \in \Sigma_\lambda / \Sigma_\mu} T_w \alpha T_w^\vee = \sum_{w \in \Sigma_\lambda / \Sigma_\mu} T_w (\res_{\lambda, \mu}(\beta) \circ \alpha) T_w^\vee = \tr_{\mu, \lambda}(\res_{\lambda, \mu}(\beta) \circ \alpha). \]
\vskip .25cm 
\noindent
(d) The result follows from similar arguments in (c).
\vskip .25cm 
\noindent
(e) For every $w_1 \in \Sigma_\lambda / \Sigma_\mu$ and $w_2 \in \Sigma_\mu / \Sigma_\nu$, it is clear that $w_1 w_2 \in \Sigma_\lambda / \Sigma_\nu$. So,
\[ \tr_{\mu, \lambda} \tr_{\nu, \mu}(\beta) = \sum_{w_1 \in \Sigma_\lambda / \Sigma_\mu} \sum_{w_2 \in \Sigma_\lambda / \Sigma_\nu} T_{w_1 w_2} \beta T_{w_1 w_2}^\vee = \sum_{w_1 \in \Sigma_\lambda / \Sigma_\nu} T_w \beta T_w^\vee = \tr_{\nu, \lambda}(\beta). \]
\vskip .25cm 
\noindent
(f) Viewing $\alpha$ as an element in $\Hom_{\He(\mu)}(M_1', M_2)$, the result follows from applying Theorem \ref{thm:mackey} to $\Hom_{\He(\mu)}(M_1', M_2)$.
\vskip .25cm 
\noindent
(g) We calculate left hand side of (g) as follow:
\begin{align*}
\tr_{\nu, \lambda}(\beta) \circ \tr_{\mu, \lambda}(\alpha) = & \tr_{\nu, \lambda}(\beta \circ \res_{\lambda, \nu} \tr_{\mu, \lambda}(\alpha)) & \text{using(d)} \\
= & \tr_{\nu, \lambda}\left(\beta \circ \left(\sum_{w \in \Sigma_\nu \backslash \Sigma_\lambda / \Sigma_\mu} \tr_{\nu \cap \mu^w, \nu} (T_w \alpha T_w^\vee)\right)\right) & \text{using (f)} \\
= & \sum_{w \in \Sigma_\nu \backslash \Sigma_\lambda / \Sigma_\mu} \tr_{\nu, \lambda} \tr_{\nu \cap \mu^w, \nu} (\beta \circ (T_w \alpha T_w^\vee)) & \text{using (c)} \\
= & \sum_{w \in \Sigma_\nu \backslash \Sigma_\lambda / \Sigma_\mu} \tr_{\nu \cap \mu^w, \lambda} (\beta \circ (T_w \alpha T_w^\vee)). & \text{using (e)}
\end{align*}
Similarly, using (c), (f), (d) and (e), we get the other identity.
\vskip .25cm 
\noindent
(h) We apply induction on $\mu$ with respect to the partial order $\vDash$. In the case that $\mu$ is the trivial composition $(1, 1, \dots)$, $\Ext^\bullet_{\He_q(\mu)}(\C, \C) \cong \C$, hence the result holds trivially. Suppose that $\mu$ is nontrivial and the result holds for all proper subcomposition of $\mu$. For each $w \in \Sigma_\mu \backslash \Sigma_\lambda / \Sigma_\mu$, since there are no non-trivial normal Young subgroup of $\Sigma_\mu$, $\mu \cap \mu^w$ is a proper subcomposition of $\mu$, hence by induction hypothesis, there exists $\alpha_{w, \nu} \in \Ext^\bullet_{\He_q(\mu \cap \mu^w)}(\C, \C)$ such that
\[ \tr_{\mu \cap \mu^w, \lambda}(\beta \circ (T_w \alpha T_w^\vee))) = \sum_{\nu \vDash \mu \cap \mu^w} \tr_{\nu, \lambda}(\beta) \circ \alpha_{w, \nu}. \]
Then,
\begin{align*}
\tr_{\mu, \lambda}(\beta \circ \alpha) = & \tr_{\mu, \lambda}(\beta) \circ \tr_{\mu, \lambda}(\alpha) - \sum_{w \in \Sigma_\mu \backslash \Sigma_\lambda / \Sigma_\mu \atop w \neq 1} \tr_{\mu \cap \mu^w, \lambda} (\beta \circ (T_w \alpha T_w^\vee)) & \text{using (g)} \\
= & \tr_{\mu, \lambda}(\beta) \circ \tr_{\mu, \lambda}(\alpha) -  \sum_{w \in \Sigma_\mu \backslash \Sigma_\lambda / \Sigma_\mu \atop w \neq 1} \sum_{\nu \vDash \mu \cap \mu^w} \tr_{\nu, \lambda}(\beta) \circ \alpha_{w, \nu} & \text{by induction} \\
= & \tr_{\mu, \lambda}(\beta) \circ \tr_{\mu, \lambda}(\alpha) - \sum_{\nu \vDash \mu} \tr_{\nu, \lambda}(\beta) \circ \left(\sum_{w \in \Sigma_\mu \backslash \Sigma_\lambda / \Sigma_\mu \atop w \neq 1} \alpha_{w, \nu}\right). \qedhere
\end{align*}
\end{proof}

\subsection{}

Let $\mu, \nu \vDash \lambda$ be two subcompositions. For $w \in \Sigma_\nu \backslash \Sigma_\lambda / \Sigma_\mu$ such that $\Sigma_\mu^w = \Sigma_\nu$, let
\[ T_w \cdot \alpha := T_w \alpha T_w^{-1}. \]
This conjugation by $T_w$ gives a map $\Ext^\bullet_{\He_q(\mu)}(M, N) \to \Ext^\bullet_{\He_q(\nu)}(M, N)$ for two $\He_q(\lambda)$-modules $M$ and $N$.

\begin{prop} \label{lem:Desk}
For every $\alpha \in \Ext^\bullet_{\He_q(\lambda)}(M, N)$,
\[ T_w \cdot \res_{\lambda, \mu}(\alpha) = \res_{\lambda, \nu}(\alpha). \]
\end{prop}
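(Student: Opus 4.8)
The plan is to unwind the definition of $T_w\cdot(-)$ and observe that restricting an $\He_q(\lambda)$-extension class and then conjugating by $T_w$ has no effect, because a $\lambda$-class already commutes with all of $\He_q(\lambda)$. First I would, as in the proof of Proposition~\ref{prop:transfer}(a), use dimension shifting to replace $\alpha \in \Ext^\bullet_{\He_q(\lambda)}(M,N)$ by an honest bimodule map $\alpha \in \Hom_{\He_q(\lambda)}(M',N)$ for a suitable $\He_q(\lambda)$-module $M'$; since $T_w$ is invertible in $\He_q(\lambda)$ (it is a basis element), $\alpha$ commutes with $T_w$ and $T_w^{-1}$ inside this Hom space, so $T_w\alpha T_w^{-1}=\alpha$ as maps of $\C$-vector spaces.

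The point that needs a little care is bookkeeping about which algebra each side is an extension group over. The left-hand side $T_w\cdot\res_{\lambda,\mu}(\alpha)$ first views $\alpha$ as a class over $\He_q(\mu)$ (by restricting the $\He_q(\lambda)$-module structures on $M',N$), and then conjugation by $T_w$ turns it into a class over $\He_q(\mu)^w=\He_q(\nu)$ using the hypothesis $\Sigma_\mu^w=\Sigma_\nu$. So I would spell out that the conjugation map $\Ext^\bullet_{\He_q(\mu)}(M,N)\to \Ext^\bullet_{\He_q(\nu)}(M,N)$ is induced by the pair (the algebra isomorphism $h\mapsto T_w^{-1}hT_w$ from $\He_q(\nu)$ to $\He_q(\mu)$, together with the $\C$-linear maps $m\mapsto T_w^{-1}m$, $n\mapsto T_w^{-1}n$ on $M$ and $N$, which are well defined because $M,N$ are $\He_q(\lambda)$-modules); and that the restriction $\res_{\lambda,\nu}$ is induced similarly by the inclusion $\He_q(\nu)\hookrightarrow\He_q(\lambda)$ and the identity maps on $M,N$. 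The identity to prove is then the commutativity of the resulting triangle of functors on $\Ext^\bullet$, which reduces, via the Hom-description above, to the elementary identity $T_w^{-1}\bigl(\alpha(T_w m)\bigr)=\alpha(m)$ for all $m\in M'$ — exactly the statement that $\alpha$ is $\He_q(\lambda)$-linear.

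Concretely I would argue: for $m\in M'$,
\[
\bigl(T_w\cdot\res_{\lambda,\mu}(\alpha)\bigr)(m) = T_w\,\alpha(T_w^{-1} m) = \alpha(T_w T_w^{-1} m) = \alpha(m) = \res_{\lambda,\nu}(\alpha)(m),
\]
where the second equality is $\He_q(\lambda)$-linearity of $\alpha$ (valid since $T_w,T_w^{-1}\in\He_q(\lambda)$), and then conclude by naturality of dimension shifting that the same equality holds for all $\alpha\in\Ext^\bullet_{\He_q(\lambda)}(M,N)$, not merely in $\Hom$.

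The main obstacle is not any real mathematical difficulty — the computation is a one-liner — but rather making the claim precise: one must be careful that $T_w\cdot\res_{\lambda,\mu}(\alpha)$ and $\res_{\lambda,\nu}(\alpha)$ genuinely live in the same group $\Ext^\bullet_{\He_q(\nu)}(M,N)$, and that the conjugation map is defined using the ambient $\He_q(\lambda)$-action on $M$ and $N$ (so that "$T_w^{-1}m$" makes sense) rather than on some intrinsic $\He_q(\mu)$-module. Once the identifications are pinned down, the proof is the displayed calculation together with an invocation of dimension shifting to pass from $\Hom$ to $\Ext^\bullet$, exactly as in Proposition~\ref{prop:transfer}(a).
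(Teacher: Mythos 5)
Your proof is correct and follows essentially the same route as the paper: both hinge on the observation that $\alpha$, being $\He_q(\lambda)$-linear, commutes with $T_w$ and $T_w^{-1}$, so conjugation by $T_w$ is the identity on the underlying map while repackaging a $\mu$-class as a $\nu$-class. One small slip: $T_w$ is invertible not because it is a basis element (basis elements of an algebra need not be units) but because $T_s$ is invertible from the quadratic relation and $T_w$ is a product of $T_s$'s; your conclusion is right, just not for the parenthetical reason.
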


\begin{proof}
Since $T_w \in \He_q(\lambda)$, $T_w$ commutes with $\alpha$, so
\[ T_w \cdot \res_{\lambda, \mu}(\alpha) = T_w \res_{\lambda, \mu}(\alpha) T_w^{-1} = T_w T_w^{-1} \res_{\lambda, \nu}(\alpha) = \res_{\lambda, \nu}(\alpha). \qedhere \]
\end{proof}


\section{Cohomology and Support Varieties}

\subsection{\bf Restriction map}

For each composition $\lambda$, let $R_\lambda := \Ext^\bullet_{\He_q(\lambda)}(\C, \C)$ be the cohomology ring under Yoneda product. For a natural number $d$, set $R_d := R_{(d)}$ for the cohomology ring with respect to the trivial partition $(d)$. 
Given a simple $l$-parabolic subcomposition $\nu$ of $\lambda$, $R_\nu \cong R_l$ and
\[ R_\nu = \begin{cases}
\C[x_\nu] \otimes \Lambda[y_\nu], & l > 2, \\
\C[y_\nu], & l = 2, \\
\end{cases} \]
for some $x_\nu$ and $y_\nu$ such that $\deg x_\nu = 2 l - 2$ and $\deg y_\nu = 2 l - 3$. Set $x_\nu := y_\nu^2$ when $l = 2$. The ring $R_\nu$ has a reduced commutative subring
\[ \widetilde{R}_\nu := \C[x_\nu]. \]
According to Lemma \ref{lem:Desk}, we could choose $x_\nu$ and $y_\nu$ for all simple $l$-parabolic $\nu \vDash \lambda$ compatibly such that $T_w \cdot x_\nu = x_{w \nu}$ and $T_w \cdot y_\nu = y_{w \nu}$ for where $w$ is the double coset representative in $\Sigma_\nu \backslash \Sigma_\lambda / \Sigma_\nu$ and $w \nu \vDash \lambda$ is the unique simple $l$-parabolic subcomposition such that $\Sigma_\nu^w = \Sigma_{w \nu}$.

\begin{theorem} \label{thm:restriction}
Let $\lambda = (\lambda_1, \dots, \lambda_m) \vDash n$ be a composition and set $\lambda / l := (\lfloor \lambda_1 / l \rfloor, \dots, \lfloor \lambda_m / l \rfloor)$. Let $\mu \vDash \lambda$ be a maximal $l$-parabolic subcomposition. The following statements hold.
\begin{itemize}
\item[(a)] The restriction map $\res_{\lambda, \mu}$ induces an isomorphism
\[ \res_{\lambda, \mu}: R_\lambda \xrightarrow{\sim} \left( R_{\nu_1} \otimes \dots \otimes R_{\nu_{|\lambda / l \rfloor|}} \right)^{\Sigma_{|\lambda / l|}} \]
where $\nu_1, \dots, \nu_{\lfloor n / l \rfloor}$ are all simple $l$-parabolic subcomposition of $\mu$. Moreover, the induced isomorphism
\[ \res_{\lambda, \mu}: R_\lambda \xrightarrow{\sim} \left( R_l^{\otimes |\lambda / l |} \right)^{\Sigma_{\lambda / l}} \]
is independent of the choice of $\mu$.
\item[(b)] Under the isomorphism above,
\[ \res_{n, \lambda}: \left( R_l^{\otimes \lfloor n / l \rfloor} \right)^{\Sigma_{\lfloor n / l \rfloor}} \to \left( R_l^{\otimes |\lambda / l |} \right)^{\Sigma_{\lambda / l}} \]
is the restriction of the projection map $R_l^{\otimes \lfloor n / l \rfloor} \to R_l^{\otimes {|\lambda / l|}}$.
\end{itemize}
\end{theorem}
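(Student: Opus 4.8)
The plan is to reduce everything to the Benson--Erdmann--Mikaelian computation of $R_d = \HH^{\bullet}(\He_q(d),\C)$ together with the transfer formalism of Proposition~\ref{prop:transfer}. First I would treat the case $\lambda = (n)$, i.e.\ analyze $\res_{n,\mu}$ for $\mu$ a maximal $l$-parabolic subcomposition of $(n)$, so $\mu$ consists of $\lfloor n/l\rfloor$ parts equal to $l$ and the rest equal to $1$. Since $\He_q(1) = \C$ has trivial cohomology, $R_\mu \cong R_{\nu_1}\otimes\cdots\otimes R_{\nu_{\lfloor n/l\rfloor}} \cong R_l^{\otimes\lfloor n/l\rfloor}$ by the K\"unneth formula for the product algebra $\He_q(\mu) \cong \He_q(l)^{\times\lfloor n/l\rfloor}\times\C^{\times(n-l\lfloor n/l\rfloor)}$. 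The image of $\res_{n,\mu}$ must land in the $\Sigma_{\lfloor n/l\rfloor}$-invariants: for $w$ a double coset representative in $\Sigma_\mu\backslash\Sigma_n/\Sigma_\mu$ permuting the blocks of size $l$, Proposition~\ref{lem:Desk} gives $T_w\cdot\res_{n,\mu}(\alpha) = \res_{n,\mu}(\alpha)$, and by the compatible choice of generators $x_\nu, y_\nu$ above, $T_w$ acts by permuting the tensor factors; hence $\res_{n,\mu}(\alpha)$ is $\Sigma_{\lfloor n/l\rfloor}$-invariant.

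Next I would show $\res_{n,\mu}$ is injective and surjective onto the invariants. Injectivity follows from Proposition~\ref{prop:transfer}(a),(b): since $\mu$ is a maximal $l$-parabolic subcomposition of $(n)$, $\tr_{\mu,n}1_{\He_q(\mu)}$ is invertible in $\He_q(n)$, so $\tr_{\mu,n}\res_{n,\mu}(\alpha) = (\tr_{\mu,n}1)\alpha$ is (up to a unit) $\alpha$ itself; thus $\res_{n,\mu}$ is split injective. For surjectivity onto the invariants, I would compare graded dimensions on both sides using the explicit form of $R_l$ displayed before the theorem: $R_l = \C[x]\otimes\Lambda[y]$ (or $\C[y]$ when $l=2$) with $\deg x = 2l-2$, $\deg y = 2l-3$. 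The Benson--Erdmann--Mikaelian description of $R_n$ identifies it precisely with $\bigl(R_l^{\otimes\lfloor n/l\rfloor}\bigr)^{\Sigma_{\lfloor n/l\rfloor}}$ as a graded ring; matching Poincar\'e series with the split injection already in hand forces $\res_{n,\mu}$ to be the claimed isomorphism. Independence of the choice of $\mu$ is then automatic: any two maximal $l$-parabolic $\mu,\mu'$ are conjugate in $\Sigma_n$, and Proposition~\ref{lem:Desk} identifies the two restriction maps after transport by the conjugating element, which acts trivially on the $\Sigma_{\lfloor n/l\rfloor}$-invariants by the compatibility of the chosen generators.

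For a general composition $\lambda = (\lambda_1,\dots,\lambda_m)$, I would apply the $(n)$-case factor by factor: $\He_q(\lambda)\cong \He_q(\lambda_1)\times\cdots\times\He_q(\lambda_m)$, so $R_\lambda \cong R_{\lambda_1}\otimes\cdots\otimes R_{\lambda_m}$ by K\"unneth, and a maximal $l$-parabolic $\mu\vDash\lambda$ is a disjoint union of maximal $l$-parabolic subcompositions $\mu^{(i)}\vDash(\lambda_i)$. Applying part~(a) in each factor gives $R_{\lambda_i}\xrightarrow{\sim}(R_l^{\otimes\lfloor\lambda_i/l\rfloor})^{\Sigma_{\lfloor\lambda_i/l\rfloor}}$, and tensoring yields $R_\lambda\xrightarrow{\sim}(R_l^{\otimes|\lambda/l|})^{\Sigma_{\lambda/l}}$, where $\Sigma_{\lambda/l} = \Sigma_{\lfloor\lambda_1/l\rfloor}\times\cdots\times\Sigma_{\lfloor\lambda_m/l\rfloor}$ is the Young subgroup of $\Sigma_{|\lambda/l|}$. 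Part~(b) is then a compatibility statement: $\res_{n,\lambda}$ factors as $R_n\xrightarrow{\res_{n,\mu}}R_\mu\xrightarrow{\res_{\mu,\lambda\cap\mu}}\cdots$ through a common maximal $l$-parabolic refinement, and since every restriction map on cohomology of a product algebra is, after K\"unneth, literally a tensor coordinate projection $R_l^{\otimes\lfloor n/l\rfloor}\to R_l^{\otimes|\lambda/l|}$ (forgetting the factors indexed by the $l$-blocks of $\mu$ not contained in a part of $\lambda$), the identification in part~(a) intertwines $\res_{n,\lambda}$ with that projection restricted to the invariants. The main obstacle I anticipate is pinning down precisely that the Benson--Erdmann--Mikaelian generators of $R_n$ can be chosen so that $\res_{n,\mu}$ sends them to the symmetrizations of the $x_\nu, y_\nu$ — i.e.\ matching our normalization of generators with theirs — and carefully bookkeeping the symmetric-group invariants when $n$ is not a multiple of $l$ (the leftover parts of size $1$ contribute nothing, but one must check the double coset combinatorics in Proposition~\ref{lem:Desk} genuinely realizes the full symmetric group $\Sigma_{\lfloor n/l\rfloor}$ on the $l$-blocks and not a smaller group).
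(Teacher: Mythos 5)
Your proposal is correct and lands on the same essential ingredients as the paper (K\"unneth reduction, the BEM computation, and Proposition~\ref{lem:Desk} for independence of $\mu$), but the route you take for part (a) is noticeably more explicit. The paper's proof of (a) is two sentences: it reduces by K\"unneth to $\lambda=(n)$, then simply \emph{cites} \cite{BEM} for the fact that $\res_{(n),\nu}$ is an isomorphism when $\nu$ is the canonical $l$-parabolic partition, and finishes with Proposition~\ref{lem:Desk} to handle a general conjugate $\mu$. You instead reconstruct that isomorphism from scratch: you verify the image lands in the $\Sigma_{\lfloor n/l\rfloor}$-invariants via Proposition~\ref{lem:Desk}, get injectivity from the split produced by the transfer argument in Proposition~\ref{prop:transfer}(a),(b) (note $\tr_{\mu,n}1$ is central, so it acts by a nonzero scalar on $\Ext^\bullet_{\He_q(n)}(\C,\C)$), and then close the gap by comparing Poincar\'e series against BEM's abstract description of $R_n$. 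This is a valid, and arguably more self-contained, argument since it separates the formal transfer content from the input you actually need from \cite{BEM} (just the graded vector space, not the statement about the restriction map itself); the trade-off is that it is longer and ultimately still relies on BEM for the dimension count. For part (b) your identification of the restriction map with a tensor-coordinate projection via K\"unneth is exactly what the paper does, modulo a small slip in the factorization you write down ($\res_{n,\lambda}$ should factor as $R_n\to R_\lambda\to R_{\mu'}$ with $\mu'$ a maximal $l$-parabolic refinement of $\lambda$, not through $R_\mu$ in the way your displayed composite suggests) -- the underlying idea is still right. Your flagged concern about the double-coset combinatorics realizing the full $\Sigma_{\lfloor n/l\rfloor}$ is the standard fact that $N_{\Sigma_n}(\Sigma_\mu)/\Sigma_\mu$ permutes equal-length blocks of $\mu$, which the paper implicitly leans on via \cite{BEM}.
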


\begin{proof}
(a) Since $\He_q(\lambda) \cong \bigotimes_{i = 1}^m \He_q(\lambda_i)$, by K\"unneth theorem, $R_{\lambda} \cong \bigotimes_{i = 1}^m R_{\lambda_i}$, hence it is enough to prove the result of $\res_{\lambda, \mu}$ for the trivial partition $\lambda = (n)$. Let $\nu \vdash n$ be an $l$-parabolic partition conjugate to $\mu$. The isomorphism induced by $\res_{\lambda, \nu}$ has been proved in \cite{BEM}. By Proposition~\ref{lem:Desk}, $\res_{\lambda, \mu}$ and $\res_{\lambda, \nu}$ induce the same isomorphism.

(b) Let $\nu \vdash n$ be an $l$-parabolic partition conjugate to $\mu$. Then, $R_\lambda \cong R_\mu^{\Sigma_{|\mu / l|}} \cong R_\nu^{\Sigma_{|\nu / l|}}$. So, it suffices to prove the result for partition $\lambda$ where every part is a multiple of $l$, and 
$\mu$ is a maximal $l$-parabolic partition. Since $\res_{(n), \mu} = \res_{\lambda, \nu} \circ \res_{(n), \lambda}$ and the restriction map $\res_{(n), \mu}$ is given by projection, the result follows.
\end{proof}

For a composition $\lambda$, we set
\[ \widetilde{R}_\lambda := \res_{\lambda, \mu}^{-1} \left( \widetilde{R}_{\nu_1} \otimes \dots \widetilde{R}_{\nu_{|\lambda/l|}} \right)^{\Sigma_{\lambda / l}}, \]
where $\nu_1, \dots, \nu_{|\lambda/l|}$ are all simple $l$-parabolic subcomposition of some maximal $l$-parabolic subcomposition $\mu$ of $\lambda$. By Theorem \ref{thm:restriction}, $\widetilde{R}_\lambda$ does not depend on the choice of $\mu$, and it is a commutative reduced subring of $R_\lambda$. Moreover, $R_\lambda$ is a finitely generated $\widetilde{R}_\lambda$-module.

\subsection{\bf Finite Generation:}

For the remainder of this paper, assume that all modules for the Hecke algebra $\He_q(\lambda)$ are finite-dimensional. Let $M \in \mod(\He_q(\lambda))$ and set $\operatorname{H}^\bullet(\He_q(\lambda), M) := \Ext^\bullet_{\He_q(\lambda)}(\C, M)$, which is an $R_\lambda$-module.

\begin{theorem} \label{thm:fg} Let $\lambda$ be a partition of $d$.
\begin{itemize}
\item[(a)] $R_{\lambda}$ is a finitely generated $\C$-algebra.
\item[(b)] If $M\in \operatorname{mod}(\He_q(\lambda))$ then $\operatorname{H}^{\bullet}(\He_q(\lambda), M)$ is a finitely generated $R_{\lambda}$-module.
\end{itemize}
\end{theorem}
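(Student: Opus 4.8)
The plan is to reduce Theorem~\ref{thm:fg} to the case of the trivial partition $\lambda=(d)$, where both statements follow from the work of Benson--Erdmann--Mikaelian \cite{BEM}, and then to bootstrap the general case using the transfer machinery of Proposition~\ref{prop:transfer} together with Theorem~\ref{thm:restriction}. First I would recall the K\"unneth reduction: since $\He_q(\lambda)\cong\bigotimes_{i=1}^m\He_q(\lambda_i)$ for $\lambda=(\lambda_1,\dots,\lambda_m)$, we get $R_\lambda\cong\bigotimes_i R_{\lambda_i}$ and, for $M=\bigotimes_i M_i$, $\operatorname{H}^\bullet(\He_q(\lambda),M)\cong\bigotimes_i\operatorname{H}^\bullet(\He_q(\lambda_i),M_i)$ as modules over $R_\lambda\cong\bigotimes_i R_{\lambda_i}$. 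For part (a) this immediately reduces finite generation of $R_\lambda$ as a $\C$-algebra to finite generation of each $R_{\lambda_i}=R_{(\lambda_i)}$, i.e.\ to the trivial-partition case. For part (b) one must be slightly more careful because a general $\He_q(\lambda)$-module $M$ is not a tensor product of modules over the factors; but it is a direct summand of a finite direct sum of such tensor products (e.g.\ of $\bigoplus_i\Res_{\He_q(\lambda_i)}(M)$ suitably induced up), and finitely generated modules over a Noetherian ring are closed under submodules and quotients, so it suffices to prove (b) for $M$ a module over a single $\He_q(d)$, again the trivial-partition case. (Once (a) is known, $R_\lambda$ is Noetherian, which is what makes these closure arguments legitimate.)

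Next I would establish the trivial-partition case $\lambda=(d)$. Here Theorem~\ref{thm:restriction}(a) identifies $R_d$ with the ring of $\Sigma_{\lfloor d/l\rfloor}$-invariants in $R_l^{\otimes\lfloor d/l\rfloor}$ via $\res_{d,\mu}$ for a maximal $l$-parabolic $\mu\vDash(d)$. Since $R_l$ is explicitly $\C[x_\nu]\otimes\Lambda[y_\nu]$ (or $\C[y_\nu]$ when $l=2$), it is a finitely generated $\C$-algebra, hence so is $R_l^{\otimes\lfloor d/l\rfloor}$, and this algebra is a finitely generated module over its commutative subalgebra $\widetilde R_l^{\otimes\lfloor d/l\rfloor}=\C[x_{\nu_1},\dots,x_{\nu_{\lfloor d/l\rfloor}}]$. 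By Noether's theorem on finiteness of invariants of a finite group acting on a finitely generated commutative algebra, the invariant subring $\bigl(\widetilde R_l^{\otimes\lfloor d/l\rfloor}\bigr)^{\Sigma_{\lfloor d/l\rfloor}}$ is a finitely generated $\C$-algebra, and $\bigl(R_l^{\otimes\lfloor d/l\rfloor}\bigr)^{\Sigma_{\lfloor d/l\rfloor}}$ is a finitely generated module over it; hence $R_d$ is a finitely generated $\C$-algebra, proving (a). For (b), one uses the transfer: by Proposition~\ref{prop:transfer}(b) the element $\tr_{\mu,d}1_{\He_q(\mu)}$ is invertible in $\He_q(d)$, and by part~(a) of that proposition $\tr_{\mu,d}\res_{d,\mu}$ acts on $\operatorname{H}^\bullet(\He_q(d),M)$ as multiplication by a unit. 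It follows that $\res_{d,\mu}:\operatorname{H}^\bullet(\He_q(d),M)\to\operatorname{H}^\bullet(\He_q(\mu),M)$ is split injective as a map of $R_d$-modules (where $R_d$ acts on the right via $\res_{d,\mu}:R_d\to R_\mu$). So it is enough to show $\operatorname{H}^\bullet(\He_q(\mu),M)$ is finitely generated over $R_\mu$ and that $R_\mu$ is finitely generated over $R_d$; the former is the $l$-parabolic case, handled by the K\"unneth reduction down to $\operatorname{H}^\bullet(\He_q(l),-)$ which is finite over $R_l$ by \cite{BEM} (a computation in a small algebra), and the latter is Theorem~\ref{thm:restriction}(a) combined with the Noether finiteness of $R_l^{\otimes\lfloor d/l\rfloor}$ as a module over its $\Sigma$-invariants.

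I expect the main obstacle to be the bookkeeping in part (b) when $M$ is an arbitrary $\He_q(d)$-module rather than one built from the small local pieces: one needs the transfer argument to push $\operatorname{H}^\bullet$ up from the $l$-parabolic subalgebra $\He_q(\mu)$, and there one must verify that the relevant restriction map is genuinely a split monomorphism of $R_d$-modules and that $\operatorname{H}^\bullet(\He_q(\mu),M)$ is finite over $R_\mu$ for the \emph{restricted} module $\Res_{\He_q(\mu)}M$, which uses the K\"unneth isomorphism at the level of a general module --- here one should present $\operatorname{H}^\bullet(\He_q(\mu),M)$ as a summand of $\operatorname{H}^\bullet(\He_q(\mu),\He_q(\mu)\otimes_{\He_q(\mu_0)}\Res M)$ for a suitable refinement, and reduce to $\He_q(l)$. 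The finite generation of $\Ext^\bullet_{\He_q(l)}(\C,N)$ over $R_l$ for arbitrary finite-dimensional $N$ is where one genuinely invokes \cite{BEM} (the cohomology of $\He_q(l)$ is completely computed there, and $\He_q(l)$ is a self-injective algebra of finite representation type when $l$ is small enough, or one cites the explicit minimal resolution of $\C$), so this is cited rather than reproved. The rest is formal: Noetherianity of $R_d$ (from (a)), closure of finitely generated modules under subquotients, and the compatibility of all the transfer/restriction maps with the $R$-module structures, all of which is supplied by Proposition~\ref{prop:transfer} and Theorem~\ref{thm:restriction}.
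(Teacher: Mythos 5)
Your strategy is essentially the one the paper uses: K\"unneth reduction to the one-row case, Theorem~\ref{thm:restriction} together with Noether finiteness for part (a), and the transfer machinery of Proposition~\ref{prop:transfer}(a)(b) to split the restriction to a maximal $l$-parabolic $\He_q(\mu)$ and reduce (b) to the $l$-parabolic (ultimately $\He_q(l)$) case.

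There is, however, a misstep in how you carry out the K\"unneth reduction for part (b). A general $\He_q(\lambda)$-module $M$ is \emph{not} a direct summand of a finite direct sum of outer tensor products $M_1\boxtimes\cdots\boxtimes M_m$; your parenthetical ``$\bigoplus_i\Res_{\He_q(\lambda_i)}M$ suitably induced up'' would require $M$ to be relatively $\He_q(\lambda_i)$-projective, which fails in general. The correct reduction (and the one the paper uses) is to observe that every simple $\He_q(\lambda)$-module \emph{is} an outer tensor product of simples over the factors, so K\"unneth applies to simples, and then to induct on composition length using the long exact sequence in cohomology. This requires that finitely generated modules over the Noetherian ring $R_\lambda$ are closed under \emph{extensions} (not merely submodules and quotients, as you write); that closure property is what makes the filtration argument go through. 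The same correction applies at the $\mu\rightsquigarrow(l)$ stage: your proposal to present $\HH^\bullet(\He_q(\mu),M)$ as a summand of some induced-up cohomology is not needed and is not justified --- reduce to simple $\He_q(\mu)$-modules via composition series instead.

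Two smaller points of divergence from the paper, both harmless. First, for the base case $\He_q(l)$ the paper cites the explicit minimal projective resolutions of \cite[5.1]{KN} to get finite generation of $\HH^\bullet(\He_q(l),S)$ for every simple $S$, rather than appealing directly to \cite{BEM}; your citation of \cite{BEM} for the trivial module is fine but you do need the statement for all simples (or a composition-series induction), so \cite{KN} is the cleaner reference. Second, where you assert that $\res_{d,\mu}$ is split injective as a map of $R_d$-modules, the $R_d$-linearity of the transfer is exactly Proposition~\ref{prop:transfer}(d) (or (c)), and the invertible central element of Proposition~\ref{prop:transfer}(a)(b) gives the splitting; you should cite (d) explicitly. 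The paper instead invokes Proposition~\ref{prop:transfer}(h) at this point to exhibit a finite generating set for the image of the transfer directly, which accomplishes the same thing without leaning on Noetherianity of $R_d$ for this step; your route via (d) plus Noetherianity is equally valid once (a) is established.
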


\begin{proof}
(a) We can conclude this statement by applying the K\"unneth theorem and \cite[Theorem 1.1]{BEM}.

(b) First consider the case when $\lambda=(l)$. Then one can directly prove using explicit projective resolutions for $\He_{q}(l)$ (cf. \cite[5.1]{KN})
that for any simple $\He_{q}(l)$-module, $S$, then $\operatorname{H}^{\bullet}(\He_q(l), S)$ is a finitely generated $R_{(l)}$-module. Now using 
induction on the composition length and the long exact sequence in cohomology, it follows the statement of (b) holds 
for a finitely generated $\He_{q}(l)$-module $M$. 

Next consider the case when $\lambda=(l^{a},1^{s})$. Any simple $\He_{q}(\lambda)$-module is an outer 
tensor product $S=S_{1}\boxtimes S_{2} \boxtimes \dots \boxtimes S_{a}\boxtimes {\mathbb C}^{s}$ By the K\"unneth theorem,
\[ \operatorname{H}^\bullet(\He_q(\lambda), S) \cong \operatorname{H}^\bullet(\He_q(l)), S_{1})\otimes 
\operatorname{H}^\bullet(\He_q(l), S_{2})\otimes \dots \otimes
\operatorname{H}^\bullet(\He_q(l), S_{a}) . \]
which is a finitely generated $R_{\lambda}$-module from the preceding paragraph. By an inductive argument on the composition length 
the statement holds for $R_{(l^{a}, 1^s)}$. 

Now consider the case when $\lambda=(d)$ and let $\mu=(l^{a},1^{s})$ be a maximal $l$-parabolic partition of $\lambda$. According to 
Proposition~\ref{prop:transfer}(a)(b), the restriction map $\text{res}_{\lambda, \mu} : \HH^\bullet(\He_q(\lambda), M) \to \HH^\bullet(\He_q(\mu), M)$ is injective. 
The codomain $\HH^\bullet(\He_q(\mu), M)$ is already shown to be finitely generated over $R_\mu$. Since $R_{\mu}$ is finitely generated as a 
$R_{\lambda}$-module, it follows that $\HH^\bullet(\He_q(\mu), M)$ is finitely generated over $R_{\lambda}$. Therefore, by Proposition~\ref{prop:transfer}(a) 
the transfer map splits restriction and one can apply Proposition~\ref{prop:transfer}(h) to show that $\HH^\bullet(\He_q(d), M)$ is finitely generated over 
$R_{(d)}$.

Finally, let $\lambda=(\lambda_{1},\lambda_{2},\dots,\lambda_{t})$ be a partition of $d$. Any simple $\He_{q}(\lambda)$-module is an outer 
tensor product $S=S_{1}\boxtimes S_{2} \boxtimes \dots \boxtimes S_{t}$ By the K\"unneth theorem and the fact that the 
finite generation statement holds for $\He(\lambda_{j})$ for $j=1,2,\dots, t$, one can conclude that 
$\operatorname{H}^{\bullet}(\He_q(\lambda), S)$ is a finitely generated $R_{\lambda}$-module. Again by an inductive argument one can now conclude the 
statement of part (b). 
\end{proof}

\subsection{\bf Support Theory:} \label{S:supporttheory} Set $W_{\lambda} := \operatorname{MaxSpec} \widetilde{R}_\lambda$. According to Theorem~\ref{thm:fg} the set $W_{\lambda}$ is an affine homogeneous variety. Given $M \in \text{mod}(\He_q(\lambda))$, define the  (relative) {\it support variety} $W_{\lambda}(M)$ as the variety of the annihilator ideal, $J_{\He_q(\lambda)}(\C, M)$, in $\widetilde{R}_\lambda$ for its action on $\HH^\bullet(\He_q(\lambda), M)$. These support varieties are closed, conical subvarieties of $W_{\lambda}$.

For each $\mu \vDash \lambda$, there exists a restriction map in cohomology $\res_{\lambda, \mu}^\ast : W_{\mu} \to W_{\lambda}$ which is induced by the inclusion of $\He_q(\mu) \leq \He_q(\lambda)$. We can now formulate a definition for the support varieties for modules for $\He_q(\lambda)$.

\begin{defn} \label{lem:vsupport} Let $M \in \mod(\He_q(\lambda))$. 
\begin{itemize} 
\item[(a)] The {\em support variety} of $M$ is defined as
\[ V_{\lambda}(M) := \bigcup_{\mu \vDash \lambda} \res_{\lambda, \mu}^\ast (W_{\mu}(M)). \]
\item[(b)] In the case when $\lambda=(d)$, 
\[ V_{\He_{q}(d)}(M) :=V_{(d)}(M)=\bigcup_{\mu \vDash (d)} \res_{(d), \mu}^\ast (W_{\mu}(M)). \]
\end{itemize} 
\end{defn}

By using the functoriality of the restriction map and the fact that the restriction maps are finite maps, one can state the following proposition. 

\begin{prop} \label{prop:Tavern}
Let $W$ be closed subvariety of $W_\nu$.
\begin{itemize}
\item[(a)] $\dim W = \dim \res_{\lambda, \nu}^\ast(W)$.
\item[(b)] $\res_{\lambda, \nu}^\ast = \res_{\lambda, \mu}^\ast \circ \res_{\mu, \nu}^\ast$.
\end{itemize}
\end{prop}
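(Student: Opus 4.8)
The plan is to derive both statements formally from two properties of the restriction maps $\res_{\lambda,\mu}\colon \widetilde{R}_\lambda\to\widetilde{R}_\mu$ that are already in place: their transitivity in cohomology, and the fact (recorded in \S\ref{S:supporttheory}, and ultimately a consequence of \cref{thm:restriction}) that each such map makes $\widetilde{R}_\mu$ a finitely generated module over $\widetilde{R}_\lambda$, equivalently that each $\res_{\lambda,\mu}^\ast\colon W_\mu\to W_\lambda$ is a finite morphism of affine varieties. Note that the inclusion $\res_{\lambda,\mu}(\widetilde{R}_\lambda)\subseteq\widetilde{R}_\mu$ is already implicit in \S\ref{S:supporttheory}, where the maps $\res_{\lambda,\mu}^\ast$ are defined. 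Since every ring occurring here is a finitely generated commutative $\C$-algebra and $\C$ is algebraically closed, $\operatorname{MaxSpec}$ is a contravariant functor from these algebras to affine varieties, carrying composites of homomorphisms to composites of morphisms in the opposite order; this is essentially the only categorical input for (b).

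For (b), I would first note that for the chain $\He_q(\nu)\le\He_q(\mu)\le\He_q(\lambda)$ restriction of extension groups is transitive, so $\res_{\lambda,\nu}=\res_{\mu,\nu}\circ\res_{\lambda,\mu}$ as ring homomorphisms $R_\lambda\to R_\mu\to R_\nu$. Because the reduced subrings $\widetilde{R}_\bullet$ have been chosen compatibly with restriction (this is built into \cref{thm:restriction}, via the description of these maps as coordinate projections on the $R_l^{\otimes\bullet}$-models together with the compatible choice of generators fixed before \cref{thm:restriction}), this identity restricts to $\res_{\lambda,\nu}=\res_{\mu,\nu}\circ\res_{\lambda,\mu}$ as maps $\widetilde{R}_\lambda\to\widetilde{R}_\mu\to\widetilde{R}_\nu$. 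Applying $\operatorname{MaxSpec}$ reverses the order and yields $\res_{\lambda,\nu}^\ast=\res_{\lambda,\mu}^\ast\circ\res_{\mu,\nu}^\ast$. There is no genuine obstacle here; (b) is pure functoriality.

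For (a), the essential input is that $\res_{\lambda,\nu}^\ast\colon W_\nu\to W_\lambda$ is a finite morphism. Granting this, the argument is standard: a finite morphism is closed with finite fibres, so the restriction $\res_{\lambda,\nu}^\ast|_W\colon W\to W_\lambda$ — being the composite of the closed immersion $W\hookrightarrow W_\nu$ with a finite morphism, hence again finite — has closed image $\res_{\lambda,\nu}^\ast(W)$ and induces a surjective finite morphism $W\to\res_{\lambda,\nu}^\ast(W)$; finite surjective morphisms of affine varieties preserve Krull dimension, so $\dim W=\dim\res_{\lambda,\nu}^\ast(W)$. Equivalently, $\dim\res_{\lambda,\nu}^\ast(W)\le\dim W$ for the image of any morphism, while the reverse inequality holds because the fibres are finite.

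The one point that is not mere bookkeeping is the finiteness of $\res_{\lambda,\nu}^\ast$, i.e.\ that $\widetilde{R}_\nu$ is module-finite over $\widetilde{R}_\lambda$ via $\res_{\lambda,\nu}$. I would obtain this from \cref{thm:restriction}: both $\widetilde{R}_\lambda$ and $\widetilde{R}_\nu$ are finite modules over the fully symmetric ring $\widetilde{R}_n\cong(\widetilde{R}_l^{\otimes\lfloor n/l\rfloor})^{\Sigma_{\lfloor n/l\rfloor}}$ via $\res_{n,\lambda}$ and $\res_{n,\nu}$ — because $\widetilde{R}_l^{\otimes c}$ is module-finite over any of its invariant subrings by Noether's finiteness theorem, $\widetilde{R}_\bullet$ sits between such invariant subrings, and the relevant modules are finitely generated since these rings are Noetherian — and since $\res_{n,\nu}=\res_{\lambda,\nu}\circ\res_{n,\lambda}$, module-finiteness of $\widetilde{R}_\nu$ over $\widetilde{R}_\lambda$ via $\res_{\lambda,\nu}$ follows. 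The care required here is only in making the reduction to \cref{thm:restriction} precise — handling the floors in $\lambda/l$ and $\nu/l$ and the passage through a maximal $l$-parabolic refinement — but no new idea is involved.
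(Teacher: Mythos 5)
Your proposal is correct and takes essentially the same route as the paper: the paper gives no detailed proof, only a one-line remark that the statement follows from functoriality of restriction together with the fact that the restriction maps $\res_{\lambda,\nu}^\ast$ are finite, and your argument is a careful expansion of precisely those two points. The only thing you add beyond the paper's sketch is an explicit justification of finiteness via \cref{thm:restriction} and Noether's finiteness theorem, which is the right way to fill that gap.
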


Next we present below several elementary properties of these support varieties. The proofs from \cite[\S 5.7]{Ben} can be used to verify these facts.

\begin{prop} \label{prop:Bracelet} Let $M_{j}\in \operatorname{mod}(\He_{q}(d))$ for $j=1,2,3$. Then 
\begin{itemize}
\item[(a)] Let $0\rightarrow M_{1}\rightarrow M_{2} \rightarrow M_{3}
\rightarrow 0$ be a short exact sequence  in $\operatorname{mod}(\He_q(\lambda))$. If $\Sigma_3$ is the symmetric group on three letters and
$\sigma\in \Sigma_3$ then
$$V_{\lambda}(M_{\sigma(1)}) \subseteq V_{\lambda}(M_{\sigma(2)}) \cup V_{\lambda}(M_{\sigma(3)}).$$
\item[(b)]  $V_{\lambda}(M_{1}\oplus M_{2}) = V_{\lambda}(M_1) \cup V_{\lambda}(M_2)$.
\end{itemize}
\end{prop}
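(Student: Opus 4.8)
The two statements are standard consequences of the way support varieties are built out of the annihilators of cohomology modules, so the plan is to reduce everything to the case of the ``local'' pieces $W_\mu(M)$ in $W_\mu$ for each $\mu \vDash (d)$, prove the analogous statements there, and then transport them along the finite maps $\res_{(d),\mu}^\ast$ using \cref{prop:Tavern}. For part (b) this is almost immediate: since $\HH^\bullet(\He_q(\mu), M_1 \oplus M_2) \cong \HH^\bullet(\He_q(\mu), M_1) \oplus \HH^\bullet(\He_q(\mu), M_2)$ as $\widetilde R_\mu$-modules, the annihilator ideal $J_{\He_q(\mu)}(\C, M_1 \oplus M_2)$ is the intersection $J_{\He_q(\mu)}(\C, M_1) \cap J_{\He_q(\mu)}(\C, M_2)$, and taking varieties turns this intersection of ideals into the union $W_\mu(M_1) \cup W_\mu(M_2)$; applying $\res_{(d),\mu}^\ast$ (which commutes with unions of sets) and taking the union over all $\mu \vDash (d)$ gives the claim. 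One inclusion ($\supseteq$) also follows formally from (a) applied to the split exact sequences, so it suffices to observe the reverse inclusion.

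For part (a), the key input is that a short exact sequence $0 \to M_1 \to M_2 \to M_3 \to 0$ in $\mod(\He_q(\mu))$ induces a long exact sequence in $\HH^\bullet(\He_q(\mu), -)$, and this long exact sequence is a sequence of $R_\mu$-modules (hence of $\widetilde R_\mu$-modules), with connecting maps that are $R_\mu$-linear. From the long exact sequence one reads off, for any $\sigma \in \Sigma_3$, that $\HH^\bullet(\He_q(\mu), M_{\sigma(1)})$ is, up to extensions, squeezed between submodules/quotients of $\HH^\bullet(\He_q(\mu), M_{\sigma(2)})$ and $\HH^\bullet(\He_q(\mu), M_{\sigma(3)})$; concretely, the product ideal $J_{\He_q(\mu)}(\C,M_{\sigma(2)}) \cdot J_{\He_q(\mu)}(\C,M_{\sigma(3)})$ annihilates $\HH^\bullet(\He_q(\mu), M_{\sigma(1)})$, so $W_\mu(M_{\sigma(1)}) \subseteq W_\mu(M_{\sigma(2)}) \cup W_\mu(M_{\sigma(3)})$. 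This is exactly the argument of \cite[\S5.7]{Ben}. Then one applies $\res_{(d),\mu}^\ast$ to both sides, uses that the image of a union is the union of the images, and takes the union over all $\mu \vDash (d)$ to conclude $V_{(d)}(M_{\sigma(1)}) \subseteq V_{(d)}(M_{\sigma(2)}) \cup V_{(d)}(M_{\sigma(3)})$.

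The main point requiring care — and the step I expect to be the real obstacle — is the passage between the ``local'' varieties $W_\mu(M)$ and the global variety $V_{(d)}(M)$, because $V_{(d)}(M)$ is by definition a union over all $\mu \vDash (d)$ of pushforwards $\res_{(d),\mu}^\ast(W_\mu(M))$, and one must check that the set-theoretic manipulations (intersection of annihilator ideals $\leftrightarrow$ union of varieties, and compatibility with the finite maps $\res_{(d),\mu}^\ast$) genuinely commute with taking this union. Here \cref{prop:Tavern}(b), giving functoriality $\res_{\lambda,\nu}^\ast = \res_{\lambda,\mu}^\ast \circ \res_{\mu,\nu}^\ast$, and \cref{prop:Tavern}(a), giving that these maps are finite (so closed, and preserving dimension), are what make the bookkeeping work. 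Since $\res_{(d),\mu}^\ast$ is a morphism of varieties, $\res_{(d),\mu}^\ast(A \cup B) = \res_{(d),\mu}^\ast(A) \cup \res_{(d),\mu}^\ast(B)$ for any subsets $A, B$, and finiteness guarantees images of closed sets are closed, so no subtlety about closures arises. With these facts in hand the reduction is routine, and the cited proofs from \cite[\S5.7]{Ben} apply verbatim at the local level.
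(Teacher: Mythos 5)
Your proposal is correct and matches the paper's (very terse) proof in spirit: the paper simply cites \cite[\S 5.7]{Ben}, and what you have written is precisely the content of that citation — the annihilator-ideal argument on each local piece $W_\mu(M)$ (product of annihilators for the long exact sequence, intersection of annihilators for the direct sum) followed by the routine transport along the finite maps $\res_{(d),\mu}^\ast$ and union over $\mu\vDash(d)$, exactly as Definition~\ref{lem:vsupport} and Proposition~\ref{prop:Tavern} demand.
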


The following proposition gives an simplification of the formulas given in Definition~\ref{lem:vsupport} via maximal $l$-parabolic subcompositions. 

\begin{prop} Let $\mu \vDash \lambda$ be a maximal $l$-parabolic subcomposition, and let $M \in \mod(\He_q(\lambda))$.
\begin{itemize}
\item[(a)] For every maximal $l$-parabolic subcomposition $\mu \vDash \lambda$, $W_\lambda(M) = \res_{\lambda, \mu}^\ast W_\mu(M)$.
\item[(b)]
\[ V_{\lambda}(M) = \bigcup_{\text{$l$-parabolic} \atop \mu \vDash \lambda} \res_{\lambda, \mu}^\ast (W_{\mu}(M)). \]
\item[(c)] For any maximal $l$-parabolic subcomposition $\mu \vDash \lambda$,
\[ V_\lambda(M) = \res_{\lambda, \mu}^\ast V_\mu(M). \]
\end{itemize}
\end{prop}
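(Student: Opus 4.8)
The plan is to reduce everything to the maximal $l$-parabolic case, where the computation is transparent, and then use the transitivity of restriction maps together with the finiteness of restriction maps on varieties.

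First I would prove (a). The inclusion $\res_{\lambda,\mu}^\ast W_\mu(M) \subseteq W_\lambda(M)$ is immediate from functoriality: the restriction map $\res_{\lambda,\mu}\colon \HH^\bullet(\He_q(\lambda),M)\to\HH^\bullet(\He_q(\mu),M)$ is a map of $\widetilde R_\lambda$-modules (where $\widetilde R_\mu$ is viewed as a $\widetilde R_\lambda$-algebra via $\res_{\lambda,\mu}$ on cohomology), so the annihilator ideal $J_{\He_q(\mu)}(\C,M)$ pulled back to $\widetilde R_\lambda$ contains $J_{\He_q(\lambda)}(\C,M)$. For the reverse inclusion, this is exactly where Proposition~\ref{prop:transfer}(a),(b) does the work: since $\mu$ is a \emph{maximal} $l$-parabolic subcomposition, $\tr_{\mu,\lambda}1_{\He_q(\mu)}$ is invertible, so by part (a) the composite $\tr_{\mu,\lambda}\res_{\lambda,\mu}$ is (up to this unit) the identity on $\HH^\bullet(\He_q(\lambda),M)$. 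Hence $\res_{\lambda,\mu}$ is split injective as a map of $\widetilde R_\lambda$-modules, so an element of $\widetilde R_\lambda$ annihilates $\HH^\bullet(\He_q(\lambda),M)$ as soon as it annihilates the image, i.e.\ as soon as it lies in the pullback of $J_{\He_q(\mu)}(\C,M)$. Translating to varieties, $W_\lambda(M)=\res_{\lambda,\mu}^\ast W_\mu(M)$. (One must check the identification of $\widetilde R_\lambda$ with a subring of $R_\lambda$ and that $\res_{\lambda,\mu}$ sends $\widetilde R_\lambda$ into $\widetilde R_\mu$, but this is built into Theorem~\ref{thm:restriction} and the definition of $\widetilde R_\lambda$.)

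Next, (b). The definition gives $V_\lambda(M)=\bigcup_{\nu\vDash\lambda}\res_{\lambda,\nu}^\ast W_\nu(M)$, with the union over all subcompositions. I want to throw away all $\nu$ that are not $l$-parabolic. Given an arbitrary $\nu\vDash\lambda$, pick a maximal $l$-parabolic subcomposition $\mu'$ of $\nu$; this is also an $l$-parabolic subcomposition of $\lambda$. By part (a) applied with $\nu$ in place of $\lambda$, we have $W_\nu(M)=\res_{\nu,\mu'}^\ast W_{\mu'}(M)$, and by Proposition~\ref{prop:Tavern}(b), $\res_{\lambda,\nu}^\ast\res_{\nu,\mu'}^\ast=\res_{\lambda,\mu'}^\ast$. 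Therefore $\res_{\lambda,\nu}^\ast W_\nu(M)=\res_{\lambda,\mu'}^\ast W_{\mu'}(M)$ is already accounted for by the $l$-parabolic subcomposition $\mu'$. Since every $l$-parabolic $\mu\vDash\lambda$ is itself a valid $\nu$ in the original union, the two unions agree.

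Finally, (c). Using (b), write $V_\lambda(M)=\bigcup_{\text{$l$-parabolic }\nu\vDash\lambda}\res_{\lambda,\nu}^\ast W_\nu(M)$. Now fix a maximal $l$-parabolic subcomposition $\mu$. Every $l$-parabolic $\nu\vDash\lambda$ is contained, after conjugating by a suitable $T_w$, in $\mu$; more precisely each simple $l$-parabolic subcomposition of $\nu$ is $\Sigma_\lambda$-conjugate to one inside $\mu$, and Proposition~\ref{lem:Desk} together with Theorem~\ref{thm:restriction} shows the corresponding restriction maps on $W$ agree, so $\res_{\lambda,\nu}^\ast W_\nu(M)\subseteq\res_{\lambda,\mu}^\ast W_\mu(M)$ by transitivity again. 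Conversely $\res_{\lambda,\mu}^\ast W_\mu(M)=\res_{\lambda,\mu}^\ast V_\mu(M)$ since for a maximal $l$-parabolic $\mu$, which has only trivial proper $l$-parabolic subcompositions beyond its own simple pieces, the union defining $V_\mu(M)$ collapses (applying (b) inside $\He_q(\mu)$, whose only maximal $l$-parabolic subcomposition is $\mu$ itself, gives $V_\mu(M)=W_\mu(M)$). Combining the two inclusions yields $V_\lambda(M)=\res_{\lambda,\mu}^\ast V_\mu(M)$.

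\textbf{Main obstacle.} I expect the delicate point to be the conjugation bookkeeping in parts (b) and (c): making precise that an arbitrary $l$-parabolic $\nu$ is, up to $\Sigma_\lambda$-conjugacy, contained in a fixed maximal $l$-parabolic $\mu$, and that conjugating by the relevant $T_w$ does not change the image in $W_\lambda$. This is where Proposition~\ref{lem:Desk} (the $T_w$-conjugation compatibility) and the choice-independence clause of Theorem~\ref{thm:restriction}(a) must be invoked carefully; the rest is formal manipulation of annihilator ideals and the finiteness (hence dimension-preserving, surjective-onto-image) property of the restriction maps from Proposition~\ref{prop:Tavern}.
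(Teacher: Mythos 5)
Your arguments for (a) and (b) follow the same route as the paper's proof: in (a) the key fact is that $\tr_{\mu,\lambda}\res_{\lambda,\mu}$ is a unit multiple of the identity on cohomology via Proposition~\ref{prop:transfer}(a),(b), and in (b) you shrink the union to $l$-parabolic subcompositions by applying (a) inside each $\nu\vDash\lambda$ and using transitivity from Proposition~\ref{prop:Tavern}(b). One small caveat in (a): the ``easy'' inclusion $\res_{\lambda,\mu}^\ast W_\mu(M)\subseteq W_\lambda(M)$ is not simply formal functoriality of annihilators, because the restriction map $\HH^\bullet(\He_q(\lambda),M)\to\HH^\bullet(\He_q(\mu),M)$ need not be surjective; without a coproduct there is no reason for $\res_{\lambda,\mu}(J_{\He_q(\lambda)}(\C,M))$ to annihilate the whole of $\HH^\bullet(\He_q(\mu),M)$. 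Both inclusions in (a) really do need the transfer splitting, which is what the paper's terse one-line proof is gesturing at.

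The proof of (c) has a genuine gap. You claim that for every $l$-parabolic $\nu\vDash\lambda$ one has $\res_{\lambda,\nu}^\ast W_\nu(M)\subseteq\res_{\lambda,\mu}^\ast W_\mu(M)$, and that consequently $V_\mu(M)=W_\mu(M)$ for a maximal $l$-parabolic $\mu$. After conjugating $\nu$ into $\mu$, this amounts to asserting $\res_{\mu,\nu'}^\ast W_{\nu'}(M)\subseteq W_\mu(M)$ for an arbitrary $l$-parabolic $\nu'\vDash\mu$. When $\nu'$ is \emph{not} a maximal $l$-parabolic subcomposition of $\mu$, $\tr_{\nu',\mu}1$ is not invertible, restriction is not split, and nothing in the paper's framework yields this inclusion (for group algebras the analogous fact uses the tensor structure, which is precisely what is missing here). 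The assertion $V_\mu(M)=W_\mu(M)$ is in the same boat: the definition of $V_\mu$ includes the terms $\res_{\mu,\nu'}^\ast W_{\nu'}(M)$ and there is no reason offered that they are absorbed into $W_\mu(M)$. The paper avoids all of this. Its argument for (c) is: apply (b) to $V_\lambda(M)$, conjugate every $l$-parabolic $\nu\vDash\lambda$ into $\mu$ (using Proposition~\ref{lem:Desk}), factor $\res_{\lambda,\nu'}^\ast=\res_{\lambda,\mu}^\ast\res_{\mu,\nu'}^\ast$ and pull $\res_{\lambda,\mu}^\ast$ out of the union, and then recognize what remains inside as exactly the expression in (b) for $V_\mu(M)$ --- not $W_\mu(M)$. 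In short: you should not try to collapse $V_\mu(M)$ to $W_\mu(M)$; apply (b) a second time, inside $\He_q(\mu)$, and you are done without the unproved inclusion.
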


\begin{proof}
\noindent (a) Consider the transfer map
\[ \tr_{\mu, \lambda} : J_{\He_q(\mu)}(\C, M) \to J_{\He_q(\lambda)}(\C, M) \]
and the restriction map
\[ \res_{\lambda, \mu} : J_{\He_q(\lambda)}(\C, M) \to J_{\He_q(\mu)}(\C, M) \]
According to \ref{prop:transfer}(a) and (b), $\tr_{\mu, \lambda} \circ \res_{\lambda, \mu} = a\ {\rm id}$ for some unit $a \in \He_q(\lambda)$ as an endomorphism of $J_{\He_q(\lambda)}(\C, M)$. One has $\res^\ast_{\lambda, \mu} \circ \tr^\ast_{\mu, \lambda} = {\rm id}$ as an endomorphism of $W_\mu(M)$, from which the result follows.

\noindent (b) For each $\mu \vDash \lambda$, let $\mu'$ be a maximal $l$-parabolic subcomposition of $\mu$. Therefore, by (a),
\begin{align*}
V_{\lambda}(M) = & \bigcup_{\mu \vDash \lambda} \res_{\lambda, \mu}^\ast (W_{\mu}(M)) \\
= & \bigcup_{\mu \vDash \lambda} \res_{\lambda, \mu}^\ast \res_{\mu, \mu'}^\ast (W_{\mu'}(M)) \\
= & \bigcup_{\text{$l$-parabolic} \atop \mu' \vDash \lambda} \res_{\lambda, \mu'}^\ast (W_{\mu'}(M)).
\end{align*}

\noindent (c) The result follows from (b) and the fact that every $l$-parabolic subcomposition of $\lambda$ is contained in a given maximal $l$-parabolic subcomposition up to conjugacy.
\end{proof}

\subsection{\bf Varieties and Induction:}

The following proposition states how relative support behave under induction. 

\begin{prop} \label{prop:induction} Let $\nu, \mu, \lambda$ be three compositions such that $\mu \vDash \lambda$ and $M \in \mod(\He_q(\mu))$.
\begin{itemize}
\item[(a)] $W_\lambda(M \uparrow_{\He_{q}(\mu)}^{\He_{q}(\lambda)}) = \res_{\lambda, \mu}^\ast W_\mu(M)$.
\item[(b)] $W_\nu(M \uparrow_{\He_{q}(\mu)}^{\He_{q}(\lambda)}) = \bigcup_{\alpha \vDash \mu} \bigcup_{w \in w_{\alpha, \mu, \nu}} T_w^\# \res_{\nu^w, \alpha}^\ast W_\alpha(M)$, where
\[ w_{\alpha, \mu, \nu} := \{w \in \Sigma_{\nu} \backslash \Sigma_\lambda / \Sigma_{\mu} : \alpha = \mu \cap \nu^w\}. \]
\item[(c)] $V_\lambda(M \uparrow_{\He_{q}(\mu)}^{\He_{q}(\lambda)}) = \res_{\lambda, \mu}^\ast V_\mu(M)$.
\end{itemize}
\end{prop}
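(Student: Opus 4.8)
The plan is to prove the three parts in order, deducing (c) from (a) and (b) together with the dimension statement in Proposition \ref{prop:Tavern}(a). For part (a), the key identity is that induction from $\He_q(\mu)$ to $\He_q(\lambda)$ is, at the level of $\Ext$, the same operation as transfer: for $M \in \mod(\He_q(\mu))$ and $N \in \mod(\He_q(\lambda))$ one has by Frobenius reciprocity $\Ext^\bullet_{\He_q(\lambda)}(M \uparrow, N) \cong \Ext^\bullet_{\He_q(\mu)}(M, N \downarrow)$, and tracing through this isomorphism the $R_\lambda$-action on the left corresponds (via $\res_{\lambda,\mu}$) to the $R_\mu$-action on the right. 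Concretely, I would compute the annihilator ideal $J_{\He_q(\lambda)}(\C, M\uparrow)$ in $\widetilde R_\lambda$: an element $r \in \widetilde R_\lambda$ kills $\HH^\bullet(\He_q(\lambda), M\uparrow) \cong \HH^\bullet(\He_q(\mu), M)$ precisely when $\res_{\lambda,\mu}(r)$ kills $\HH^\bullet(\He_q(\mu), M)$, using that $\res_{\lambda,\mu}$ on $\widetilde R$ is a finite injective map (Theorem \ref{thm:restriction} and Proposition \ref{prop:transfer}(a),(b)). Hence $J_{\He_q(\lambda)}(\C, M\uparrow) = \res_{\lambda,\mu}^{-1} J_{\He_q(\mu)}(\C, M)$, and passing to varieties gives $W_\lambda(M\uparrow) = \res_{\lambda,\mu}^\ast W_\mu(M)$.

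For part (b), the engine is the Mackey decomposition, Theorem \ref{thm:mackey}, applied to the bimodule $\Hom_\C(P_\bullet, M)$ computing $\HH^\bullet(\He_q(\nu), M\uparrow_{\He_q(\mu)}^{\He_q(\lambda)})$. Restricting the induced module to $\He_q(\nu)$ decomposes it as a direct sum over double cosets $w \in \Sigma_\nu \backslash \Sigma_\lambda / \Sigma_\mu$ of modules $(T_w \otimes_{\He_q(\lambda)} M)\uparrow_{\He_q(\nu \cap \mu^w)}^{\He_q(\nu)}$; by Proposition \ref{prop:Bracelet}(b) the support variety $W_\nu$ of a direct sum is the union of the supports of the summands. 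For the $w$-summand, one applies part (a) (with $\lambda$ replaced by $\nu$, $\mu$ replaced by $\nu \cap \mu^w =: \alpha^w$ where $\alpha = \mu \cap \nu^w$ after conjugating) to get $\res_{\nu, \nu\cap\mu^w}^\ast W_{\nu \cap \mu^w}(T_w \otimes M)$, and then identifies $T_w \otimes_{\He_q(\lambda)} M$ with the $T_w$-conjugate of $M$, whose support on the conjugate subalgebra is carried by $T_w^\#$ to $\res_{\nu^w, \alpha}^\ast W_\alpha(M)$. Reindexing the double cosets by the composition $\alpha = \mu \cap \nu^w$ produces exactly the stated formula with the inner union over $w \in w_{\alpha,\mu,\nu}$.

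For part (c), I would combine (a), (b), and the definition $V_\lambda(M\uparrow) = \bigcup_{\nu \vDash \lambda} \res_{\lambda,\nu}^\ast W_\nu(M\uparrow)$. Expanding each $W_\nu(M\uparrow)$ via (b) and using functoriality of restriction maps (Proposition \ref{prop:Tavern}(b)) to compose $\res_{\lambda,\nu}^\ast$ with the maps appearing in (b), one sees that every piece of $V_\lambda(M\uparrow)$ lands inside $\res_{\lambda,\mu}^\ast\bigl(\bigcup_{\alpha \vDash \mu}\res_{\mu,\alpha}^\ast W_\alpha(M)\bigr) = \res_{\lambda,\mu}^\ast V_\mu(M)$; conversely the $\nu = \lambda$ (or $\nu = \mu$) term already yields all of $\res_{\lambda,\mu}^\ast V_\mu(M)$ by part (a). The one subtlety to check is that the conjugation-by-$T_w^\#$ operations and the double-coset bookkeeping do not produce anything outside $\res_{\lambda,\mu}^\ast V_\mu(M)$: this follows because $\alpha = \mu \cap \nu^w$ is a subcomposition of a conjugate of $\mu$, so $W_\alpha(M)$ feeds into $V_\mu(M)$ after the appropriate conjugation, and conjugate subcompositions of $\mu$ give the same contribution to $V_\mu(M)$ (compatibility of the generators $x_\nu$ under $T_w$ as arranged before Theorem \ref{thm:restriction}).

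The main obstacle I expect is part (b): one must be careful that the Mackey summands $T_w \otimes_{\He_q(\lambda)} M$ really are the $T_w$-twists of $M$ viewed as $\He_q(\nu \cap \mu^w)$-modules, that the induced map on cohomology is $R$-linear in the way needed to apply part (a), and that the twist $M \rightsquigarrow T_w\cdot M$ on cohomology is implemented on support varieties by the Frobenius twist $T_w^\#$ together with $\res^\ast$ — this last point is exactly where Proposition \ref{lem:Desk} and the compatible choice of the classes $x_\nu, y_\nu$ are used. Parts (a) and (c) are then essentially formal consequences of the transfer formalism of Proposition \ref{prop:transfer} and the functoriality in Proposition \ref{prop:Tavern}.
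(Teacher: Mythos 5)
Your proposal follows essentially the same route as the paper: part (a) by Frobenius reciprocity $\HH^\bullet(\He_q(\lambda), M\uparrow) \cong \HH^\bullet(\He_q(\mu), M)$ (taking annihilators and using finiteness of $\res_{\lambda,\mu}$), part (b) by the Mackey decomposition of Theorem \ref{thm:mackey} combined with Proposition \ref{prop:Bracelet}(b), part (a) applied to each summand, and the twist $T_w \otimes -$ handled via Proposition \ref{lem:Desk}, and part (c) by expanding the definition of $V_\lambda$ and pushing (b) through the restriction maps using Proposition \ref{prop:Tavern}(b). One small logical slip in your sketch of (c): you assert that the $\nu = \lambda$ (or $\nu = \mu$) term already yields all of $\res_{\lambda,\mu}^\ast V_\mu(M)$, but by part (a) that term gives only $\res_{\lambda,\mu}^\ast W_\mu(M)$. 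To recover the full union $\bigcup_{\alpha\vDash\mu}\res_{\lambda,\alpha}^\ast W_\alpha(M)$ you need, for each $\alpha \vDash \mu$, the term $\nu = \alpha$ with $w = 1 \in w_{\alpha,\mu,\alpha}$ (since $\mu \cap \alpha^1 = \alpha$). The paper's own justification "for $\nu = \mu$, $1 \in w_{\alpha,\mu,\nu}$" has the same issue and should read $\nu = \alpha$, so this is a typo to be aware of rather than a sign your approach is off.
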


\begin{proof}
(a) This follows by applying Frobenius reciprocity: $\Ext^\bullet_{\He_q(\lambda)}(\C, M \uparrow_{\He_{q}(\mu)}^{\He_{q}(\lambda)}) \cong \Ext^\bullet_{\He_q(\mu)}(\C, M)$.

(b) The result follows from the following calculation.
\begin{align*}
W_\nu(M \uparrow_{\He_{q}(\mu)}^{\He_{q}(\lambda)}) &= W_\nu\left( \bigoplus_{w \in \Sigma_{\nu} \backslash \Sigma_\lambda / \Sigma_{\mu}} (T_w \otimes M) \uparrow_{\He_{q}(\mu^w \cap \nu)}^{\He_{q}(\nu)} \right) & \text{Theorem \ref{thm:mackey}} \\
& = \bigcup_{w \in \Sigma_{\nu} \backslash \Sigma_\lambda / \Sigma_{\mu}} W_\nu((T_w \otimes M) \uparrow_{\He_{q}(\mu^w \cap \nu)}^{\He_{q}(\nu)}) & \text{Proposition \ref{prop:Bracelet}(b)} \\
& = \bigcup_{w \in \Sigma_{\nu} \backslash \Sigma_\lambda / \Sigma_{\mu}} \res_{\nu, \mu^w \cap \nu}^\ast W_{\mu^w \cap \nu}(T_w \otimes M) \\
& = \bigcup_{w \in \Sigma_{\mu} \backslash \Sigma_\lambda / \Sigma_{\nu}} T_w^\# \res_{\nu^w, \mu \cap \nu^w}^\ast W_{\mu \cap \nu^w}(M) \\
& = \bigcup_{\alpha \vDash \mu} \bigcup_{w \in w_{\alpha, \mu, \nu}} T_w^\# \res_{\nu^w, \alpha}^\ast W_\alpha(M).
\end{align*}

(c) We proceed with the following calculation.
\begin{align*}
V_\lambda(M \uparrow_{\He_{q}(\mu)}^{\He_{q}(\lambda)}) & = \bigcup_{\nu \vDash \lambda} \res_{\lambda, \nu}^\ast W_\nu(M \uparrow_{\He_{q}(\mu)}^{\He_{q}(\lambda)})& \text{Definition \ref{lem:vsupport}} \\
& = \bigcup_{\nu \vDash \lambda} \res_{\lambda, \nu}^\ast \bigcup_{\alpha \vDash \mu} \bigcup_{w \in w_{\alpha, \mu, \nu}} T_w^\# \res_{\nu^w, \alpha}^\ast W_\alpha(M) & \text{by (a)} \\ 
& = \bigcup_{\nu \vDash \lambda} \bigcup_{\alpha \vDash \mu} \bigcup_{w \in w_{\alpha, \mu, \nu}} \res_{\lambda, \nu}^\ast T_w^\# \res_{\nu^w, \alpha}^\ast W_\alpha(M) \\
& = \bigcup_{\nu \vDash \lambda} \bigcup_{\alpha \vDash \mu} \bigcup_{w \in w_{\alpha, \mu, \nu}} T_w^\# \res_{\lambda^w, \nu^w}^\ast \res_{\nu^w, \alpha}^\ast W_{\alpha}(M) \\
& = \bigcup_{\nu \vDash \lambda} \bigcup_{\alpha \vDash \mu} \bigcup_{w \in w_{\alpha, \mu, \nu}} T_w^\# \res_{\lambda, \alpha}^\ast W_\alpha(M) & w \in \Sigma_\lambda \\
& = \bigcup_{\nu \vDash \lambda} \bigcup_{\alpha \vDash \mu} \bigcup_{w \in w_{\alpha, \mu, \nu}} \res_{\lambda, \alpha}^\ast W_\alpha(M) & T_w^\# \in \He_q(\lambda) \\
& = \bigcup_{\alpha \vDash \mu} \bigcup_{\nu \vDash \lambda \atop w \in w_{\alpha, \mu, \nu}} \res_{\lambda, \alpha}^\ast W_{\alpha}(M) \\
& = \bigcup_{\alpha \vDash \mu} \res_{\lambda, \alpha}^\ast W_{\alpha}(M) & \text{for $\nu = \mu$, $1 \in w_{\alpha, \mu, \nu}$}\\
& = \res_{\lambda, \mu}^\ast V_\mu(M). \qedhere
\end{align*}
\end{proof}

We end this section with a result that will useful for computing support varieties in the case when one has some information about the vertex of a module. In particular this will be applied 
in case of Young vertices. 

\begin{prop} \label{prop:go}
Let $\mu \vDash \lambda$. Suppose that $M$ is an $\He_q(\lambda)$-module and $N$ is an $\He_q(\mu)$-module such that $M \mid N \uparrow_{\He_{q}(\mu)}^{\He_{q}(\lambda)}$ and $N \mid M \downarrow_{\He_{q}(\mu)}$. Then, $V_\lambda(M) = \res_{\lambda, \mu}^\ast V_\mu(N)$.
\end{prop}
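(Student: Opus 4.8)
The plan is to exploit the two-sided divisibility hypothesis together with the induction formula in Proposition~\ref{prop:induction}(c) and the inclusion-exclusion property of support varieties from Proposition~\ref{prop:Bracelet}. First I would apply $V_\lambda(-)$ to the condition $M \mid N \uparrow_{\He_q(\mu)}^{\He_q(\lambda)}$. Since $M$ is a direct summand of $N \uparrow_{\He_q(\mu)}^{\He_q(\lambda)}$, Proposition~\ref{prop:Bracelet}(b) gives
\[ V_\lambda(M) \subseteq V_\lambda\big(N \uparrow_{\He_q(\mu)}^{\He_q(\lambda)}\big) = \res_{\lambda, \mu}^\ast V_\mu(N), \]
the last equality being Proposition~\ref{prop:induction}(c). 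This establishes one containment with essentially no work.

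For the reverse containment I would use the hypothesis $N \mid M \downarrow_{\He_q(\mu)}$. The natural tool here is the description of $V_\lambda(M)$ via maximal $l$-parabolic subcompositions and, crucially, the behaviour of the restriction map $\res_{\lambda,\mu}^\ast$ on support varieties. Writing $N$ as a summand of $M\downarrow_{\He_q(\mu)}$ and applying $V_\mu(-)$, Proposition~\ref{prop:Bracelet}(b) yields $V_\mu(N) \subseteq V_\mu(M\downarrow_{\He_q(\mu)})$. The key point is then that restriction of a $\He_q(\lambda)$-module to $\He_q(\mu)$ and applying $\res_{\lambda,\mu}^\ast$ recovers (at least contains) $V_\lambda(M)$: concretely, $\res_{\lambda,\mu}^\ast V_\mu(M\downarrow_{\He_q(\mu)}) \subseteq V_\lambda(M)$, which should follow from the functoriality of restriction in cohomology, the identification $\HH^\bullet(\He_q(\mu), M\downarrow) = \HH^\bullet(\He_q(\mu), M)$ as an $R_\mu$-module, and the commuting square relating the $\widetilde R_\mu$-action to the $\widetilde R_\lambda$-action through $\res_{\lambda,\mu}$. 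Combining, $\res_{\lambda,\mu}^\ast V_\mu(N) \subseteq \res_{\lambda,\mu}^\ast V_\mu(M\downarrow_{\He_q(\mu)}) \subseteq V_\lambda(M)$, giving the opposite inclusion and hence equality.

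I expect the main obstacle to be the inclusion $\res_{\lambda,\mu}^\ast V_\mu(M\downarrow_{\He_q(\mu)}) \subseteq V_\lambda(M)$, i.e., verifying carefully that the variety-of-annihilator construction is compatible with restriction in the direction needed. One must check that an element of $\widetilde R_\lambda$ annihilating $\HH^\bullet(\He_q(\lambda), M)$ has image in $\widetilde R_\mu$ annihilating $\HH^\bullet(\He_q(\mu), M\downarrow)$ — this is the easy direction and gives $\res_{\lambda,\mu}^\ast W_\mu(M\downarrow) \subseteq W_\lambda(M)$ after passing to $\operatorname{MaxSpec}$ — and then to pass to the $V$'s one takes unions over $l$-parabolic $\mu' \vDash \mu$ inside $\lambda$ using Proposition~\ref{prop:induction} again (or the maximal $l$-parabolic reformulation) together with transitivity $\res_{\lambda,\mu}^\ast\circ\res_{\mu,\mu'}^\ast = \res_{\lambda,\mu'}^\ast$ from Proposition~\ref{prop:Tavern}(b). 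Alternatively — and perhaps more cleanly — one can bypass this by noting that $M$ is a summand of $(M\downarrow_{\He_q(\mu)})\uparrow_{\He_q(\mu)}^{\He_q(\lambda)}$ whenever $M \mid N\uparrow$ and $N \mid M\downarrow$ (since then $M \mid N\uparrow \mid (M\downarrow)\uparrow$), so that $V_\lambda(M) \subseteq V_\lambda((M\downarrow)\uparrow) = \res_{\lambda,\mu}^\ast V_\mu(M\downarrow)$ by Proposition~\ref{prop:induction}(c), and symmetrically $N \mid M\downarrow \mid (N\uparrow)\downarrow$ forces $\res_{\lambda,\mu}^\ast V_\mu(N) \supseteq \res_{\lambda,\mu}^\ast V_\mu(M\downarrow) \supseteq V_\lambda(M)$; this keeps the whole argument inside the already-proven Propositions~\ref{prop:Bracelet} and~\ref{prop:induction}, which I would regard as the cleanest route.
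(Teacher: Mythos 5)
Your first containment---$V_\lambda(M)\subseteq V_\lambda(N\uparrow_{\He_q(\mu)}^{\He_q(\lambda)})=\res_{\lambda,\mu}^\ast V_\mu(N)$ via Proposition~\ref{prop:Bracelet}(b) and Proposition~\ref{prop:induction}(c)---is exactly the paper's argument. For the reverse containment, however, you have manufactured an obstacle that is not there. The inclusion $\res_{\lambda,\mu}^\ast V_\mu(M)\subseteq V_\lambda(M)$ is immediate from Definition~\ref{lem:vsupport} together with transitivity (Proposition~\ref{prop:Tavern}(b)): writing $V_\mu(M)=\bigcup_{\nu\vDash\mu}\res_{\mu,\nu}^\ast W_\nu(M)$ and applying $\res_{\lambda,\mu}^\ast$ yields $\bigcup_{\nu\vDash\mu}\res_{\lambda,\nu}^\ast W_\nu(M)$, which is a sub-union of the union defining $V_\lambda(M)$ because every $\nu\vDash\mu$ is also a subcomposition of $\lambda$. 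No annihilator comparison is required, and in fact the $W$-level statement you single out as ``the easy direction,'' namely $\res_{\lambda,\mu}^\ast W_\mu(M\downarrow)\subseteq W_\lambda(M)$, is not easy and is not available for arbitrary $\mu\vDash\lambda$: it would require $\res_{\lambda,\mu}\bigl(J_{\He_q(\lambda)}(\C,M)\bigr)\subseteq J_{\He_q(\mu)}(\C,M)$, and without a transfer splitting restriction (which the paper has only when $\mu$ is maximal $l$-parabolic, Proposition~\ref{prop:transfer}(a),(b)) the restriction map $\HH^\bullet(\He_q(\lambda),M)\to\HH^\bullet(\He_q(\mu),M)$ need not be surjective, so annihilation of the source does not force annihilation of the target. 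The definition of $V_\lambda(M)$ was engineered precisely to sidestep this. With that in hand the paper's proof is the one-liner $\res_{\lambda,\mu}^\ast V_\mu(N)\subseteq\res_{\lambda,\mu}^\ast V_\mu(M)\subseteq V_\lambda(M)$, the first inclusion from $N\mid M\downarrow$ and Proposition~\ref{prop:Bracelet}(b).

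Your proposed ``cleaner'' bypass contains a genuine directional error. From $N\mid M\downarrow_{\He_q(\mu)}$ Proposition~\ref{prop:Bracelet}(b) gives $V_\mu(N)\subseteq V_\mu(M\downarrow)$, hence $\res_{\lambda,\mu}^\ast V_\mu(N)\subseteq\res_{\lambda,\mu}^\ast V_\mu(M\downarrow)$---the opposite of the inclusion $\res_{\lambda,\mu}^\ast V_\mu(N)\supseteq\res_{\lambda,\mu}^\ast V_\mu(M\downarrow)$ that you assert. The further link $M\downarrow\mid(N\uparrow)\downarrow$ pushes the containments in the same (unhelpful) direction, $V_\mu(M\downarrow)\subseteq V_\mu((N\uparrow)\downarrow)$, and does not recover the needed $\supseteq$. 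So the ``symmetric'' chain does not establish $\res_{\lambda,\mu}^\ast V_\mu(N)\supseteq V_\lambda(M)$, and that route should be discarded in favour of the definitional inclusion described above.
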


\begin{proof}
Using Proposition \ref{prop:Bracelet}(b) and \ref{prop:induction}(c), we obtain
\[ V_\lambda(M) \subseteq V_\lambda(N \uparrow_{\He_{q}(\mu)}^{\He_{q}(\lambda)}) = \res_{\lambda, \mu}^\ast V_\mu(N). \]
It follows from the Definition \ref{lem:vsupport} that
\[ \res_{\lambda, \mu}^\ast V_\mu(N) \subseteq \res_{\lambda, \mu}^\ast V_\mu(M) \subseteq V_\lambda(M). \qedhere \]
\end{proof}


\section{Rates of Growth}

\subsection{\bf Complexity:} Let $\{d_n\}_{n\geq 0}$ be a sequence of non-negative integers.
The {\em rate of growth} $r(d_{\bullet})$ of this sequence is the smallest non-negative integer $c$  for which there
exists a positive real number $C$ such that
$d_{n}\leq C\cdot n^{c-1}$ for all $ n\geq 1$.
If no such $d$ exists, set $r(d_{\bullet}):=\infty$.

Alperin  \cite[\S 4]{A} first defined the notion of complexity of modules for finite groups. We can also state this for Hecke algebras.
Our goal will be to relate the complexity to the dimension of the support varieties defined in the previous section.

\begin{defn} Let $M \in {\rm mod}(\He_q(d))$ and let
$$\dots \rightarrow P_{2} \rightarrow P_{1} \rightarrow P_{0}
\rightarrow M \rightarrow
0$$
be the minimal projective resolution of $M$. The {\em complexity} $c_{\He_q(d)}(M)$ of $M$ is
defined as $r(\dim P_{\bullet})$.
\end{defn}

\subsection{} For Hecke algebras the conventional proofs to relate the dimension of the support variety to (i) the
rate of growth of certain extension groups and (ii) the complexity of the module do not work because of the absence of
the tensor product (i.e., a coproduct structure on  $\He_q(d)$).

We first prove that the complexity can still be interpreted as the rate of growth of certain $\Ext$-groups related to taking
the direct sum of simple, Specht, Young and permutation modules.

\begin{theorem} \label{thm:comparerates} Let $M\in \operatorname{mod}(\He_q(d))$. The following quantities are equal.
\label{equalrateofgrowth}
\begin{itemize}
\item[(a)] $c_{\Sigma_{d}}(M)$;
\item[(b)] $r(\Ext^{\bullet}_{\He_q(d)}(\oplus_{\lambda\in \Lambda^+_{\text{reg}}(d)} D^{\lambda},M))$;
\item[(c)] $r(\Ext^{\bullet}_{\He_q(d)}(\oplus_{\lambda\vdash d} S^{\lambda},M))$;
\item[(d)] $r(\Ext^{\bullet}_{\He_q(d)}(\oplus_{\lambda\vdash d} Y^{\lambda},M))$;
\item[(e)] $r(\Ext^{\bullet}_{\He_q(d)}(\oplus_{\lambda\vdash d} M^{\lambda},M))$.
\end{itemize}
\end{theorem}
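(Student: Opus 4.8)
The plan is to establish the chain of equalities by showing that each of the quantities in (b)--(e) equals $c_{\He_q(d)}(M)$, exploiting that all four families --- simple, Specht, Young, permutation --- are ``projective generators up to filtration'' in a suitable sense. First I would recall that $c_{\He_q(d)}(M) = r(\dim P_\bullet)$ for the minimal projective resolution $P_\bullet \to M$; the standard observation is that $\dim P_n = \sum_{\lambda} \dim D^\lambda \cdot \dim \Ext^n_{\He_q(d)}(P_n, D^\lambda)$ is controlled by $\dim \Hom_{\He_q(d)}(P_n, \bigoplus_\lambda D^\lambda) = \dim \Ext^n_{\He_q(d)}(M, \bigoplus_\lambda D^\lambda)$, and since the $D^\lambda$ are self-dual this is the same rate of growth as $\Ext^n_{\He_q(d)}(\bigoplus_\lambda D^\lambda, M)$. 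This gives (a)$=$(b). Conversely, for any finite-dimensional $L$ one has $r(\Ext^\bullet_{\He_q(d)}(L,M)) \le c_{\He_q(d)}(M)$, since $\dim \Ext^n(L,M) \le \dim\Hom(P_n, \text{(injective hull of }M)) $ is bounded by a constant times $\dim P_n$; this bounds all of (b)--(e) above by (a).

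The heart of the argument is then the reverse inequalities, i.e., that each of the rates in (c), (d), (e) is at least $c_{\He_q(d)}(M)$. The key structural inputs are: every simple $D^\lambda$ (for $\lambda$ $l$-regular) appears as a composition factor, indeed as the head, of the Specht module $S^\lambda$, so $\bigoplus_\lambda S^\lambda$ surjects onto (or at least has in its radical series) every simple; every Young module $Y^\lambda$ has $S^\lambda$ as a submodule (the defining property quoted in Section 2), and $M^\lambda = \bigoplus_\mu (Y^\mu)^{\oplus a_{\lambda\mu}}$ with $Y^\lambda$ appearing, so $\bigoplus_\lambda M^\lambda$ and $\bigoplus_\lambda Y^\lambda$ have the same additive closure as modules. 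Using the long exact sequence in cohomology applied to short exact sequences relating $S^\lambda$ to its simple head and radical (and inductively peeling off composition factors), one shows $r(\Ext^\bullet(\bigoplus S^\lambda, M)) \ge r(\Ext^\bullet(\bigoplus D^\lambda, M))$, because if a rate of growth dropped it would have to drop for every composition factor, contradicting (b). The same dimension-shifting/long-exact-sequence bookkeeping handles Young modules (via $0 \to S^\lambda \to Y^\lambda \to Y^\lambda/S^\lambda \to 0$ and induction on the dominance order, noting $Y^\lambda/S^\lambda$ is filtered by $S^\mu$ with $\mu \rhd \lambda$), and permutation modules reduce to Young modules by the direct-sum decomposition and Proposition~\ref{prop:Bracelet}(b)-type additivity of rates.

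Concretely I would organize it as a cycle of inequalities: (a)$\ge$(e)$\ge$(d)$\ge$(c)$\ge$(b)$\ge$(a), or whichever ordering makes the filtration arguments cleanest. The step (a)$\ge$(e) (or the bound of each Ext-rate by the complexity) is the easy direction using minimality of the resolution. The step (b)$\ge$(a) is the identification of complexity with $\Ext$ into the sum of all simples, which is essentially formal once one writes $\dim P_n$ in terms of multiplicities of simples in $\operatorname{soc}$/head. The genuinely delicate direction is propagating the rate of growth \emph{downward} through the filtrations --- from permutation to Young to Specht to simple --- where one must ensure that taking the direct sum over all $\lambda$ of each family really does ``see'' every simple with controlled multiplicity, so that no cancellation in the long exact sequences can cause the rate to collapse.

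\textbf{Main obstacle.} The main obstacle I anticipate is the Specht-to-simple and Young-to-Specht steps: Specht modules are not projective and their radical filtrations are not understood in general, so one cannot directly compare $\Ext^\bullet(S^\lambda, M)$ with $\Ext^\bullet(D^\mu, M)$ for a single pair. The resolution is to work with the \emph{whole} direct sum $\bigoplus_{\lambda} S^\lambda$ and use that the matrix relating $[S^\lambda]$ to $[D^\mu]$ in the Grothendieck group (the decomposition matrix) is unitriangular with respect to dominance, hence invertible over $\Z$; combined with the long exact sequence in each cohomological degree and a careful induction on $\lambda$ (from the bottom of the dominance order), this forces the rates to agree. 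One must be slightly careful that ``rate of growth'' is only sub-additive, not additive, under short exact sequences, so the induction must track upper and lower bounds simultaneously rather than an exact equality at each stage.
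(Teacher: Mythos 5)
Your proposal matches the paper's argument in all essentials: identify (a) with the rate of growth of $\Ext$ against the direct sum of all simples via the minimal projective resolution, get the easy upper bounds on the $\Ext$-rates from the complexity, push the lower bound from simples to Specht to Young by induction on the dominance order using the short exact sequences $0\to N\to S^\tau\to D^\tau\to 0$ and $0\to S^\tau\to Y^\tau\to Z\to 0$, and finish with the Young--permutation correspondence. Your worry about tracking upper and lower bounds simultaneously within the induction is unnecessary --- the paper handles the two inequalities separately (the induction only ever uses the sub-additive upper bound from a short exact sequence, while the reverse inequality $(c),(d)\leq(b)$ comes independently from the complexity bound), which is cleaner than the two-sided bookkeeping you anticipate.
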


\begin{proof} $\text{(a)}=\text{(b)}$. This follows by using the standard arguments (cf. \cite[Prop. 5.3.5]{Ben}).

$\text{(c)}\leq \text{(b)}$, $\text{(d)}\leq \text{(b)}$.  One can apply \cite[Prop. 5.3.5]{Ben} to deduce these statements.

$\text{(b)}\leq \text{(c)}$. This will be proved by using induction on
the dominance order of partitions. Set $s:=r(\Ext^{\bullet}_{\He_q(d)}(\oplus_{\lambda\vdash d}S^{\lambda},M))$.
If $\lambda$ be maximal with respect to $\unlhd$ then $S^{\lambda}=D^{\lambda}$. Consequently, 
$$r(\Ext^{\bullet}_{\He_q(d)}(D^{\lambda},M))\leq s.$$ By induction suppose that for every $\mu \rhd \tau$, we know
$r(\Ext^{\bullet}_{\He_q(d)}(D^{\mu},M))\leq s$. We need to show that $r(\Ext^{\bullet}_{\He_q(d)}(D^{\tau},M))\leq s$.
There exists a short exact sequence of the form
\begin{equation}
0\rightarrow N \rightarrow S^{\tau}\rightarrow D^{\tau}\rightarrow 0
\end{equation}
with $N$ having composition factors of the form $D^{\mu}$ with
$\mu \rhd \tau$. Therefore, 
$$r(\Ext^{\bullet}_{\He_q(d)}(D^{\tau},M))\leq
\text{max}\{ r(\Ext^{\bullet}_{\He_q(d)}(S^{\tau},M)), r(\Ext^{\bullet}_{\He_q(d)}(N,M)) \} \leq s.$$

$\text{(c)}\leq \text{(d)}$. This statement will be proved in a similar fashion
as above. Set
$$y:=r(\Ext^{\bullet}_{\He_q(d)}(\oplus_{\lambda\vdash d}Y^{\lambda},M)).$$
Let $\lambda$ be maximal with respect to $\unlhd$ so $Y^{\lambda}=S^{\lambda}$ and
$r(\Ext^{\bullet}_{\He_q(d)}(S^{\lambda},M))\leq y$.
Suppose that for any $\mu \rhd \tau$, $r(\Ext^{\bullet}_{\He_q(d)}(S^{\mu},M))\leq y$.
It will suffice to show that $r(\Ext^{\bullet}_{\He_q(d)}(S^{\tau},M))\leq y$.
There is a short exact sequence of the form
\begin{equation}\label{eq:Young-Specht}
0\rightarrow S^{\tau} \rightarrow Y^{\tau}\rightarrow Z \rightarrow 0
\end{equation}
with $Z$ having a Specht filtration with factors of the form $S^{\mu}$ with
$\mu \rhd \tau$. Consequently,
$$r(\Ext^{\bullet}_{\He_q(d)}(S^{\tau},M))\leq
\text{max}\{ r(\Ext^{\bullet}_{\He_q(d)}(Y^{\tau},M)), r(\Ext^{\bullet}_{\He_q(d)}(N,M)) \} \leq y.$$

$\text{(d)} = \text{(e)}$. The statement follows because every Young module appears as a summand of a permutation module, and the summands of the
permutation modules are Young modules.
\end{proof}

\subsection{\bf Complexity and Support Varieties} We now can relate the complexity of modules in $\text{mod}(\He_q(d)))$ to the dimension of their
support varieties. Furthermore, every module in $\He_q(d))$ has complexity less than the complexity of the trivial module. Note that
for Hopf algebras this is an easy consequence of tensoring a minimal projective resolution of the trivial module by the given module $M$.

\begin{cor} \label{c:com-support} Let $M\in \operatorname{mod}(\He_q(d))$. Then
\begin{itemize}
\item[(a)] $c_{\He_q(d)}(M)=\dim V_{\He_q(d)}(M)$
\item[(b)] $c_{\He_q(d)}(M)\leq c_{\He_q(d)}(\C)$
\end{itemize}
\end{cor}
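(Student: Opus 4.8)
\textbf{Proof proposal for Corollary \ref{c:com-support}.}

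The plan is to prove part (a) first and then deduce part (b) as a consequence. For part (a), I would identify each of the five rates of growth from Theorem \ref{thm:comparerates} with a geometric quantity, and match that against $\dim V_{\He_q(d)}(M)$. Concretely, by Theorem \ref{thm:comparerates}(e), $c_{\He_q(d)}(M) = r(\Ext^\bullet_{\He_q(d)}(\oplus_{\lambda \vdash d} M^\lambda, M))$, and since $M^\lambda \cong \C \uparrow_{\He_q(\lambda)}^{\He_q(d)}$, Frobenius reciprocity gives $\Ext^\bullet_{\He_q(d)}(M^\lambda, M) \cong \Ext^\bullet_{\He_q(\lambda)}(\C, M\downarrow_{\He_q(\lambda)}) = \HH^\bullet(\He_q(\lambda), M)$. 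Thus $c_{\He_q(d)}(M)$ equals the maximum over $\lambda \vDash d$ of the rate of growth of $\HH^\bullet(\He_q(\lambda), M)$ as a graded vector space. The next step is the standard commutative-algebra fact (as in \cite[\S 5.7]{Ben}): for a finitely generated graded module over a finitely generated commutative graded $\C$-algebra, the rate of growth of the module equals the Krull dimension of the quotient of the algebra by the module's annihilator. Here the algebra to use is $\widetilde{R}_\lambda$: since $R_\lambda$ is a finitely generated $\widetilde{R}_\lambda$-module (stated after Theorem \ref{thm:restriction}) and $\HH^\bullet(\He_q(\lambda), M)$ is a finitely generated $R_\lambda$-module (Theorem \ref{thm:fg}(b)), it is a finitely generated $\widetilde{R}_\lambda$-module, and its rate of growth is $\dim W_\lambda(M)$ by definition of $W_\lambda(M)$ as the variety of the annihilator ideal $J_{\He_q(\lambda)}(\C, M)$ in $\widetilde{R}_\lambda$.

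Putting these together, $c_{\He_q(d)}(M) = \max_{\lambda \vDash d} \dim W_\lambda(M)$. On the other hand, $V_{\He_q(d)}(M) = \bigcup_{\mu \vDash (d)} \res_{(d), \mu}^\ast(W_\mu(M))$ by Definition \ref{lem:vsupport}(b), so $\dim V_{\He_q(d)}(M) = \max_{\mu \vDash (d)} \dim \res_{(d),\mu}^\ast(W_\mu(M))$, and by Proposition \ref{prop:Tavern}(a) each $\dim \res_{(d),\mu}^\ast(W_\mu(M)) = \dim W_\mu(M)$. Hence both sides equal $\max_{\mu \vDash (d)} \dim W_\mu(M)$, establishing (a). For part (b), I would compare $V_{\He_q(d)}(M)$ with $V_{\He_q(d)}(\C)$: by Proposition \ref{prop:Bracelet}-type monotonicity one expects $W_\mu(M) \subseteq W_\mu(\C) = W_\mu$ for every $\mu$, since the annihilator of $\HH^\bullet(\He_q(\mu), M)$ over $\widetilde{R}_\mu$ contains the annihilator of $\widetilde{R}_\mu$ acting on itself, which is zero — i.e. $W_\mu(M)$ is simply a closed subvariety of the whole space $W_\mu = W_\mu(\C)$. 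Applying $\res_{(d),\mu}^\ast$ and taking unions gives $V_{\He_q(d)}(M) \subseteq V_{\He_q(d)}(\C)$, so $\dim V_{\He_q(d)}(M) \le \dim V_{\He_q(d)}(\C)$, and (a) converts this into $c_{\He_q(d)}(M) \le c_{\He_q(d)}(\C)$.

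The step I expect to be the main obstacle is the clean identification of the rate of growth of $\HH^\bullet(\He_q(\lambda), M)$ over the graded vector space with $\dim W_\lambda(M)$ — specifically, making sure the passage from the $R_\lambda$-module structure to the $\widetilde{R}_\lambda$-module structure does not change the rate of growth (it does not, because $R_\lambda$ is module-finite over $\widetilde{R}_\lambda$, so the two annihilator varieties have the same dimension after $\operatorname{MaxSpec}$), and that the grading conventions (the generators $x_\nu$ of $\widetilde{R}_\nu$ have even positive degree $2l-2$) are handled so the Hilbert-series/Krull-dimension correspondence applies verbatim. Once this dictionary between rate of growth and Krull dimension is in place, the rest is bookkeeping with Definition \ref{lem:vsupport}, Proposition \ref{prop:Tavern}, and Theorem \ref{thm:comparerates}. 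I would also remark, as the paper emphasizes, that (b) is nontrivial here precisely because $\He_q(d)$ has no coproduct, so the usual one-line argument (tensor a minimal resolution of $\C$ with $M$) is unavailable and must be replaced by the induction-restriction and transfer machinery developed in Sections 3 and 4.
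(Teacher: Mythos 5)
Your proposal matches the paper's proof: both deduce (a) from Theorem \ref{thm:comparerates}(e) together with Frobenius reciprocity, the rate-of-growth/Krull-dimension dictionary for finitely generated modules over a commutative graded ring (the paper cites Evens where you cite Benson's \S 5.7), and the fact that the finite maps $\res^\ast_{(d),\lambda}$ preserve dimension (Proposition \ref{prop:Tavern}(a)), arriving at $c_{\He_q(d)}(M) = \max_{\lambda} \dim W_\lambda(M) = \dim V_{\He_q(d)}(M)$; part (b) is then read off from (a) exactly as in the paper. Your explicit observation that $W_\mu(M)\subseteq W_\mu = W_\mu(\C)$ (hence $V_{\He_q(d)}(M)\subseteq V_{\He_q(d)}(\C)$) is a small but correct addition, filling in the inclusion the paper leaves implicit.
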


\begin{proof} (a) Since $\res_{d, \lambda}^\ast$ is a finite map, $\dim \res_{(d), \lambda}^\ast W_\lambda(M) = \dim W_\lambda(M)$. 

Next by using the argument given in \cite[p. 105-106]{Evens} one has 
$$r(\Ext^{\bullet}_{\He_q(\lambda)}(\C, M))= \dim W_\lambda(M).$$ 
According to Theorem~\ref{thm:comparerates} and Frobenius reciprocity,
\begin{eqnarray*}
c_{\He_q(d)}(M) & = & r(\Ext^{\bullet}_{\He_q(d)}(\oplus_{\lambda\vdash d} M^{\lambda}, M)) \\
& = & r(\oplus_{\lambda \vdash d} \Ext^{\bullet}_{\He_q(\lambda)}(\C, M)) \\
& = & \max_{\lambda \vdash d} \{r(\Ext^{\bullet}_{\He_q(\lambda)}(\C, M))\} \\
& = & \max_{\lambda \vdash d} \{\dim W_\lambda(M)\} \\
& = & \max_{\lambda \vdash d} \{\dim \res_{(d), \lambda}^\ast W_\lambda(M)\} \\
& = & \dim \bigcup_{\lambda \vdash d} \res_{(d), \lambda}^\ast W_\lambda(M) \\
& = & \dim V_{\He_q(d)}(M).
\end{eqnarray*}

(b) From part (a), $c_{\He_q(d)}(M) = \dim V_{\He_q(d)}(M) \leq \dim V_{\He_q(d)}(\C) = c_{\He_q(d)}(\C)$.
\end{proof}

\section{Permutation and Young modules}

\subsection{}  In this section we will use our established properties on complexity and support varieties, in addition to the theory of Young vertices, to give an formula for the complexities of the permutation 
modules $\{M^\lambda\}$ and the Young modules $\{Y^\lambda\}$. This is accomplished by first determining their support varieties as images of the map $\res_{d, \lambda}^\ast$ (resp. 
$\res_{d, \rho(\lambda)}^\ast$) on the support varieties of the trivial module.  

Let $\lfloor\  \rfloor$ denote the floor function. Note that the Krull dimension of the cohomology ring $\HH^{\bullet}(\He_q(d)),\C)$ or equivalently 
$\dim V_{\He_q(d)}(\C)$ is $\lfloor d/l \rfloor$. We can now determine the complexity and support varieties for the permutation modules $M^\lambda$:

\begin{prop} Let $\lambda=(\lambda_1,\ldots , \lambda_s) \vDash d$ and $M^{\lambda}$ be a permutation
module for $\He_q(d)$. Then:
\begin{itemize}
\item[(a)] $V_{\He_q(d)}(M^{\lambda}) = \res_{(d), \lambda}^\ast (V_{\He_q(\lambda)}(\C))$;
\item[(b)] $c_{\He_q(d)}(M^{\lambda})  = \sum_{i = 1}^s \lfloor \lambda_i / l \rfloor$.
\end{itemize}
\end{prop}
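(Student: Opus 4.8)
The plan is to deduce part (a) from the Young-vertex machinery already assembled, and then to read off part (b) as a dimension count. For (a), recall the standard fact (Dipper--Du, \cite{DD,Du}) that the permutation module $M^\lambda \cong \C\uparrow_{\He_q(\lambda)}^{\He_q(d)}$ has $\He_q(\lambda)$ as a ``Young vertex'': concretely, $M^\lambda \mid \C\uparrow_{\He_q(\lambda)}^{\He_q(d)}$ trivially, and $\C \mid M^\lambda\downarrow_{\He_q(\lambda)}$ because the coset sum $x_\lambda$ generates a trivial $\He_q(\lambda)$-submodule of $M^\lambda$ on which $\He_q(\lambda)$ acts via the one-dimensional representation $\C$, and this submodule splits off upon restriction. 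With $\mu = \lambda$ and $N = \C$, Proposition~\ref{prop:go} applies verbatim and yields
\[
V_{\He_q(d)}(M^\lambda) = \res_{(d),\lambda}^\ast V_{\He_q(\lambda)}(\C),
\]
which is exactly (a). (Alternatively, one avoids invoking Proposition~\ref{prop:go} and argues directly: Proposition~\ref{prop:induction}(c) gives $V_{(d)}(\C\uparrow) = \res_{(d),\lambda}^\ast V_\lambda(\C)$, while the splitting $\C \mid M^\lambda\downarrow_{\He_q(\lambda)}$ together with Proposition~\ref{prop:Bracelet} gives the reverse inclusion via Definition~\ref{lem:vsupport}.)

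For (b), I would combine Corollary~\ref{c:com-support}(a), which identifies $c_{\He_q(d)}(M^\lambda)$ with $\dim V_{\He_q(d)}(M^\lambda)$, with part (a) and the fact that $\res_{(d),\lambda}^\ast$ is a finite map, hence dimension-preserving on closed subvarieties (Proposition~\ref{prop:Tavern}(a)). Thus
\[
c_{\He_q(d)}(M^\lambda) = \dim V_{\He_q(d)}(M^\lambda) = \dim V_{\He_q(\lambda)}(\C).
\]
Now $V_{\He_q(\lambda)}(\C)$ is the full variety $W_\lambda$ up to the finite covering maps in Definition~\ref{lem:vsupport}: taking $\mu = \lambda$ in that definition, $V_{\He_q(\lambda)}(\C) \supseteq W_\lambda(\C) = W_\lambda = \operatorname{MaxSpec}\widetilde R_\lambda$ since the annihilator of $\HH^\bullet(\He_q(\lambda),\C) = R_\lambda$ in $\widetilde R_\lambda$ is zero ($\widetilde R_\lambda \subseteq R_\lambda$). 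Since the K\"unneth isomorphism gives $\He_q(\lambda) \cong \bigotimes_i \He_q(\lambda_i)$ and hence $\widetilde R_\lambda \cong \bigotimes_i \widetilde R_{\lambda_i}$, and each $\widetilde R_{\lambda_i}$ is (by Theorem~\ref{thm:restriction} and the remark preceding the Finite Generation subsection) a polynomial ring in $\lfloor \lambda_i/l\rfloor$ variables — equivalently $\dim W_{\lambda_i} = \dim V_{\He_q(\lambda_i)}(\C) = \lfloor\lambda_i/l\rfloor$ by the Krull dimension statement quoted just before the proposition — we get
\[
\dim W_\lambda = \sum_{i=1}^s \dim W_{\lambda_i} = \sum_{i=1}^s \lfloor \lambda_i/l\rfloor.
\]
It remains to check $\dim V_{\He_q(\lambda)}(\C) = \dim W_\lambda$, i.e.\ that the other pieces $\res_{\lambda,\nu}^\ast W_\nu(\C)$ in the union do not exceed $\dim W_\lambda$; but each such piece has dimension $\dim W_\nu(\C) \le \dim W_\nu \le \dim W_\lambda$ by finiteness of $\res_{\lambda,\nu}^\ast$ and $\nu \vDash \lambda$. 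This gives $c_{\He_q(d)}(M^\lambda) = \sum_i \lfloor\lambda_i/l\rfloor$.

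The only genuinely delicate point is the Young-vertex input underlying (a): that $\C$ is a direct summand of $M^\lambda\downarrow_{\He_q(\lambda)}$, so that Proposition~\ref{prop:go} can be invoked. This is where the Dipper--Du theory is essential, and it is the step I would state most carefully, citing \cite{DD,Du}; everything after it is bookkeeping with the dimension formulas already established in Sections~4 and 5. One should also make sure the Krull dimension statement ``$\dim V_{\He_q(d)}(\C) = \lfloor d/l\rfloor$'' quoted before the proposition is applied with $d$ replaced by each part $\lambda_i$, which is legitimate since that statement holds for every Hecke algebra $\He_q(m)$.
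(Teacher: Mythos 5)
Your proof is correct and follows essentially the same route as the paper: part (a) is Proposition~\ref{prop:go} applied to $M^\lambda\cong\C\uparrow_{\He_q(\lambda)}^{\He_q(d)}$, and part (b) is a dimension count via Corollary~\ref{c:com-support}(a), Proposition~\ref{prop:Tavern}(a), and Theorem~\ref{thm:restriction}. The one small divergence is the justification that $\C$ is a summand of $M^\lambda\downarrow_{\He_q(\lambda)}$: you offer the direct $x_\lambda$-generates-a-split-trivial-submodule argument (citing Dipper--Du), whereas the paper reads it off from the Mackey decomposition (Theorem~\ref{thm:mackey}, $w=1$ term); the two are equivalent, and your write-up of (b) is actually more explicit than the paper's sketch in that it spells out the reduction from complexity to $\dim W_\lambda$ and verifies that the other pieces of the union in Definition~\ref{lem:vsupport} do not raise the dimension.
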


\begin{proof} Part (a) follows immediately from Proposition \ref{prop:go} since $M^\lambda \cong \C \uparrow^{\He_q(d)}_{\He_q(\lambda)}$ and $\C$ is a direct summand of $M^\lambda \downarrow_{\He_{q}(\lambda)}$ by Theorem \ref{thm:mackey}. 
One can deduce (b) follows from (a) since the map $\res_{(d), \lambda}^\ast$ preserves
dimension and $\dim(V_{\He_q(\lambda)}(\C))$ is determined by Theorem~\ref{thm:restriction}.
\end{proof}

\subsection{} Dipper-Du \cite[5.8 Theorem]{DD} determines the vertices of the Young module $Y^{\lambda}$ for $\He_{q}(d)$ as $\He_{q}(\rho(\lambda))$ where $\rho(\lambda)$ is constructed as follows. 
Notice that any $\lambda \vdash d$ has a unique $l$-adic expansion of the form:
\begin{equation}
\lambda = \lambda_{[0]} + \lambda_{[1]} l,
\end{equation}
where $\lambda_{[0]}$ is an $l$-restricted partition of $d$ and $\lambda_{[1]}$ is a partition of $d$. Define the partition:
\begin{equation} \label{rho}
\rho(\lambda) := \left( l^{|\lambda_{[1]}|}, 1^{|\lambda_{[0]}|} \right).
\end{equation}
The partition $\lambda_{[0]}$ can be obtained by successively striping horizontal rim $l$-hooks from $\lambda$, and $|\lambda_{[1]}|$ is the number of such hooks removed. The following theorem demonstrates that the complexity of the Young module $Y^{\lambda}$ is $|\lambda_{[1]}|$.

\begin{theorem} \label{youngcomplexity}
Let $\lambda \vdash d$ with $Y^{\lambda}$ the corresponding Young module for $\He_q(\lambda)$. Then
\begin{itemize}
\item[(a)] $V_{\He_q(d)}(Y^{\lambda}) = \res_{d, \rho(\lambda)}^\ast V_{\He_q(\rho(\lambda))}(\C)$.
\item[(b)] $c_{\He_q(d)}(Y^{\lambda}) = |\lambda_{[1]}|$.
\end{itemize}
\end{theorem}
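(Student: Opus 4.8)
The plan is to deduce this theorem from the machinery already assembled, principally Proposition~\ref{prop:go} together with the Dipper--Du identification of the vertex of $Y^{\lambda}$. First I would recall that by Dipper--Du \cite[5.8 Theorem]{DD} the vertex of $Y^{\lambda}$ is $\He_q(\rho(\lambda))$, which concretely means there is an $\He_q(\rho(\lambda))$-module $N$ such that $Y^{\lambda} \mid N \uparrow_{\He_q(\rho(\lambda))}^{\He_q(d)}$ and $N \mid Y^{\lambda} \downarrow_{\He_q(\rho(\lambda))}$; in fact one expects $N$ to be a suitable trivial-source (Young) summand, and the key point is that $Y^{\lambda}$ is relatively $\He_q(\rho(\lambda))$-projective and $\rho(\lambda)$ is minimal with this property. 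With such an $N$ in hand, Proposition~\ref{prop:go} applied to $\mu = \rho(\lambda) \vDash (d) = \lambda$ immediately gives $V_{\He_q(d)}(Y^{\lambda}) = \res_{d,\rho(\lambda)}^{\ast} V_{\He_q(\rho(\lambda))}(N)$.

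To finish part (a) I then need $V_{\He_q(\rho(\lambda))}(N) = V_{\He_q(\rho(\lambda))}(\C)$. The algebra $\He_q(\rho(\lambda)) \cong \He_q(l)^{\otimes |\lambda_{[1]}|} \otimes \He_q(1)^{\otimes |\lambda_{[0]}|}$, and $N$ is a trivial-source module for it whose vertex is the whole algebra, so $N$ should have the trivial module $\C$ as a direct summand of $N \downarrow$ to any $l$-parabolic subcomposition (indeed $N$ restricted to the diagonal Young subalgebras contains $\C$ as a summand), forcing $V_{\He_q(\rho(\lambda))}(N) = W_{\rho(\lambda)} = V_{\He_q(\rho(\lambda))}(\C)$ by the definition of support variety and Proposition~\ref{prop:Bracelet}(b). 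Alternatively, and perhaps more cleanly, one observes that $\C \mid N$ (as $N$ is a sum of Young modules for $\He_q(\rho(\lambda))$ and has full vertex, so the Young module $Y^{(1^{\ldots})}\otimes\cdots = \C$-like summand occurs) so $V_{\He_q(\rho(\lambda))}(\C) \subseteq V_{\He_q(\rho(\lambda))}(N) \subseteq W_{\rho(\lambda)} = V_{\He_q(\rho(\lambda))}(\C)$, where the last equality is because the trivial module of a block of full complexity has support equal to the whole variety (this is the content of Theorem~\ref{thm:restriction} and the fact that $\C$ is not projective for $\He_q(l)$).

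For part (b), I would simply combine (a) with Corollary~\ref{c:com-support}(a) and Proposition~\ref{prop:Tavern}(a): since $\res_{d,\rho(\lambda)}^{\ast}$ is a finite map it preserves dimension, so
\[ c_{\He_q(d)}(Y^{\lambda}) = \dim V_{\He_q(d)}(Y^{\lambda}) = \dim \res_{d,\rho(\lambda)}^{\ast} V_{\He_q(\rho(\lambda))}(\C) = \dim V_{\He_q(\rho(\lambda))}(\C). \]
By Theorem~\ref{thm:restriction} the Krull dimension of $\widetilde R_{\rho(\lambda)}$, equivalently $\dim V_{\He_q(\rho(\lambda))}(\C)$, equals $\sum_i \lfloor \rho(\lambda)_i / l\rfloor = |\lambda_{[1]}|$ since $\rho(\lambda) = (l^{|\lambda_{[1]}|}, 1^{|\lambda_{[0]}|})$. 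The main obstacle I anticipate is the bookkeeping in verifying the precise hypothesis of Proposition~\ref{prop:go}: one must extract from the Dipper--Du vertex statement an honest $\He_q(\rho(\lambda))$-module $N$ with both divisibility relations $Y^{\lambda}\mid N\uparrow$ and $N\mid Y^{\lambda}\downarrow$, and then check $\C\mid N$ (or at least that $N$ has full support). This is where one has to invoke the trivial-source/$p$-permutation structure of Young modules carefully — the rest is formal once that input is secured.
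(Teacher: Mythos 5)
Your approach is essentially the same as the paper's: Part (a) is exactly an application of Proposition~\ref{prop:go} with $\mu = \rho(\lambda)$, and Part (b) follows by taking dimensions and invoking Proposition~\ref{prop:Tavern}(a) and Theorem~\ref{thm:restriction}, which is precisely what the paper does. The one place you overcomplicate matters is the detour through a general $N$ and the subsequent argument that $V_{\He_q(\rho(\lambda))}(N) = V_{\He_q(\rho(\lambda))}(\C)$: the Dipper--Du vertex theorem for Young modules directly delivers $N = \C$, since $Y^\lambda \mid M^{\rho(\lambda)} = \C\!\uparrow^{\He_q(d)}_{\He_q(\rho(\lambda))}$ is the trivial-source statement and the complementary condition $\C \mid Y^\lambda\!\downarrow_{\He_q(\rho(\lambda))}$ is part of the same package, so Proposition~\ref{prop:go} applies immediately with no need to compare $V(N)$ to $V(\C)$.
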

\begin{proof} Part (a) follows from Proposition \ref{prop:go}. In order to prove (b) take the dimension on both sides of (a) and recall from Proposition \ref{prop:Tavern} that $\res_{(d), \rho(\lambda)}^\ast$ preserves dimension, and the dimension of the support variety of the trivial module is also determined in Theorem~\ref{thm:restriction}.
\end{proof}

As a consequence of the aforementioned theorem, we recover the well-known fact that $Y^\lambda$ is projective exactly when $\lambda$ is $p$-restricted. Furthermore, from Theorem \ref{youngcomplexity}(b), 
one can see that for a block $\mathbb{B}$ of weight $w$, there are Young modules in $\mathbb{B}$ of every possible complexity $\{0,1, \ldots , w\}$. The following result characterizes 
Young module of complexity one. 

\begin{cor}A nonprojective Young module $Y^\lambda$ has complexity one if and only if $\lambda$ is of the form
$(\mu_1+l,\mu_2, \ldots , \mu_s)$ where $(\mu_1,\mu_2, \ldots ,\mu_s)$ is $l$-restricted.
\end{cor}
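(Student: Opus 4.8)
The plan is to reduce the statement to Theorem~\ref{youngcomplexity}(b) via the $l$-adic expansion $\lambda = \lambda_{[0]} + \lambda_{[1]} l$. By that theorem, $Y^\lambda$ is nonprojective with complexity one precisely when $|\lambda_{[1]}| = 1$, i.e.\ exactly one horizontal rim $l$-hook is stripped from $\lambda$ to reach the $l$-restricted partition $\lambda_{[0]}$. So the task is to translate the combinatorial condition ``$\lambda_{[1]}$ is a single box'' into the explicit shape $\lambda = (\mu_1 + l, \mu_2, \dots, \mu_s)$ with $(\mu_1,\dots,\mu_s)$ $l$-restricted.

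First I would unwind the meaning of $|\lambda_{[1]}|=1$: it says there is a unique horizontal rim $l$-hook that can be removed from $\lambda$, and after removing it the resulting partition $\mu := \lambda_{[0]}$ is $l$-restricted. A horizontal rim $l$-hook added to a partition $\mu$ lands along the rim; I would argue that if the result $\lambda$ is to satisfy ``$\lambda_{[0]}$ is $l$-restricted and equals $\mu$'' with $\mu$ itself $l$-restricted, then the hook must be added to the first row. Indeed, adding a horizontal rim $l$-hook that is not confined to the first row either fails to leave an $l$-restricted partition or admits an alternative hook removal; the only way to add $l$ boxes horizontally along the rim of an $l$-restricted $\mu$ and keep control of the $l$-restrictedness is to append them to row~1, producing $(\mu_1 + l, \mu_2, \dots, \mu_s)$. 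Conversely, if $\lambda = (\mu_1+l, \mu_2, \dots, \mu_s)$ with $(\mu_1,\dots,\mu_s)$ $l$-restricted, then stripping the $l$-hook consisting of the last $l$ boxes of row~1 returns $\mu$, which is $l$-restricted, so $\lambda_{[0]} = \mu$ and $|\lambda_{[1]}| = 1$; hence by Theorem~\ref{youngcomplexity}(b), $c_{\He_q(d)}(Y^\lambda) = 1$.

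I would also need to check that in the forward direction the partition $\lambda$ with a single rim $l$-hook really does force $\mu_1 + l \ge \mu_2$ automatically (so that $(\mu_1+l,\mu_2,\dots)$ is a genuine partition) and that $\lambda$ is nonprojective exactly when $|\lambda_{[1]}| \ge 1$ — the latter is already recorded in the excerpt as the remark that $Y^\lambda$ is projective iff $\lambda$ is $l$-restricted, so nonprojectivity plus complexity one is equivalent to $|\lambda_{[1]}| = 1$.

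The main obstacle is the combinatorial lemma identifying partitions that differ from an $l$-restricted partition by exactly one horizontal rim $l$-hook: I need to show that such a hook is necessarily added to the first row and that no \emph{other} rim $l$-hook can be removed from $\lambda$ (which would make $|\lambda_{[1]}| > 1$ or change $\lambda_{[0]}$). This is a routine but slightly delicate argument about rim hooks and the $l$-restricted condition $\lambda_i - \lambda_{i+1} \le l-1$; once it is in place, the corollary follows immediately from Theorem~\ref{youngcomplexity}(b).
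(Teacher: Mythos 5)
Your proposal is correct and follows the paper's approach: reduce via Theorem~\ref{youngcomplexity}(b) to the condition $|\lambda_{[1]}| = 1$, i.e.\ $\lambda_{[1]} = (1)$. The rim-hook combinatorics you flag as the ``main obstacle'' is, however, unnecessary: the $l$-adic expansion $\lambda = \lambda_{[0]} + \lambda_{[1]}\, l$ is an \emph{entry-wise} sum of partitions, so $\lambda_{[1]} = (1)$ immediately forces $\lambda_1 = (\lambda_{[0]})_1 + l$ and $\lambda_i = (\lambda_{[0]})_i$ for $i \geq 2$; taking $\mu := \lambda_{[0]}$ (which is $l$-restricted by definition) gives the stated shape, and conversely $(\mu_1 + l, \mu_2, \dots, \mu_s) = \mu + (1)\,l$ is an $l$-adic expansion, so uniqueness gives $\lambda_{[1]} = (1)$. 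The ``stripping horizontal rim $l$-hooks'' description is a consequence of this expansion, not its definition, so there is no separate combinatorial lemma to verify --- this is precisely why the paper's own proof is a single line.
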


\begin{proof} From Theorem \ref{youngcomplexity}, $\lambda_{[1]}=1$ precisely when the $l$-adic expansion of $\lambda$ has the form $\lambda_{[0]} + (1)l$. 
\end{proof}

Recall that a module is called {\em periodic} if it admits a periodic projective resolution. From the definition of complexity it is easy to see 
that if $M$ is a finite-dimensional periodic $\He_{q}(d)$-module then $c_{\He_{q}(d)}(M)=1$. However, it is not clear whether the converse holds. 
For finite groups the known proofs depend on using the coalgebra structure on the group algebra which allows one to put an action on the tensor products of modules.

\subsection{} In this section we will apply our prior computation for Young modules to give an explicit 
description for the location of the support varieties for modules in 
a block ${\mathbb B}$ of $\He_{q}(d)$. For a Specht module $\He_{q}(d)$-module, $S_{\lambda}$, let ${\mathbb B}_{\lambda}$ be the 
block of $\He_{q}(d)$ containing $S_{\lambda}$. We remark that all the composition factors of a given Specht module lie in the same block. 
Note that by Nakayama rule, ${\mathbb B}_{\lambda}={\mathbb B}_{\mu}$ if and only if they have the same $l$-core. 

Let $d=c_{[0]}+c_{[1]}l$ be the unique $p$-adic expansion of $d$, so $0 \leq c_{[0]}<l$, and $d=a_{[0]}+a_{[1]}l$ is another expansion, with $0 \leq a_{[0]}$.  Then 
$a_{[0]}\geq c_{[0]}$ and $a_{[1]}\leq c_{[1]}$ and 
$$\He_{q}((l^{a_{[1]}},1^{a_{[0]}})) \leq \He_{q}((l^{c_{[1]}},1^{c_{[0]}})).$$ 

Now suppose $\mathbb{B}_{\mu}$ is a block of $\He_{q}(d)$ with weight $w$ and $l$-core $\tilde{\mu}\vdash d-lw$.  Let $lw=c_{[1]}l$ and 
\begin{equation}
\label{defrho}
\rho_{\text{max}}=(1^{d-lw},l^{w}) \vdash d.
\end{equation}

Let $\tilde{\mu}=(\tilde{\mu}_1, \tilde{\mu}_2, \ldots)$. The algebra $\He_{q}(\rho_{\text{max}})$ is the Young vertex for $Y^\mu$ where $\mu=(\tilde{\mu}_1+lw, \tilde{\mu}_2, \ldots)$. 
Furthermore, if ${\mathbb B}_{\lambda} ={\mathbb B}_\mu$, then $\mu \unrhd \lambda$ and the Young vertex of $Y^\lambda$ is of the form
$$\rho(\lambda)= (1^{a_{[0]}},l^{a_{[1]}})$$ where $a_{[0]} \geq d-lw$ and $a_{[1]}\leq w$. Therefore, 
$$\He_{q}({\rho(\lambda)}) \leq \He_{q}(\rho_{\text{max}})$$
That is, the Young vertices for the Young modules in a block are all contained in a unique vertex $\He_{q}(\rho_{\max})$, which is the vertex for the Young module $Y^{\tilde{\mu}+(lw)}$. 

Define the support of the block to be  $V_{\He_{q}(d)}({\mathbb B}_{\lambda}):=V_{\He_{q}(d)}(\oplus_{\mu\in {\mathbb B}_{\lambda}}D^{\mu})$
We now give a precise location for the support variety for a block of the Hecke algebra $\He_{q}(d)$. 

\begin{theorem} \label{thm:location} Let ${\mathbb B}_{\lambda}$ be a block of $\He_{q}(d)$ of weight $w$ and 
let $M$ be a finite-dimensional module in ${\mathbb B}_{\lambda}$. Let $\rho:=\rho_{\operatorname{max}}$ for the block 
${\mathbb B}_{\lambda}$. 
Then:
\begin{itemize} 
\item[(a)] $V_{\He_{q}(d)}({\mathbb B}_{\lambda})=V_{\He_{q}(d)}(\oplus_{\mu\in {\mathbb B}_{\lambda}} S^{\mu})=
V_{\He_{q}(d)}(\oplus_{\mu\in {\mathbb  B}_{\lambda}} Y^{\mu})$;
\item[(b)] $\operatorname{res}_{(d),\rho}(V_{\He_{q}(\rho)}(k))=V_{\He_{q}(d)}({\mathbb B}_{\lambda})$;  
\item[(c)] $V_{\He_{q}(d)}(M)\subseteq \operatorname{res}_{(d),\rho}(V_{\He_{q}(\rho)}(k))$;
\item[(d)] $c_{\He_{q}(d)}(M)\leq w$. 
\end{itemize} 
\end{theorem}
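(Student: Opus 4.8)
The plan is to prove the four parts in the order (a), (b), (c), (d), since each builds on the previous ones and on the Young module computations already established.

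For part (a), I would first recall that every composition factor of a Specht module $S^\mu$ lies in the block $\mathbb{B}_\lambda$ precisely when $\mu$ has the appropriate $l$-core, so the direct sums $\oplus_{\mu \in \mathbb{B}_\lambda} D^\mu$, $\oplus_{\mu \in \mathbb{B}_\lambda} S^\mu$, and $\oplus_{\mu \in \mathbb{B}_\lambda} Y^\mu$ all involve the same collection of block indices. The equality of the three support varieties should then follow from \Cref{prop:Bracelet}(a): using the short exact sequences $0 \to N \to S^\mu \to D^\mu \to 0$ (with $N$ having composition factors $D^\tau$, $\tau \rhd \mu$) and $0 \to S^\mu \to Y^\mu \to Z \to 0$ (with $Z$ Specht-filtered by $S^\tau$, $\tau \rhd \mu$), combined with \Cref{prop:Bracelet}(b) for direct sums, one gets containments both ways by inducting on the dominance order exactly as in the proof of \Cref{thm:comparerates}. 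This reduces everything to understanding $V_{\He_q(d)}(\oplus_{\mu \in \mathbb{B}_\lambda} Y^\mu)$.

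For part (b), the key input is \Cref{youngcomplexity}(a), which gives $V_{\He_q(d)}(Y^\mu) = \res_{d,\rho(\mu)}^\ast V_{\He_q(\rho(\mu))}(\C)$ for each $\mu \in \mathbb{B}_\lambda$. By the discussion preceding the theorem, each Young vertex $\He_q(\rho(\mu))$ embeds into $\He_q(\rho_{\max})$ as a subalgebra (since $\rho(\mu) = (1^{a_{[0]}}, l^{a_{[1]}})$ with $a_{[1]} \le w$), so using \Cref{prop:Tavern}(b) to factor $\res_{d,\rho(\mu)}^\ast = \res_{d,\rho}^\ast \circ \res_{\rho,\rho(\mu)}^\ast$, we get $V_{\He_q(d)}(Y^\mu) \subseteq \res_{d,\rho}^\ast V_{\He_q(\rho)}(\C)$ for every $\mu$, hence $V_{\He_q(d)}(\mathbb{B}_\lambda) \subseteq \res_{d,\rho}^\ast V_{\He_q(\rho)}(\C)$. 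For the reverse inclusion, observe that $\rho = \rho_{\max}$ is itself the Young vertex $\rho(\mu_0)$ for $\mu_0 = (\tilde\mu_1 + lw, \tilde\mu_2, \dots)$, and $\mu_0 \in \mathbb{B}_\lambda$ since $\mu_0$ has $l$-core $\tilde\mu$; thus $V_{\He_q(d)}(Y^{\mu_0}) = \res_{d,\rho}^\ast V_{\He_q(\rho)}(\C)$ is one of the pieces in the union defining $V_{\He_q(d)}(\mathbb{B}_\lambda)$, giving the opposite containment.

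Part (c) is then nearly immediate: any finite-dimensional $M \in \mathbb{B}_\lambda$ has all composition factors among $\{D^\mu : \mu \in \mathbb{B}_\lambda\}$, so repeated application of \Cref{prop:Bracelet}(a) along a composition series yields $V_{\He_q(d)}(M) \subseteq \bigcup_{\mu \in \mathbb{B}_\lambda} V_{\He_q(d)}(D^\mu) = V_{\He_q(d)}(\mathbb{B}_\lambda)$, which by (b) equals $\res_{d,\rho}^\ast V_{\He_q(\rho)}(\C)$. For part (d), take dimensions in (c): by \Cref{c:com-support}(a), $c_{\He_q(d)}(M) = \dim V_{\He_q(d)}(M) \le \dim \res_{d,\rho}^\ast V_{\He_q(\rho)}(\C)$, and since $\res_{d,\rho}^\ast$ preserves dimension by \Cref{prop:Tavern}(a), this equals $\dim V_{\He_q(\rho)}(\C) = \sum_i \lfloor (\rho_{\max})_i / l \rfloor = w$ by \Cref{thm:restriction}. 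The main obstacle I anticipate is the careful bookkeeping in part (b): one must verify both that every Young vertex appearing in the block genuinely sits inside $\He_q(\rho_{\max})$ (the inequalities $a_{[0]} \ge d - lw$, $a_{[1]} \le w$ from the $l$-adic expansion and the Nakayama/block constraint) and that $\rho_{\max}$ is actually realized as the vertex of a Young module $Y^{\mu_0}$ lying in the block, which is where the explicit form $\mu_0 = (\tilde\mu_1 + lw, \tilde\mu_2, \dots)$ and the horizontal-rim-$l$-hook description of $\lambda_{[0]}$ must be used.
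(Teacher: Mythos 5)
Your proposal is correct and follows the paper's own argument in all four parts: part (a) via the same two-sided induction on the dominance order using the short exact sequences from \Cref{thm:comparerates} together with \Cref{prop:Bracelet}, part (b) via the Young-vertex analysis preceding the theorem with $\rho_{\max}$ realized as the vertex of $Y^{\mu_0}$ for $\mu_0 = (\tilde\mu_1 + lw, \tilde\mu_2, \dots)$, and parts (c) and (d) by block containment and taking dimensions. In fact your write-up of part (b) is somewhat more explicit than the paper's (which only gestures at the factorization through $\rho_{\max}$), but the underlying argument is the same.
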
 

\begin{proof} (a): Since $S^{\mu}$ has a filtration with sections being irreducible module and 
$Y^{\mu}$ has a filtration with sections being Specht modules, one has using the definition of support in 
Section \ref{S:supporttheory}, 
$$V_{\He_{q}(d)}({\mathbb B}_{\lambda})\subseteq V_{\He_{q}(d)}(\oplus_{\mu\in {\mathbb B}_{\lambda}} S^{\mu})\subseteq 
V_{\He_{q}(d)}(\oplus_{\mu\in {\mathbb B}_{\lambda}} Y^{\mu}).$$ 

For the other inclusion, one needs to apply the ordering of factors on these filtrations. 
From Theorem \ref{thm:comparerates}, we have exact sequences of the form 
\begin{equation} 
0\rightarrow N \rightarrow S^{\tau}\rightarrow D^{\tau} \rightarrow 0 
\end{equation} 
where the composition factors in $N$ are of the form $D^{\mu}$ with $\mu \rhd \tau$. By induction 
we can assume that $V_{\He_{q}(d)}(N)\subseteq V_{\He_{q}(d)}(\oplus_{\mu\in {\mathbb B}_{\lambda}} S^{\mu})$
and Proposition \ref{prop:Bracelet}, it follows that 
$$V_{\He_{q}(d)}(D^{\tau})\subseteq V_{\He_{q}(d)}(S^{\tau})\cup V_{\He_{q}(d)}(N)\subseteq V_{\He_{q}(d)}(\oplus_{\mu\in {\mathbb B}_{\lambda}} S^{\mu}).$$
Therefore, $V_{\He_{q}(d)}(\oplus_{\mu\in {\mathbb B}_{\lambda}} S^{\mu})\subseteq V_{\He_{q}(d)}({\mathbb B}_{\lambda})$. 
A similar inductive argument using (\ref{eq:Young-Specht}) can be used to prove that 
$$V_{\He_{q}(d)}(\oplus_{\mu\in {\mathbb B}_{\lambda}} Y^{\mu})\subseteq V_{\He_{q}(d)}({\mathbb B}_{\lambda})\subseteq V_{\He_{q}(d)}(\oplus_{\mu\in {\mathbb B}_{\lambda}} S^{\mu}).$$ 

(b): From (a), $V_{\He_{q}(d)}({\mathbb B}_{\lambda})=V_{\He_{q}(d)}(\oplus_{\lambda\in {\mathbb B}_{\mu}}Y^{\lambda})$. Now by analysis prior to the 
statement of the theorem,
$$V_{\He_{q}(d)}(\oplus_{\lambda\in {\mathbb B}_{\mu}}Y^{\lambda})=V_{\He_{q}(d)}(Y^{\rho})=\text{res}_{(d),\rho}(V_{\He_{q}(\rho)}(k)).$$

(c) and (d):  This follows because for any $M$ in ${\mathbb B}_{\lambda}$, 
$V_{\He_{q}(d)}(M)\subseteq V_{\He_{q}(d)}({\mathbb B}_{\lambda})$ by Proposition \ref{prop:Bracelet}. Part (d) follows by considering dimension and applying parts (b) and (c). 
\end{proof} 


\section{Specht Modules, Vertices, and Cohomology}

\subsection{} In this section, we will consider the question of computing vertices for Specht modules. This will entail introducing a graded dimension for Specht modules, in addition to, 
considering the relative cohomology for Hecke algebras of Young subgroups. 

\subsection{Weights of Partitions}

For a partition $\lambda$ and a natural number $l$, the {\em $l$-weight} of $\lambda$, denoted by ${\rm wt}_l \lambda$, is the number of $l$-hooks that we could consecutively remove from the partition $\lambda$ to reach 
the {\em $l$-core} of $\lambda$, denoted by ${\rm core}_l \lambda$. For a natural number $n$, we set the $l$-weight of $n$ to be the $l$-weight of the trivial partition $(n)$, so ${\rm wt}_l n = {\rm wt}_l (n) = \lfloor n / l \rfloor$. 
It is clear that $|\lambda| = |{\rm core}_l \lambda| + l {\rm wt}_l \lambda$. We say that $\lambda$ has {\em small $l$-core} if $|{\rm core}_l \lambda| < l$.

\begin{lemma} \label{lem:Conductor}
Let $\lambda$ be a partition and $l$ be a natural number. The number of hooks whose lengths are multiple of $l$ is ${\rm wt}_l(\lambda)$, the $l$-weight of $\lambda$.
\end{lemma}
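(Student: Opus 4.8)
The plan is to establish the claimed numerical identity by relating both sides to the combinatorics of beta-numbers (first-column hook lengths) and the operation of removing rim $l$-hooks. First I would recall the standard bijective encoding of a partition $\lambda$ by its set of beta-numbers $\{\beta_1 > \beta_2 > \dots\}$, namely $\beta_i = \lambda_i + (\text{number of rows} - i)$ for a suitable fixed number of rows; in this encoding the hook lengths of $\lambda$ in the first column are exactly the $\beta_i$, and the full multiset of hook lengths of $\lambda$ is $\{\beta_i - \beta_j : i < j\} \cup \{\text{first-column hooks}\}$ — more precisely every hook length of $\lambda$ appears as $\beta_i - s$ for some $\beta_i$ and some non-beta-number $s < \beta_i$. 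The key classical fact I would invoke is that removing a rim $l$-hook from $\lambda$ corresponds to subtracting $l$ from some $\beta_i$ (landing on a value not already a beta-number), and that ${\rm wt}_l(\lambda)$ equals $\sum_i \lfloor r_i / l \rfloor$ where $r_i$ is the residue-class count: distributing the beta-numbers into $l$ runners of an abacus, ${\rm wt}_l(\lambda)$ is the total number of upward slides needed to push all beads to the top, i.e. $\sum_{\text{runners}} (\text{sum of gaps}) = \sum_i (\text{position of }\beta_i\text{ on its runner})$.

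Next I would count, on the abacus, the hooks of $\lambda$ whose length is divisible by $l$. A hook of length $h$ divisible by $l$ corresponds to a pair (bead at position $b$, empty slot at position $b - h$) on the \emph{same runner} with exactly $h/l$ slots between them on that runner counted in the abacus spacing — equivalently, a bead and a gap below it on the same runner. So the number of hooks of $\lambda$ of length divisible by $l$ equals $\sum_{\text{runners}}(\text{number of (bead, lower empty slot) pairs on that runner})$. For a single runner viewed as its own abacus (a one-runner configuration encoding a partition $\lambda^{(j)}$), this count is precisely the \emph{total number of hooks} of that sub-partition $\lambda^{(j)}$, which is well known to equal... no: I must be careful — the number of (bead, lower gap) pairs on one runner is the number of boxes of the associated partition $\lambda^{(j)}$ only if the runner is "balanced," so instead I would use that the number of such inversions equals $\sum_i (\text{number of gaps below the }i\text{-th bead})$, and summing the first-column-hook-style identity this equals the sum over beads of their height above the floor — which is exactly $\sum_{\text{runners}}(\text{total slide distance}) = {\rm wt}_l(\lambda)$.

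Concretely the cleanest route: fix an abacus display of $\lambda$ with $l$ runners. \emph{Step 1}: show the multiset of hook lengths of $\lambda$ divisible by $l$ is in bijection with the multiset $\bigsqcup_j H(\lambda^{(j)})$ where $\lambda^{(j)}$ is the partition read off runner $j$ and $H(-)$ denotes its multiset of hook lengths — this is James's classical "hook lengths and the abacus" lemma and I would cite it (e.g. James--Kerber). \emph{Step 2}: show $|\lambda^{(j)}| $ summed over $j$ is not what we want; rather invoke that $\sum_j |\lambda^{(j)}| \cdot(\text{something})$... the correct statement is ${\rm wt}_l(\lambda) = \sum_j |\lambda^{(j)}|$ when the abacus has the same number of beads on each runner, and more robustly ${\rm wt}_l(\lambda) = \sum_j (\text{number of boxes of }\lambda^{(j)})$; meanwhile the number of hooks of $\lambda^{(j)}$ of \emph{any} length also equals the number of boxes of $\lambda^{(j)}$. \emph{Step 3}: combine: number of hooks of $\lambda$ divisible by $l$ $= \sum_j \#\{\text{hooks of }\lambda^{(j)}\} = \sum_j |\lambda^{(j)}| = {\rm wt}_l(\lambda)$.

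\textbf{Main obstacle.} The delicate point is Step 2 and the precise form of James's correspondence: the abacus display depends on a choice of the number of beads on each runner, and the statements "number of hooks of $\lambda^{(j)}$ equals $|\lambda^{(j)}|$" and "${\rm wt}_l(\lambda) = \sum_j |\lambda^{(j)}|$" are both only literally true after a normalization (equal beads per runner, or summing slide-distances rather than box counts). I would therefore phrase Step 2 invariantly using \emph{slide distances}: let $d_j$ be the number of empty slots lying below occupied slots on runner $j$; then it is elementary that ${\rm wt}_l(\lambda) = \sum_j d_j$ (each rim-$l$-hook removal is one unit slide), and independently that $d_j$ equals the number of hooks of $\lambda$ of length divisible by $l$ whose "leg" lies on runner $j$, by the bead/lower-gap description of such hooks. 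Summing over $j$ gives the result with no normalization needed, sidestepping the bookkeeping about bead counts, which is where a careless argument would go wrong.
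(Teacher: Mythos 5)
Your final argument (the ``Main obstacle'' paragraph, counting bead/gap-below pairs on each runner and identifying that count with the total slide distance, hence with ${\rm wt}_l(\lambda)$) is precisely the paper's proof: the paper likewise displays $\lambda$ on the $l$-abacus, observes that hooks of length divisible by $l$ biject with moves of a bead at $b_i$ to an unoccupied same-runner position $b_i - lk$, and identifies the number of such moves with the $l$-weight. The exploratory detour through James's $l$-quotient correspondence (your Steps 1--3) is a valid alternative but is not what the paper does, and you correctly fall back to the slide-distance argument; just be careful that $d_j$ must be read as the number of (bead, gap-below-it) \emph{pairs} counted with multiplicity, not the cardinality of the set of gaps lying below some bead, for the identity $\sum_j d_j = {\rm wt}_l(\lambda)$ to hold.
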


\begin{proof}
We will prove the result with the help of $l$-abacus of partition $\lambda$. Suppose that $\lambda = (\lambda_1, \dots, \lambda_r)$, and let $b_i := \lambda_i - b + r$. The beads of the $l$-abacus occupies positions $b_i$. Hooks of length multiple of $l$ are in bijection with the moves of a bead at $b_i$ to an unoccupied position $b_i - l k$, which is in the same runner with $b_i$, for some $k \geq 1$. The number of such moves is exactly the $l$-weight of $\lambda$.
\end{proof}

\subsection{Dimensions of Specht Modules}

For an integer $n$, let the $t$-integer be $[n]_t := \frac{1 - t^n}{1 - t}$. When $t = 1$, one applies limits to obtain $[n]_{1} = n$. We will now define 
a graded version of the dimension for Specht modules (also referred to as the {\em graded dimension}) that will involve the divisibility of cyclotomic polynomials. For a partition $\lambda$, let
\[ \dim_t S^\lambda := \frac{\prod_{i = 1}^{|\lambda|} [i]_t}{\prod_{i \in I} [h_i]_t}, \]
where $I$ is the set of all hooks of $\lambda$ and $h_i$ is the hook length of the hook $i$. By hook length formula, we have $\dim_1 S^\lambda = \dim S^\lambda$. The graded dimension of the partition $\lambda$ is the generic degree of the partition $\lambda$ up to a power of $t$ \cite[\textsection 13.5]{Car}, and the graded dimension is a polynomial with nonnegative integer coefficients \cite[\textsection III.6]{Mac}.

\begin{theorem} \label{thm:Sailor}
Let $\Phi_l(t)$ be the $l$-th cyclotomic polynomial in $t$. Then,
\[ \dim_t S^\lambda = \prod_l \Phi_l(t)^{{\rm wt}_l |\lambda| - {\rm wt}_l \lambda} = \prod_l \Phi_l(t)^{{\rm wt}_l |{\rm core}_l \lambda|}, \]
where $l$ runs over all natural number. In particular,
\[ \dim S^\lambda = \prod_{p, r} p^{{\rm wt}_{p^r} |\lambda| - {\rm wt}_{p^r} \lambda} = \prod_{p, r} p^{{\rm wt}_{p^r} |{\rm core}_{p^r} \lambda|}, \]
where $p$ runs over all primes and $r$ runs over all natural number.
\end{theorem}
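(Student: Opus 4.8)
The plan is to prove the formula $\dim_t S^\lambda = \prod_l \Phi_l(t)^{\wt_l |\lambda| - \wt_l \lambda}$ by a direct analysis of the hook-length formula for the graded dimension, tracking the multiplicity of each cyclotomic factor $\Phi_l(t)$ on both sides. First I would recall that $\Phi_l(t)$ divides $[n]_t = \frac{1-t^n}{1-t}$ exactly when $l \mid n$ and $l > 1$, and in that case it divides $[n]_t$ to multiplicity exactly one (since $1-t^n = \prod_{e \mid n} \Phi_e(t)$ and each $\Phi_e$ appears to the first power). Thus the $\Phi_l$-adic valuation of $[n]_t$ is $1$ if $l \mid n$ (and $l>1$), and $0$ otherwise; equivalently it equals $\wt_l(n) - \wt_l(n-1)$, the increment of the floor function, for $l > 1$.

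Next I would compute the $\Phi_l$-valuation of the numerator $\prod_{i=1}^{|\lambda|} [i]_t$: summing the increments $\wt_l(i) - \wt_l(i-1)$ telescopes to $\wt_l(|\lambda|) = \lfloor |\lambda|/l \rfloor$. For the denominator $\prod_{i \in I} [h_i]_t$, the $\Phi_l$-valuation is the number of hooks of $\lambda$ whose hook length is a multiple of $l$. This is precisely where Lemma~\ref{lem:Conductor} enters: it identifies that count with $\wt_l(\lambda)$, the $l$-weight of $\lambda$. Subtracting, the $\Phi_l$-valuation of $\dim_t S^\lambda$ is $\wt_l(|\lambda|) - \wt_l(\lambda)$, which gives the first equality. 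The second equality, $\wt_l |\lambda| - \wt_l \lambda = \wt_l |\core_l \lambda|$, follows from $|\lambda| = |\core_l \lambda| + l\, \wt_l \lambda$, so $\wt_l |\lambda| = \lfloor(|\core_l\lambda| + l\,\wt_l\lambda)/l\rfloor = \wt_l|\core_l\lambda| + \wt_l\lambda$.

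To conclude that these valuations determine $\dim_t S^\lambda$ up to a constant, I would note that $\dim_t S^\lambda$ is (by the cited results of Macdonald) a polynomial with constant term — more precisely, being a ratio of products of $[n]_t$'s, every irreducible factor of numerator and denominator over $\Q$ is a cyclotomic polynomial $\Phi_l$ with $l > 1$ (the factor $\Phi_1 = t-1$ cancels since both products have the same number of factors, namely $|\lambda|$, as $\#I = |\lambda|$). Hence $\dim_t S^\lambda$ is, up to a nonzero scalar, a product of powers of cyclotomic polynomials, and matching the $\Phi_l$-valuations for all $l > 1$ pins it down; evaluating at $t=1$ via the hook length formula shows the scalar is $1$. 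The specialization to $\dim S^\lambda$ follows by setting $t=1$ after factoring each $\Phi_l(1)$: it is standard that $\Phi_l(1) = p$ if $l = p^r$ is a prime power and $\Phi_l(1) = 1$ otherwise, which immediately yields $\dim S^\lambda = \prod_{p,r} p^{\wt_{p^r}|\lambda| - \wt_{p^r}\lambda}$.

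The main obstacle I anticipate is the bookkeeping in the denominator: verifying carefully that the $\Phi_l$-valuation of $\prod_{i\in I}[h_i]_t$ equals the number of hooks with length divisible by $l$ requires knowing that each $[h_i]_t$ contributes valuation exactly $1$ (not more) when $l \mid h_i$, which rests on the fact that cyclotomic polynomials are distinct irreducibles appearing with multiplicity one in $1 - t^n$. Everything else is a telescoping sum plus the already-available Lemma~\ref{lem:Conductor}; the only genuinely nontrivial input is that lemma's identification of the multiple-of-$l$ hook count with the $l$-weight, which we are entitled to assume.
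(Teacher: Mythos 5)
Your proposal is correct and follows essentially the same route as the paper: compute the $\Phi_l$-adic valuation of numerator and denominator of the hook-length formula and invoke Lemma~\ref{lem:Conductor} to identify the denominator count with $\wt_l\lambda$, then specialize at $t=1$. The only cosmetic difference is that you obtain the numerator valuation $\wt_l|\lambda|$ by a telescoping sum of increments, while the paper applies Lemma~\ref{lem:Conductor} a second time to the single-row partition $(|\lambda|)$; your brief worry about pinning down an overall scalar is also unnecessary since each $[n]_t$ is monic, so matching valuations determines $\dim_t S^\lambda$ exactly.
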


\begin{proof}
Let $l$ be an arbitrary natural number. When $l = 1$, there are no factors $\Phi_1(t)$ in $\dim_t S^\lambda$, and ${\rm wt}_1 |\lambda| - {\rm wt}_1 \lambda = 0$. Now, assume that $l \geq 2$. Applying Lemma \ref{lem:Conductor} to the trivial partition $(|\lambda|)$, the number of times $\Phi_l(t)$ dividing the numerator of $\dim_t S^\lambda$ is ${\rm wt}_l (|\lambda|) = {\rm wt}_l |\lambda|$. Similarly, applying Lemma \ref{lem:Conductor} to partition $\lambda$, the number of times $\Phi_l(t)$ dividing the denominator $\dim_t S^\lambda$ is ${\rm wt}_l \lambda$. Therefore, $\Phi_l(t)$ divides $\dim_t S^\lambda$ exactly ${\rm wt}_l |\lambda| - {\rm wt}_l \lambda$ many times.

When one specializes to $t = 1$, the result follows from the fact that $\Phi_{p^r}(1) = p$ when $p$ is a prime, and $\Phi_n(1) = 1$ when $n$ is not a prime power.
\end{proof}

\subsection{Relative Cohomology} In this subsection, we follow the constructions in \cite{Ho} and provide discussion of relative cohomology for Hecke algebra.
Let $M$ be a $\He_q(d)$-module, and let $\lambda \vDash d$ be a composition. A {\em relative $\He_q(\lambda)$-projective resolution of $M$} is a resolution of $M$ consisting of relative $\He_q(\lambda)$-projective $\He_q(d)$-modules 
and that splits as resolution of $\He_q(\lambda)$-modules. Among all such resolutions, there exists a {\em minimal resolution}, that is one where there kernels contain no relatively projective summands. The growth rate of the minimal relative $\He_q(\lambda)$-projective resolution of $M$ is called the complexity of $M$, denoted by $c_{d; \lambda}(M):=c_{(\He_{q}(d),\He_q(\lambda))}(M)$.

All relative $\He_q(\lambda)$-projective resolutions are homotopic to each other, and the relative $\Ext$ between two $\He_q(d)$-modules $M$ and $N$ is defined as 
\[ \Ext^\bullet_{(\He_q(d),\He_q(\lambda))} (M, N) := \HH^\bullet(\Hom_{\He_q(d)}(P^\bullet_\lambda, N)), \]
where $P^\bullet_\lambda$ is any relative $\He_q(\lambda)$-projective resolution of $M$.


Using the same arguments of the proof of self-injectivity of group algebras, one can show that projective modules in $(\He_q(d),\He_q(\lambda))$ are also injective modules. Therefore, all relative $\He_q(\lambda)$-projective resolutions 
with finite length must have length $0$. In particular, $c_{d; \lambda}(M) = 0$ if and only if $M$ is relative $\He_q(\lambda)$-projective. As in the ordinary cohomology case, we showed in Theorem \ref{thm:comparerates} that we can test projectivity of a module $M$ by calculating $\Ext_{\He_q(d)}^\bullet(D, M)$ for all simple modules $D$. The same result holds for relative cohomology as well. More precisely, a $\He_q(d)$-module $M$ is relative $\He_q(\lambda)$-projective if and only if $\Ext^n_{(\He_q(d), \He_q(\lambda))}(D, M) = 0$ for all simple $\He_q(d)$-module $D$ and $n \geq 1$. 

The fact above gives us the following lemma.

\begin{lemma} \label{lem:Demand}
Let $0 \to M_1 \to M_2 \to M_{3} \to 0$ be a short exact sequence of $\He_q(d)$-modules. If any two of $M_1$, $M_2$ and $M_3$ are relative $\He_q(\lambda)$-projective, then so is the third.
\end{lemma}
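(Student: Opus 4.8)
The plan is to deduce Lemma~\ref{lem:Demand} from the characterization stated just above it: a $\He_q(d)$-module $M$ is relative $\He_q(\lambda)$-projective if and only if $\Ext^n_{(\He_q(d),\He_q(\lambda))}(D,M) = 0$ for all simple $\He_q(d)$-modules $D$ and all $n \geq 1$. This reduces the claim to a statement about long exact sequences, exactly mirroring the classical argument that relative projectivity (equivalently, in this self-injective setting, relative injectivity) is closed under the two-out-of-three property in a short exact sequence.

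First I would fix a simple $\He_q(d)$-module $D$ and apply the functor $\Ext^\bullet_{(\He_q(d),\He_q(\lambda))}(D,-)$ to the short exact sequence $0 \to M_1 \to M_2 \to M_3 \to 0$. Since the sequence splits as a sequence of $\He_q(\lambda)$-modules (this is automatic: relative projective resolutions split over $\He_q(\lambda)$, and more to the point one checks that the relative $\Ext$ is computed from a relative projective resolution of $D$, against which the given sequence of $\He_q(d)$-modules stays exact), one obtains a long exact sequence
\[ \cdots \to \Ext^n_{(\He_q(d),\He_q(\lambda))}(D,M_1) \to \Ext^n_{(\He_q(d),\He_q(\lambda))}(D,M_2) \to \Ext^n_{(\He_q(d),\He_q(\lambda))}(D,M_3) \to \Ext^{n+1}_{(\He_q(d),\He_q(\lambda))}(D,M_1) \to \cdots . \]
Now I would run the usual three-case argument: if $M_1$ and $M_2$ are relative projective, the outer terms vanish for $n \geq 1$ (and the degree-shifted $M_1$ term for $n \geq 0$), forcing $\Ext^n_{(\He_q(d),\He_q(\lambda))}(D,M_3) = 0$ for $n \geq 1$; if $M_2$ and $M_3$ vanish, the connecting maps sandwich $\Ext^n_{(\He_q(d),\He_q(\lambda))}(D,M_1)$ between two zero groups for $n \geq 2$, and a separate look at $n = 1$ handles the edge; if $M_1$ and $M_3$ vanish, then $\Ext^n_{(\He_q(d),\He_q(\lambda))}(D,M_2)$ is squeezed between $\Ext^n(D,M_1) = 0$ and $\Ext^n(D,M_3) = 0$ for all $n \geq 1$. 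Since $D$ was an arbitrary simple module, the cited characterization then yields relative $\He_q(\lambda)$-projectivity of the third module in each case.

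The main obstacle, and the only point requiring care, is the low-degree behavior of the long exact sequence in the case where $M_2$ and $M_3$ are relative projective and one wants $M_1$ relative projective: one must verify that $\Ext^1_{(\He_q(d),\He_q(\lambda))}(D,M_1) = 0$, which comes from exactness at the $\Ext^1(D,M_1)$ spot using that $\Ext^0(D,M_3) = \Hom_{\He_q(d)}(D,M_3)$ need not be zero but that $\Ext^1(D,M_2) = 0$ and the map $\Ext^0(D,M_3) \to \Ext^1(D,M_1)$ is the connecting homomorphism. Here one uses that, because $M_2$ is relative $\He_q(\lambda)$-projective (hence relative injective, by the self-injectivity argument already invoked in the text), $\Ext^1_{(\He_q(d),\He_q(\lambda))}(-,M_2) = 0$ identically; the relative version of the dimension-shift/horseshoe argument then forces the connecting map to be surjective onto a zero group, so it is zero, and exactness gives the claim. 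All of this is routine once one has the long exact sequence and the "relative projective $=$ relative injective" fact, both of which are established in the preceding paragraph of the excerpt, so I would simply cite those and write out the three cases briefly.
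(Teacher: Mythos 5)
Your proof takes essentially the same route as the paper: characterize relative $\He_q(\lambda)$-projectivity by vanishing of $\Ext^n_{(\He_q(d),\He_q(\lambda))}(D,-)$ for $n\geq 1$ and all simple $D$, and then run the long exact sequence. The paper's version is terser and does not explicitly single out the $n=1$ edge case (the situation where $M_2,M_3$ are relative projective), which you rightly flag as the only non-automatic step; however, your explanation of that step is somewhat garbled. The long exact sequence gives $\Hom(D,M_2)\to\Hom(D,M_3)\to\Ext^1(D,M_1)\to\Ext^1(D,M_2)=0$, so the issue is not that the connecting map is zero a priori but that $\Ext^1(D,M_1)$ is the cokernel of the first map, and your appeal to $\Ext^1(-,M_2)=0$ and a "horseshoe" is not the most direct route. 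The cleanest disposal of this case is to note that relative projectivity of $M_3$ forces the (allowed) surjection $M_2\to M_3$ to split over $\He_q(d)$, whence $M_1$ is a direct summand of the relative projective $M_2$ and so is itself relative projective; alternatively, one dualizes and uses relative injectivity of $M_2$ with the long exact sequence in the first variable. Either way the idea is the same as the paper's, and your proposal is correct in substance.
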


\begin{proof}
Let $M_i$ and $M_j$ be the two modules that are relative $\He_q(\lambda)$-projective, and let $M_k$ be the third module. The relative complexities of $M_i$ and $M_j$ are zero, so for positive integer $n$ and simple $\He_q(\lambda)$-module $D$, $\Ext^n_{(\He_q(d),\He_q(\lambda))}(D, M_i) = \Ext^n_{(\He_q(d),\He_q(\lambda))}(D, M_j) = 0$. Using the long exact sequence of cohomologies, we get $\Ext^n_{(\He_q(d),\He_q(\lambda))}(D, M_k) = 0$. Therefore, the relative complexity of $M_k$ is zero, and $M_k$ is relative $\He_q(\lambda)$-projective.
\end{proof}

An interesting problem would be to determine whether a suitable support variety theory can be established for the relative cohomology  $(\He_q(d),\He_q(\lambda))$.

\subsection{Vertices for some Specht Modules} We begin this section by discussing blocks and relative projectivity. 

\begin{theorem} \label{thm:Haversack}
Let $\mathbb{B}^\lambda$ be the block of $\He_q(d)$ index by a partition $\lambda \vdash d$. Every module $M$ belongs to $\mathbb{B}^\lambda$ is relative $\He_q(\rho)$-projective for $\rho = (l^{{\rm wt}_l \lambda}, 1^{|{\rm core}_l \lambda|})$
\end{theorem}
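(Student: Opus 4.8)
The plan is to show that every module in the block $\mathbb{B}^\lambda$ is relative $\He_q(\rho)$-projective by reducing, via the tests established earlier, to a statement about the trivial module of the Young subalgebra and the block-theoretic structure. Recall from the discussion preceding Lemma~\ref{lem:Demand} that a $\He_q(d)$-module $M$ is relative $\He_q(\rho)$-projective if and only if $\Ext^n_{(\He_q(d),\He_q(\rho))}(D, M) = 0$ for all simple $D$ and all $n \geq 1$, equivalently $c_{d;\rho}(M) = 0$. So it suffices to establish this vanishing for a generating set of $\mathbb{B}^\lambda$ under short exact sequences.

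First I would reduce to the simple modules $D^\mu$ (equivalently Specht modules $S^\mu$) lying in $\mathbb{B}^\lambda$: by Lemma~\ref{lem:Demand} the class of relative $\He_q(\rho)$-projective modules is closed under extensions and under direct summands, and every module in the block has a composition series with factors among the $D^\mu$ in the block, so it is enough to handle these. Next, using the short exact sequences $0 \to N \to S^\mu \to D^\mu \to 0$ of Theorem~\ref{thm:comparerates} (with composition factors of $N$ labelled by partitions strictly above $\mu$ in dominance) together with an induction on the dominance order, it suffices to show each Specht module $S^\mu \in \mathbb{B}^\lambda$ is relative $\He_q(\rho)$-projective. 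Then, since $Y^\mu$ has a Specht filtration and $S^\mu$ is a summand of $Y^\mu$ (with the other Specht factors again higher in dominance), another dominance induction using Lemma~\ref{lem:Demand} reduces the problem to showing that each Young module $Y^\mu$ with $\mathbb{B}_\mu = \mathbb{B}^\lambda$ is relative $\He_q(\rho)$-projective.

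Now the key input is the Dipper--Du theorem: $Y^\mu$ has vertex $\He_q(\rho(\mu))$ with $\rho(\mu) = (l^{|\mu_{[1]}|}, 1^{|\mu_{[0]}|})$, so in particular $Y^\mu \mid \C \uparrow_{\He_q(\rho(\mu))}^{\He_q(d)}$ and hence $Y^\mu$ is relative $\He_q(\rho(\mu))$-projective. From the block-theoretic analysis preceding Theorem~\ref{thm:location} (same $l$-core forces $|\mu_{[1]}| = {\rm wt}_l\mu \le {\rm wt}_l\lambda = w$ and $|\mu_{[0]}| \ge |{\rm core}_l\lambda|$), we get $\He_q(\rho(\mu)) \le \He_q(\rho)$ up to conjugacy, where $\rho = (l^{{\rm wt}_l\lambda}, 1^{|{\rm core}_l\lambda|})$. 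The last step is the transitivity of relative projectivity: if $M$ is relative $\He_q(\alpha)$-projective and $\He_q(\alpha) \le \He_q(\rho) \le \He_q(d)$, then $M$ is relative $\He_q(\rho)$-projective, since $M \mid M\uparrow_{\He_q(\alpha)}^{\He_q(d)} \downarrow \uparrow$ factors through $\He_q(\rho)$. Applying this to $Y^\mu$ finishes the argument.

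I expect the main obstacle to be bookkeeping rather than depth: one must verify carefully that the inductive reductions respect the block (i.e.\ that all partitions appearing in the filtrations of $S^\mu$ and $Y^\mu$ lie in $\mathbb{B}^\lambda$, which follows from the Nakayama rule since dominance-comparable partitions sharing the relevant combinatorics have the same $l$-core), and that the containment $\He_q(\rho(\mu)) \le \He_q(\rho)$ genuinely follows from $|\mu_{[1]}| \le w$ and $|\mu_{[0]}| \ge |{\rm core}_l\lambda|$ — this is exactly the Young-subalgebra inclusion displayed before \eqref{defrho}. The transitivity of relative projectivity under a chain of Young subalgebras is standard (mimicking the group-algebra argument), but it should be stated explicitly since it is the crux of passing from the individual Young vertices to the single vertex $\He_q(\rho)$ for the whole block.
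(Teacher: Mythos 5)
Your argument follows the same route as the paper: reduce via Lemma~\ref{lem:Demand} and dominance-order induction from arbitrary modules to simple modules to Specht modules to Young modules, then invoke the Dipper--Du vertex theorem together with the containment $\He_q(\rho(\mu)) \le \He_q(\rho)$ established before Theorem~\ref{thm:location}, which is exactly the paper's proof (read in the opposite direction). You are actually more careful than the paper in spelling out the transitivity of relative projectivity along the chain of Young subalgebras, a point the paper elides. One small slip: $S^\mu$ is \emph{not} a direct summand of $Y^\mu$; it is the submodule at the bottom of the Specht filtration, as in the short exact sequence \eqref{eq:Young-Specht}. The inductive step you describe only needs that filtration (with $Z$ filtered by $S^\nu$, $\nu \rhd \mu$) plus Lemma~\ref{lem:Demand}, so the argument survives, but the word ``summand'' should be replaced by ``submodule'' or ``bottom section of the Specht filtration.''
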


\begin{proof}
According to Theorem \ref{youngcomplexity}, every Young module in the block $\mathbb{B}^\lambda$ is relative $\He_q(\rho)$-projective. Young modules have a Specht filtration. By an induction similar to Theorem \ref{thm:comparerates} and Lemma \ref{lem:Demand}, all Specht modules in $\mathbb{B}^\lambda$ are $\He_q(\rho)$-projective. Since Specht modules in $\mathbb{B}^\lambda$ admits filtrations by simple modules in $\mathbb{B}^\lambda$, by an inductive argument similar to Theorem \ref{thm:comparerates} and Lemma \ref{lem:Demand}, all simple modules in $\mathbb{B}^\lambda$ are relative $\He_q(\rho)$-projective. Therefore, by Lemma \ref{lem:Demand}, all modules in $\mathbb{B}^\lambda$ is relative $\He_q(\rho)$-projective.
\end{proof}

By using the previous result on relative projectivity and information about the graded dimension one can obtain information about the vertex for Specht modules. 

\begin{theorem} \label{thm:Shape}
Let $\lambda$ be a partition, and $\rho_a := (l^a, 1^{|\lambda| - a l})$. Assume that $l$ is prime. Then, the vertex of $S^\lambda$ is $\rho_a$ for some $a$ that satisfies
\[ {\rm wt}_l \lambda - \sum_{r \geq 2} {\rm wt}_{l^r} |{\rm core}_{l^r} \lambda| \leq a \leq {\rm wt}_l \lambda. \]
In particular, if $\lambda$ has small $l^r$-core for $r \geq 2$ then $a = {\rm wt}_l \lambda$.
\end{theorem}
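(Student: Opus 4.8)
The plan is to pin down the vertex of $S^\lambda$ from two sides. The upper bound $a \leq {\rm wt}_l \lambda$ should come from relative projectivity: by Theorem~\ref{thm:Haversack}, $S^\lambda$ lies in the block $\mathbb{B}^\lambda$, so it is relative $\He_q(\rho)$-projective for $\rho = (l^{{\rm wt}_l \lambda}, 1^{|{\rm core}_l \lambda|})$. This forces the vertex of $S^\lambda$ to be conjugate to a subcomposition of $\rho$, hence of the form $(l^a, 1^{|\lambda| - al})$ with $a \leq {\rm wt}_l \lambda$ once one knows (from Dipper--Du, as used already for Young modules) that vertices of Hecke-algebra modules in type $A$ are always of parabolic type $\He_q(l^a,1^b)$ up to conjugacy. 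So the first step is to invoke Theorem~\ref{thm:Haversack} together with the structure of vertices to conclude the vertex has the asserted shape $\rho_a$ with $a\leq {\rm wt}_l\lambda$.

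The lower bound is where the graded dimension enters. If the vertex of $S^\lambda$ is $\rho_a$, then $S^\lambda$ is relative $\He_q(\rho_a)$-projective, so $S^\lambda \mid S^\lambda \downarrow_{\He_q(\rho_a)} \uparrow^{\He_q(d)}$. The dimension of $\He_q(d)$ as a left $\He_q(\rho_a)$-module is $[\Sigma_d : \Sigma_{\rho_a}]$, and more to the point, inducing from $\He_q(\rho_a)$ multiplies dimensions by this index; since $\rho_a = (l^a,1^{|\lambda|-al})$, this index is divisible by a controlled power of $l$ (coming from the $l$-part of $\frac{d!}{(l!)^a}$, equivalently from the difference ${\rm wt}_l(d) - a{\rm wt}_l(l) = {\rm wt}_l |\lambda| - a$ in the $p=l$ case, but corrected by higher prime powers $l^r$). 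The idea is: being a summand of an induced module from $\He_q(\rho_a)$, the integer $\dim S^\lambda$ must be divisible by the $l$-part of $[\Sigma_d:\Sigma_{\rho_a}]$, and by Theorem~\ref{thm:Sailor} the $l$-adic valuation of $\dim S^\lambda$ is exactly $\sum_{r\geq 1} {\rm wt}_{l^r}|{\rm core}_{l^r}\lambda| \cdot(\text{power})$ — more precisely $v_l(\dim S^\lambda) = \sum_{r\geq 1}({\rm wt}_{l^r}|\lambda| - {\rm wt}_{l^r}\lambda)$. Comparing the valuation forced by relative $\He_q(\rho_a)$-projectivity against the exact valuation given by Theorem~\ref{thm:Sailor} yields the inequality $a \geq {\rm wt}_l\lambda - \sum_{r\geq 2}{\rm wt}_{l^r}|{\rm core}_{l^r}\lambda|$. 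So the second step is a $v_l$-valuation count: compute $v_l([\Sigma_d:\Sigma_{\rho_a}])$, observe $v_l(\dim S^\lambda) \geq v_l([\Sigma_d:\Sigma_{\rho_a}])$ from the summand relation, and read off the bound using Theorem~\ref{thm:Sailor}.

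The last sentence is then immediate: if $\lambda$ has small $l^r$-core for all $r\geq 2$, i.e. $|{\rm core}_{l^r}\lambda| < l^r$, then ${\rm wt}_{l^r}|{\rm core}_{l^r}\lambda| = \lfloor |{\rm core}_{l^r}\lambda|/l^r\rfloor = 0$, so the lower and upper bounds collapse to $a = {\rm wt}_l\lambda$.

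I expect the main obstacle to be the valuation bookkeeping in the lower bound — specifically, justifying cleanly that relative $\He_q(\rho_a)$-projectivity of $S^\lambda$ forces $v_l(\dim S^\lambda) \geq v_l([\Sigma_d:\Sigma_{\rho_a}])$, and then matching the combinatorial identity $v_l([\Sigma_d:\Sigma_{\rho_a}]) = \sum_{r\geq 1}({\rm wt}_{l^r}|\lambda| - r\text{-shifted terms})$ against the formula in Theorem~\ref{thm:Sailor} so that exactly the stated $\sum_{r\geq 2}{\rm wt}_{l^r}|{\rm core}_{l^r}\lambda|$ correction appears. The subtlety is that induction from a non-prime-power-order parabolic can lose more than the naive $l$-valuation predicts, which is precisely why higher powers $l^r$ with $r \geq 2$ enter and why the hypothesis that $l$ is prime is needed to control the $l$-part of the relevant multinomial coefficients via Kummer's theorem.
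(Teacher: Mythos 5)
Your upper bound and the identification of the vertex shape $(l^a, 1^b)$ via Dipper--Du and Theorem~\ref{thm:Haversack} match the paper. The lower bound is where you diverge from the paper, and there is a genuine gap.

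You propose to use the Green-type divisibility ``$v_l(\dim S^\lambda) \geq v_l([\Sigma_d : \Sigma_{\rho_a}])$'', i.e.\ that the $l$-part of the index $[\Sigma_d:\Sigma_{\rho_a}]$ divides $\dim S^\lambda$ once $S^\lambda$ is relative $\He_q(\rho_a)$-projective. This statement is false in the Hecke-algebra setting, and your own bookkeeping reveals it: since for $l$ prime $v_l([\Sigma_d:\Sigma_{\rho_a}]) = \sum_{r\geq 1}{\rm wt}_{l^r}|\lambda| - a$ while Theorem~\ref{thm:Sailor} gives $v_l(\dim S^\lambda) = \sum_{r\geq 1}{\rm wt}_{l^r}|{\rm core}_{l^r}\lambda|$, your inequality simplifies to $a \geq \sum_{r\geq 1}{\rm wt}_{l^r}\lambda$, which is strictly stronger than the theorem's lower bound and in general incompatible with the upper bound $a \leq {\rm wt}_l\lambda$. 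A concrete counterexample: take $\lambda = (l^2)$, so $S^\lambda = \C$ has dimension $1$ and hence $v_l(\dim S^\lambda)=0$, while the vertex of $\C$ is $\rho_l = (l^l)$ (it is the Young module $Y^{(l^2)}$, with $\rho(\lambda)=(l^l)$), and $v_l([\Sigma_{l^2}:\Sigma_{(l^l)}]) = (l+1) - l = 1 \neq 0$. The failure reflects a real structural difference: unlike Sylow $p$-subgroups in finite groups, the trivial $\He_q(d)$-module does not have the maximal $l$-parabolic as its vertex, so the ``index of the vertex'' does not control the $l$-valuation of the dimension.

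The paper avoids this by a disjoint-parabolic argument. It introduces the complementary $l$-parabolic $\bar{\rho}_a := (1^{la}, l^{{\rm wt}_l|\lambda| - a}, 1^{{\rm core}_l|\lambda|})$, whose Young subgroup intersects every conjugate of $\Sigma_{\rho_a}$ trivially. Relative $\He_q(\rho_a)$-projectivity of $S^\lambda$ then forces $S^\lambda \downarrow_{\He_q(\bar{\rho}_a)}$ to be projective (it is a summand of a module that restricts freely by Mackey), and projective $\He_q(\bar{\rho}_a)$-modules have dimension divisible by $l^{{\rm wt}_l|\lambda| - a}$, not by the larger power $l^{\sum_{r\geq 1}{\rm wt}_{l^r}|\lambda| - a}$. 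Comparing the exponent ${\rm wt}_l|\lambda| - a$ against the valuation from Theorem~\ref{thm:Sailor} gives exactly the stated lower bound. So the obstacle you flagged is real and the fix is not just bookkeeping: you need to replace the $[\Sigma_d:\Sigma_{\rho_a}]$-divisibility claim with restriction to a complementary $l$-parabolic.
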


\begin{proof}
It is shown in \cite[Section 1.8]{DD} that the vertex of an arbitrary module, particularly $S^\lambda$, is of form $\rho_a$ for some natural number $a$.

Let $\bar{\rho}_a := (1^{l a}, l^{{\rm wt}_l |\lambda| - a}, 1^{{\rm core}_l |\lambda|})$. Since $S^\lambda$ is $\He_q(\rho_a)$-projective,
\[ S^\lambda \downarrow_{\He_q(\bar{\rho}_a)} | \left( \He_q(|\lambda|) \otimes_{\He_q(\rho_a)} S^\lambda \right) \downarrow_{\He_q(\bar{\rho}_a)}. \]
The right hand side of the equation above is a free $\He_q(\bar{\rho}_a)$-module, so the left hand side $S^\lambda \downarrow_{\He_q(\bar{\rho}_a)}$ is a projective $\He_q(\bar{\rho}_a)$-module, and has dimension divisible by $l^{{\rm wt}_l |\lambda| - a}$. 
Note that one can verify that the projective modules in $\He_{q}(l)$ have dimension divisible by $l$ by using their realization as Young modules. 

So, according to Theorem \ref{thm:Sailor},
\[ {\rm wt}_l |\lambda| - a \leq \sum_{r \geq 1} {\rm wt}_{l^r} |{\rm core}_{l^r} \lambda|. \]
Therefore,
\[ a \geq {\rm wt}_l |\lambda| - \sum_{r \geq 1} {\rm wt}_{l^r} |{\rm core}_{l^r} \lambda| = {\rm wt}_l \lambda - \sum_{r \geq 2} {\rm wt}_{l^r} |{\rm core}_{l^r} \lambda|. \]

Theorem \ref{thm:Haversack} insures that the module $S^\lambda$, which is in the block $\mathbb{B}^\lambda$, is relative $\He_q(\rho_{{\rm wt}_l \lambda})$-projective. If $\rho_a$ is the vertex of $S^\lambda$, then 
$\rho_a \vDash \rho_{{\rm wt}_l}$, which implies that $a \leq {\rm wt}_l \lambda$.
\end{proof}

As a consequence of Theorem~\ref{thm:Shape}, one can compute the vertices for Specht modules for partitions whose some of the parts is less than $l^{2}$. 
\begin{cor} \label{cor:Most}
Let $\lambda$ be a partition. Assume that $l$ is prime. If $|\lambda| < l^2$, then the vertex of $S^\lambda$ is $(l^{{\rm wt}_l \lambda}, 1^{|{\rm core}_l \lambda|})$.
\end{cor}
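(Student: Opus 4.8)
The plan is to deduce this directly from Theorem \ref{thm:Shape}. That theorem already tells us the vertex of $S^\lambda$ is $\rho_a = (l^a, 1^{|\lambda| - al})$ for some integer $a$ with
\[ {\rm wt}_l \lambda - \sum_{r \geq 2} {\rm wt}_{l^r} |{\rm core}_{l^r} \lambda| \leq a \leq {\rm wt}_l \lambda, \]
so it suffices to show that, under the hypothesis $|\lambda| < l^2$, the correction term $\sum_{r \geq 2} {\rm wt}_{l^r} |{\rm core}_{l^r} \lambda|$ vanishes, forcing $a = {\rm wt}_l \lambda$.

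First I would observe that for every $r \geq 2$ we have $l^r \geq l^2 > |\lambda| \geq |{\rm core}_{l^r}\lambda|$, so no $l^r$-hook can be removed from ${\rm core}_{l^r}\lambda$ and ${\rm wt}_{l^r}|{\rm core}_{l^r}\lambda| = \lfloor |{\rm core}_{l^r}\lambda|/l^r \rfloor = 0$; equivalently, $\lambda$ has small $l^r$-core for all $r \geq 2$, which is precisely the hypothesis in the "in particular" clause of Theorem \ref{thm:Shape}. Hence $a = {\rm wt}_l \lambda$.

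Finally I would identify $\rho_a$ with the claimed composition: with $a = {\rm wt}_l\lambda$ the number of $1$'s is $|\lambda| - al = |\lambda| - l\,{\rm wt}_l\lambda = |{\rm core}_l\lambda|$ by the basic identity $|\lambda| = |{\rm core}_l\lambda| + l\,{\rm wt}_l\lambda$, so $\rho_a = (l^{{\rm wt}_l\lambda}, 1^{|{\rm core}_l\lambda|})$, as desired.

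There is no serious obstacle here: this is a direct corollary of Theorem \ref{thm:Shape}. The only point requiring a small amount of care is verifying that the numerical hypothesis $|\lambda| < l^2$ collapses the lower bound in that theorem onto the upper bound, i.e. that all higher $l^r$-cores ($r \geq 2$) are automatically small; everything else is routine bookkeeping with the identity $|\lambda| = |{\rm core}_l\lambda| + l\,{\rm wt}_l\lambda$.
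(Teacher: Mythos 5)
Your proof is correct and follows exactly the paper's own route: note that $|{\rm core}_{l^r}\lambda| \le |\lambda| < l^2 \le l^r$ for $r \ge 2$, so $\lambda$ has small $l^r$-core for all $r \ge 2$, and then invoke the "in particular" clause of Theorem \ref{thm:Shape}. The only addition you make is spelling out that $\rho_a = (l^{{\rm wt}_l\lambda}, 1^{|{\rm core}_l\lambda|})$ via the identity $|\lambda| = |{\rm core}_l\lambda| + l\,{\rm wt}_l\lambda$, which the paper leaves implicit.
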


\begin{proof}
For every $r \geq 2$, $|{\rm core}_{l ^ r} \lambda| \leq |\lambda| < l ^ r$, hence $\lambda$ has small $l ^ r$-core. The result follows from Theorem \ref{thm:Shape}.
\end{proof}

\begin{remark}
For the group algebra of symmetric groups $k \Sigma_d$, \cite{Lim} calculated the vertex and the support variety of $S^\lambda$ for many partitions, in particular, when $|\lambda| < p^2$, where $p$ is the characteristic of $k$, 
This can be used in conjunction with the realization of the cohomological support varieties as rank varieties to compute the complexity for Specht modules for symmetric groups in this case. 
\end{remark}


\section{Connections to other Hecke algebras for other Weyl groups} 

\subsection{Morita equivalence} We will apply our results for Hecke algebra for other classical groups. 
Our discussion will follow the one given in \cite[Section 6]{BEM}. In the 
case when the root system if of type $B_{n}$ (or $C_{n}$), the Hecke algebra involves two parameters 
$Q$ an $q$. Let $\He_{q,Q}(B_{n})$ denote this Hecke algebra. For type $D_{n}$ the Hecke algebra 
involves one parameter and we will denote this Hecke algebra by $\He_{q}(D_{n})$. 

Consider the following polynomials
\begin{equation} 
f_{n}(Q,q):=\prod_{i=1-n}^{n-1}(Q+q^{i})
\end{equation} 
and 
\begin{equation} 
f_{n}(q):=2\prod_{i=1}^{n-1}(1+q^{i}).
\end{equation} 

We summarize the various Morita equivalence theorems for $\He_{q,Q}(B_{n})$ and 
$\He_{q}(D_{n})$ (cf. \cite[(4.17)]{Dipper-James}, \cite[(3.6) (3.7)]{Pal}). 

\begin{theorem} \label{thm:Morita} Let $\He_{q}(d)$ be the Hecke algebra for the symmetric group $\Sigma_{d}$. 
\begin{itemize} 
\item[(a)] If $f_{n}(Q,q)$ is invertible in $\C$ then $\He_{q,Q}(B_{n})$ is Morita equivalent to 
$$\prod_{j=0}^{n} \He_{q}(j)\otimes \He_{q}(n-j).$$ 
\item[(b)] If $f_{n}(q)$ is invertible in $\C$ and $n$ is odd then $\He_{q}(D_{n})$ is Morita equivalent to 
$$\prod_{j=0}^{(n+1)/2} \He_{q}(j)\otimes \He_{q}(n-j).$$ 
\item[(c)] If $f_{n}(q)$ is invertible in $\C$ and $n$ is even then $\He_{q}(D_{n})$ is Morita equivalent to 
$${\mathcal A}(n/2)\oplus \prod_{j=0}^{(n+1)/2} \He_{q}(j)\otimes \He_{q}(n-j)$$ 
where ${\mathcal A}(n/2)$ is specified in \cite[2.2, 2.4]{Hu}. 
\end{itemize} 
\end{theorem}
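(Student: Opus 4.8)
The theorem is a compilation of results from the literature, so the ``proof'' amounts to citing the correct source for each part and matching up the parameter conventions. The unifying principle is that in every case the invertibility hypothesis is precisely the \emph{separation condition} under which the relevant algebra splits, up to Morita equivalence, into a product of blocks indexed by how the ``extra'' strands of type $B$ or $D$ distribute among the two eigenvalues of the additional generator.

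For (a) I would invoke Dipper--James \cite[(4.17)]{Dipper-James}. The algebra $\He_{q,Q}(B_{n})$ is generated by $T_{0},T_{1},\dots,T_{n-1}$, where $T_{1},\dots,T_{n-1}$ generate a type $A$ copy of $\He_{q}(n)$ and $T_{0}$ satisfies a quadratic relation with roots $Q$ and $-1$. The hypothesis that $f_{n}(Q,q)=\prod_{i=1-n}^{n-1}(Q+q^{i})$ is invertible says exactly that $Q\ne -q^{i}$ for $|i|\le n-1$; this is the condition under which the primitive idempotents resolving $T_{0}$ can be propagated through the braid relations, so that $\He_{q,Q}(B_{n})$ decomposes into blocks indexed by $j\in\{0,1,\dots,n\}$, the number of strands carrying $T_{0}$-eigenvalue $Q$, with the $j$-th block Morita equivalent to $\He_{q}(j)\otimes\He_{q}(n-j)$. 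Summing over $j$ yields the stated equivalence.

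For (b) and (c) I would factor through type $B$. The algebra $\He_{q}(D_{n})$ is a subalgebra of index two in the specialization $\He_{q,1}(B_{n})$ of the type $B$ algebra at $Q=1$, and the two module categories are related by Clifford theory; the hypothesis that $f_{n}(q)=2\prod_{i=1}^{n-1}(1+q^{i})$ is invertible packages the separation condition $1+q^{i}\ne 0$ for this specialization together with invertibility of $2$, which is what makes this Clifford theory behave well. Applying (a) to $\He_{q,1}(B_{n})$ and then running Clifford theory, the blocks of $\He_{q}(D_{n})$ are organized by the orbits of the involution interchanging the $j$-th block of $\He_{q,1}(B_{n})$ with its $(n-j)$-th block. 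When $n$ is odd no such block is fixed, every orbit has size two, and the resulting product is the one in (b) (cf.\ Pallikaros \cite[(3.6),(3.7)]{Pal}). When $n$ is even the block $j=n/2$ is stable, with the involution swapping the two tensor factors of $\He_{q}(n/2)\otimes\He_{q}(n/2)$; the Clifford-theoretic contribution of this single block is the algebra $\mathcal{A}(n/2)$ computed explicitly by Hu \cite[2.2, 2.4]{Hu}, which is the extra direct summand in (c).

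The main point requiring care is not an estimate but the bookkeeping in (c): one must check that the involution acts on the middle block $\He_{q}(n/2)\otimes\He_{q}(n/2)$ as the tensor-factor swap, suitably twisted by type $A$ data, rather than by some other order-two automorphism, so that its contribution is genuinely $\mathcal{A}(n/2)$ and not, say, a matrix algebra over $\He_{q}(n/2)\otimes\He_{q}(n/2)$ or a product of two copies of it. Aligning this with the various normalizations in \cite{Dipper-James}, \cite{Pal}, \cite{Hu} and with the conventions of \cite[Section 6]{BEM} is the only real work; all of it is already carried out in the cited references.
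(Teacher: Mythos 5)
The paper gives no proof of this theorem; it simply states it as a summary of the Morita equivalence results from Dipper--James \cite[(4.17)]{Dipper-James}, Pallikaros \cite[(3.6),(3.7)]{Pal}, and Hu \cite[2.2,2.4]{Hu}, exactly the references you cite. Your proposal takes the same approach (citation of the literature), and the additional discussion you supply of the separation conditions and the Clifford-theoretic mechanism underlying parts (b) and (c), including the role of the middle block for $n$ even, is accurate and goes beyond what the paper records.
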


\subsection{Support theory for ${\mathcal A}(m)$} Let $n$ be even and set $m=n/2$. The algebra ${\mathcal A}(m)$ as defined in 
\cite{Hu} is an example of a ${\mathbb Z}_{2}$-graded Clifford system (cf. \cite[Section 4]{Hu}). Set  
$B={\mathcal A}(m)$ and $B_{+}$ be the augmentation ideal of $B$. Furthermore, let $A=\He_{q}((m,m))$ be the 
subalgebra in $B$ corresponding to $B_{1}$ (in the Clifford system), and $A_{+}$ be its augmentation ideal. 
Then $B\cdot A_{+}=A_{+}\cdot B$. Now one can consider the quotient $\overline{B}=B//A\cong {\mathbb C}[{\mathbb Z}_{2}]$ 
(the group algebra of the cyclic group of order 2). 

From \cite[5.3 Proposition]{GK}, one can apply the spectral sequence and the fact that $\overline{B}$ is a semisimple algebra to show that 
\begin{equation} 
\operatorname{H}^{\bullet}(B,{\mathbb C})\cong \operatorname{H}^{\bullet}(A,{\mathbb C})^{{\mathbb Z}_{2}}.
\end{equation} 
In fact one can show that $\operatorname{H}^{\bullet}(A,{\mathbb C})$ is an integral extension of $\operatorname{H}^{\bullet}(B,{\mathbb C})$. 
If $M$ is a finite-dimensional $B$-module, we will declare that $V_{B}(M):=V_{A}(M)$ which is defined in Definition~\ref{lem:vsupport}. 

Next we will compare the notion of complexity in $\text{mod}(B)$ versus $\text{mod}(A)$. Since $B$ is a free $A$-module, any projective 
$B$-resolution restricts to a projective $A$-resolution, thus $c_{B}(M)\geq c_{A}(M)$. On the other hand, by \cite[4.4 Corollary]{Hu}, all 
simple $B$-modules are summands of simple $A$-modules induced to $B$. By applying the characterization of complexity given in 
Theorem~\ref{thm:comparerates}(a)(b) and Frobenius reciprocity, one obtains $c_{B}(M)=c_{A}(M)$.

\subsection{} Let ${\mathcal E}_{n}$ be the algebras and $f_{n}:=f_{n}(Q,q)$ (resp. $f_{n}(q)$) be the polynomials as described in Theorem~\ref{thm:Morita} under the Morita equivalence with 
$\He_{q,Q}(B_{n})$ (resp. $\He_{q}(D_{n})$). For notational convenience, set 
\begin{equation} 
\He(\Phi_{n}):= 
\begin{cases} 
\He_{q,Q}(B_{n}), & \text{$\Phi_{n}=B_{n}$}, \\
\He_{q}(D_{n}), & \text{$\Phi_{n}=D_{n}$}. 
\end{cases} 
\end{equation}
Let $F(-):\text{Mod}(\He(\Phi_{n}))\rightarrow \text{Mod}({\mathcal E}_{n})$
be functor that provides the equivalence of categories when $f_{n}$ is invertible. 
Under the equivalence of categories, one can define support varieties for modules over 
$\He(\Phi_{n})$ as follows. Let $M$ be a finite-dimensional module for $\He(\Phi_{n})$. 
Then 
$$V_{\He(\Phi_{n})}(M)=V_{{\mathcal E}_{n}}(F(M)).$$ 
The support varieties for ${\mathcal E}_{n}$ can be obtained by taking the support varieties for Hecke algebras of type $A$. 
We have the following theorem that extends Corollary~\ref{c:com-support}. 

\begin{theorem} Let $M$ be a finite-dimensional module for $\He(\Phi_n)$ with $f_{n}$ invertible. Then 
\begin{itemize} 
\item[(a)] $c_{\He(\Phi_{n})}(M)=\dim V_{\He(\Phi_n)}(M)$. 
\item[(b)] $c_{\He(\Phi_{n})}(M)\leq c_{\He(\Phi_{n})}(\C)=\lfloor \frac{n}{l} \rfloor$. 
\end{itemize} 
\end{theorem}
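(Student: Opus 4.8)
The plan is to transport the equality $c_{\He_q(d)}(M)=\dim V_{\He_q(d)}(M)$ from type $A$ (Corollary~\ref{c:com-support}) across the Morita equivalence, handling the extra summand $\mathcal{A}(m)$ in the type $D_n$, $n$ even case by the auxiliary analysis already carried out for $B=\mathcal{A}(m)$ versus $A=\He_q((m,m))$. First I would record that $F(-)$ is an equivalence of categories, so it preserves projective resolutions, hence $c_{\He(\Phi_n)}(M)=c_{\mathcal{E}_n}(F(M))$ on the nose, and it sends the trivial module $\C$ to a direct sum of modules over the various tensor-product factors $\He_q(j)\otimes\He_q(n-j)$ (and over $\mathcal{A}(m)$). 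Since $\mathcal{E}_n$ is a finite product of blocks, complexity and support variety both decompose as a maximum (resp. union) over the factors, so it suffices to prove (a) for each factor algebra separately.

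For a factor of the form $C=\He_q(a)\otimes\He_q(b)$, a module decomposes (up to the Künneth/block structure) into outer tensor products $M_1\boxtimes M_2$, and by the Künneth theorem the minimal projective resolution of $M_1\boxtimes M_2$ is the tensor product of the minimal resolutions of $M_1$ and $M_2$; hence $c_C(M_1\boxtimes M_2)=c_{\He_q(a)}(M_1)+c_{\He_q(b)}(M_2)$. On the variety side, $W_C(M_1\boxtimes M_2)=W_{\He_q(a)}(M_1)\times W_{\He_q(b)}(M_2)$ again by Künneth, and similarly the full support variety $V$ over $C$ is the product of the type-$A$ support varieties; so $\dim V_C(M_1\boxtimes M_2)$ is the sum of the two type-$A$ dimensions, and Corollary~\ref{c:com-support}(a) applied to each tensor factor finishes that case. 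For the factor $\mathcal{A}(m)$, I would invoke the already-established facts $c_B(M)=c_A(M)$ (proved just above via Theorem~\ref{thm:comparerates}(a)(b) and Frobenius reciprocity) and $V_B(M)=V_A(M)$ (the definition adopted in the text, justified since $\operatorname{H}^\bullet(A,\C)$ is an integral extension of $\operatorname{H}^\bullet(B,\C)$, so dimensions of support varieties agree), and then apply Corollary~\ref{c:com-support}(a) to $A=\He_q((m,m))$.

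Part (b) then follows by applying (a) to $M$ and to $\C$ together with Proposition~\ref{prop:Bracelet}(a) (or directly the inclusion $V_{\He(\Phi_n)}(M)\subseteq V_{\He(\Phi_n)}(\C)$, which for a product/Morita situation reduces to the type-$A$ statement Corollary~\ref{c:com-support}(b) on each factor). It remains to identify $c_{\He(\Phi_n)}(\C)=\lfloor n/l\rfloor$: under $F$, $\C$ goes to a sum of trivial-type modules over the factors $\He_q(j)\otimes\He_q(n-j)$ with $j$ ranging over $0,\dots,n$ (and over $\mathcal{A}(m)$), and by Theorem~\ref{thm:restriction} together with the remark that $\dim V_{\He_q(d)}(\C)=\lfloor d/l\rfloor$, the contribution of the factor indexed by $j$ is $\lfloor j/l\rfloor+\lfloor (n-j)/l\rfloor$, whose maximum over $j$ is $\lfloor n/l\rfloor$ (attained e.g. at $j=0$); the $\mathcal{A}(m)$-factor contributes $\dim V_{\He_q((m,m))}(\C)=\lfloor m/l\rfloor+\lfloor m/l\rfloor\le\lfloor n/l\rfloor$.

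**Main obstacle.** The genuinely delicate point is not the complexity bookkeeping but making the variety-side identifications precise under the Morita equivalence: one must know that $V_{\mathcal{E}_n}(F(M))$ — defined block-by-block via the type-$A$ machinery of Section~\ref{S:supporttheory} — behaves multiplicatively under the outer tensor product and is insensitive to the Morita equivalence, i.e.\ that the relevant $\widetilde R$-action and its annihilator transport correctly. Once one grants (via Künneth for cohomology rings and the integral-extension statement for $\mathcal{A}(m)$) that these dimensions match the complexities computed in Corollary~\ref{c:com-support}, the theorem is essentially a reduction to type $A$; so the hard part is really assembling these compatibility statements rather than any new computation.
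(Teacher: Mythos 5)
Your proposal reaches the correct conclusion and is broadly in the same spirit as the paper's, but the route is noticeably more explicit: the paper simply transports the characterization of complexity via $r\bigl(\Ext^\bullet_{\He(\Phi_n)}(S,M)\bigr)$ across the Morita equivalence $F$, cites the type-$A$ results, and is done in four lines, whereas you unwind $\mathcal{E}_n$ into its tensor-product factors and the $\mathcal{A}(m)$ summand, argue Künneth-style on each factor, and reassemble. Your version makes visible a compatibility issue (variety-side behavior under Künneth and Morita) that the paper elides with the phrase ``by using our results for the Hecke algebra for type $A$,'' so in a sense you have supplied justification the published argument takes for granted.

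Two points deserve correction or caution. First, your assertion that ``a module decomposes (up to the Künneth/block structure) into outer tensor products $M_1\boxtimes M_2$'' is false for general modules over $\He_q(a)\otimes\He_q(b)$; only the \emph{simple} modules are outer tensor products. The argument survives if you route through the Ext-from-simples characterization (Theorem~\ref{thm:comparerates}) rather than claiming a decomposition of $M$ itself, but as written the step is not valid. Second, your claim that $F(\C)$ ``goes to a sum of trivial-type modules over the factors $\He_q(j)\otimes\He_q(n-j)$ with $j$ ranging over $0,\dots,n$'' is incorrect: $\C$ is simple, $F$ is an equivalence, so $F(\C)$ is a \emph{single} simple $\mathcal{E}_n$-module, and the paper identifies it as the one labelled by the bipartition $((n),\varnothing)$, living in the lone factor $\He_q(n)\otimes\He_q(0)\cong\He_q(n)$. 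You compute a maximum of $\lfloor j/l\rfloor+\lfloor (n-j)/l\rfloor$ over all $j$, which happens to give the right number $\lfloor n/l\rfloor$ (attained at $j=0$) — but that is an accident of the arithmetic, not the correct logic; the complexity of $\C$ is determined by the single factor where $F(\C)$ actually lives, not by maximizing over a direct sum that does not occur. Your bound $\lfloor j/l\rfloor+\lfloor (n-j)/l\rfloor\le\lfloor n/l\rfloor$ is, however, exactly what is needed to justify the inequality $c_{\He(\Phi_n)}(M)\le\lfloor n/l\rfloor$ for \emph{arbitrary} $M$, a step the paper's proof of (b) asserts without comment, so that observation is a genuine improvement.
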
 

\begin{proof} (a) Let $S=\oplus_{i} S_{i}$ be the direct sum of simple $\He(\phi_{n})$-modules. Using the Morita equivalence, $F(S)$ is 
the direct sum of simple ${\mathcal E}_{n}$-modules. Furthermore, by using our results for the Hecke algebra for type $A$, 
\begin{eqnarray*} 
c_{\He(\Phi_{n})}(M)&=& r(\Ext^{\bullet}_{\He(\Phi_{n})}(S,M)) \\
&=& r(\Ext^{\bullet}_{{\mathcal E}_{n}}(F(S),F(M))) \\
&=& \dim V_{{\mathcal E}_{n}}(F(M)) \\
&=& \dim V_{\He(\Phi_{n})}(M).
\end{eqnarray*} 

(b) One has that 
\[ c_{\He(\Phi_{n})}(M)=\dim V_{{\mathcal E}_{n}}(F(M))\leq \left\lfloor \frac{n}{l} \right\rfloor. \]
Let $L$ be the irreducible ${\mathcal E}_{n}$-module such that $F(\C)=T$. Under the categorical equivalence, the trivial module $\C$ goes to the simple 
${\mathcal E}_{n}$-module labelled by the partition $((n),\varnothing)$. 
The statement now follows because 
\[ c_{\He(\Phi_{n})}(\C)=\dim V_{{\mathcal E}_{n}}(T)=\left\lfloor \frac{n}{l} \right\rfloor. \qedhere \]
\end{proof} 	

By using the Morita equivalence one can prove analogs of Theorem~\ref{thm:location} for the blocks of $\He(\Phi_{n})$ and obtain the location of their support varieties 
for various modules. One can pose an interesting question if one can (i) extend the support variety theory for Hecke algebra of types $B_{n}$ and $D_{n}$ to even roots of unity, and (ii) if a theory of 
support varieties can be developed in for Hecke algebras of other Coxeter groups. 

\let\section=\oldsection


\begin{thebibliography}{999999999}\frenchspacing

\bibitem[\sf A]{A} J.L. Alperin, Periodicity in Groups, {\em Illinois J. Math.}, {\sf 21}, (1977), 776-783. 

\bibitem[\sf Ben]{Ben} D.J. Benson, {\em Representations and Cohomology II}, Cambridge University Press, Cambridge, 1991. 

\bibitem[\sf BEM]{BEM} D. Benson, K. Erdmann and A. Mikaelian, Cohomology of Hecke algebras, {\em Homology, Homotopy and Applications}, {\sf 12}, (2010), no. 2, 353-370.

\bibitem[\sf BNPP]{BNPP} C.P. Bendel, D.K. Nakano, B.J. Parshall, C. Pillen, Quantum group cohomology via the geometry of the nullcone, {\em Memoirs of the AMS},   
{\sf 229}, no. 1077, (2014). 

\bibitem[\sf BR]{BR} D.G. Brown, Relative cohomology of finite groups and polynomial growth, {\em Journal of Pure and Applied Algebra},  {\sf 97}, no. 1, (1994), 1--13.

\bibitem[\sf Car]{Car} R.W. Carter, {\em finite groups of lie type}, 1985.

\bibitem[\sf CPS]{CPS} E. Cline, B. Parshall and L. Scott, Stratifying endomorphism algebras, {\em Memoirs of Amer. Math. Soc.},  {\sf 124},  (1996), no 591, vii+119pp.

\bibitem[\sf CHN]{CHN} F.R. Cohen, D.J. Hemmer, D.K. Nakano, The Lie module and its complexity, {\em Bulletin of the London Math. Soc.},  {\sf 48},  (2016), 109-114. 

\bibitem[\sf DJ1]{DJ1} R. Dipper and G.D. James, Representations of Hecke algebras of general linear groups, {\em Proc. London Math. Soc.},  {\sf (3) 52}, (1986), no. 1, 20--52.

\bibitem[\sf DJ2]{DJq}R. Dipper and G.D. James, q-Tensor spaces and q-Weyl modules, {\em Trans. Amer. Math. Soc.}, {\sf 327}, (1991), 251-282.

\bibitem[\sf D1]{D}S. Donkin, A Filtration for Rational Modules, {\em Math. Zeit.}, {\sf 177}, (1981), 1-8.

\bibitem[\sf D2]{D2}S. Donkin, On Schur algebras and related algebras, II, {\em J. Algebra}, {\sf 111}, (1987), 354-364.

\bibitem[\sf D3]{D3}S. Donkin, {\em The q-Schur Algebra}, Cambridge University Press, Cambridge, 1998.

\bibitem[\sf D4]{D4}S. Donkin, Standard homological properties for quantum $GL_n$, {\em J. Algebra}, {\sf 181}, (1996), 235-266.

\bibitem[\sf DD]{DD}R.~Dipper, J.~Du, Trivial and alternating source modules of Hecke algebras of type A, {\em Proc. London Math. Soc.}, {\sf s3-66}, (1993), no. 3, 479--506.

\bibitem[\sf DJ]{Dipper-James} R. Dipper, G.D. James, Representations of Hecke algebras of type $B_{n}$, {\em J. Algebra}, {\sf 146}, (1992), 454-481. 

\bibitem[\sf DEN]{DEN} S.R. Doty, K. Erdmann and D.K. Nakano, Extensions of modules over Schur algebras, symmetric groups, and Hecke algebras, {\em Algebras and Representation Theory},  
{\sf 7}, (2004), 67-100. 

\bibitem[\sf DN]{DN}S. R. Doty and D.K. Nakano, Relating the cohomology of general linear groups and symmetric groups, in {\em Modular Representation Theory of Finite Groups (Charlottesville, VA, 1998)}, 175-187, deGruyter, Berlin, 2001.

\bibitem[\sf Du]{Du} J. Du, The Green correspondence for the representations of Hecke algebras of type $A_{r - 1}$, {\em Trans. Amer. Math. Soc.}, {\sf 329}, (1992), 273-287.

\bibitem[\sf DPS1]{DPS1} J. Du, B. Parshall and L. Scott, Quantum Weyl reciprocity and tilting modules, {\em Comm. Math. Phys.}, {\sf 195}, (1998), 321-352.

\bibitem[\sf DPS2]{DPS2} J. Du, B. Parshall and L. Scott, Extending Hecke endomorphism algebras at roots of unity, ArXiv:1501.06481

\bibitem[\sf DPS3]{DPS3} J. Du, B. Parshall and L. Scott, Stratifying endomorphism algebras using exact categories, ArXiv:1601.01062

\bibitem[\sf Ev]{Evens} L. Evens, {\em The Cohomology of Groups}, Oxford University Press, New York, 1991. 

\bibitem[\sf ELT]{ELT} K. Erdmann, K. Lim, K. Tan, The complexity of the Lie module, {\em Proc. Edinburgh Math. Soc.}, {\sf 57}, (2014), 393-404.

\bibitem[\sf FP1]{FP1} E.M. Friedlander, B.J. Parshall, Support varieties for restricted Lie algebras, {\em Invent. Math.}, {\sf 86}, (1986), 553-562. 

\bibitem[\sf FP2]{FP2} E.M. Friedlander, B.J. Parshall, Geometry of p-unipotent Lie algebras, {\em J. Algebra}, {\sf 109}, (1987), 25-45. 

\bibitem[\sf FPe]{FPe} E.M. Friedlander, J. Pevtsova, Representation theoretic support spaces for finite group schemes, {\em Amer. J. Math.}, {\sf 127}, (2005), 379--420. 

\bibitem[\sf GK]{GK} V. Ginzburg, S. Kumar, Cohomology of quantum groups at roots of unity, {\em Duke J. Math.}, {\sf 69}, (1993), 179--198. 

\bibitem[\sf Gr]{Gr} J.A. Green, {\em Polynomial Representations of $GL_n$,} Lecture Notes in Mathematics 830, Springer-Verlag, New York, 1980.

\bibitem[\sf HN]{HN} D.J. Hemmer, D.K. Nakano, Support varieties for modules for symmetric groups, {\em J. Algebra}, {\sf 254}, (2002), 422-440. 

\bibitem[\sf Ho]{Ho} G. Hochschild, Relative homological algebra, {\em Trans. Amer. Math. Soc.}, {\sf 82(1)}, (1956), 246-269. 

\bibitem[\sf Hu]{Hu} J. Hu, A Morita equivalence theorem for Hecke algebra $\He_{q}(D_{n})$ when $n$ is even, {\em Manuscripta Math.}, {\sf 108}, (2002), 409-430. 

\bibitem[\sf J1]{J1} G.D. James, {\em The representation theory of the symmetric groups}, Springer Lecture Notes 682, Berlin, Heidelberg, New York, 1978.

\bibitem[\sf J2]{J2} G.D. James, The decomposition of tensors over fields of prime characteristic, {\em Math. Zeit.}, {\sf 172}, (1980), 161-178.

\bibitem[\sf JK]{JK} G.D. James and A. Kerber, {\em The representation theory of the symmetric group}, Addison-Wesley, London, 1981.

\bibitem[\sf JP]{JP} G.D. James and M.H. Peel, Specht series for skew representations of symmetric groups, {\em J. Algebra}, {\sf 56}, (1979), 343-364.

\bibitem[\sf Ja]{Jan} J.C. Jantzen, {\em Representations of Algebraic Groups}, Academic Press Inc., 1987.

\bibitem[\sf Jo]{Jo} L.K. Jones, Centers of generic Hecke algebras, {\em Trans. Amer. Math. Soc.}, {\sf 317}, (1990), 361-392.

\bibitem[\sf KN]{KN}A.S. Kleshchev and D.K. Nakano, On comparing the cohomology of general linear and symmetric groups, {\em Pacific J. Math.}, {\sf 201}, (2001), 339-355.

\bibitem[\sf Lim]{Lim} K.J. Lim, The varieties for some Specht modules, {\em Journal of Algebra}, {\sf 321(8)}, (2009), 2287-2301.

\bibitem[\sf L]{L} M. Linckelmann, Finite generation of Hochschild cohomology of Hecke algebras of finite classical type in characteristic zero, {\em Bull. London Math. Soc.}, {\sf 43(5)}, (2011), 871-885.

\bibitem[\sf Mac]{Mac}I.G. Macdonald, {\em Symmetric functions and {H}all polynomials}, Oxford university press, 1998.

\bibitem[\sf Mar]{M}S. Martin, {\em Schur algebras and representation theory}, Cambridge University Press, Cambridge, 1993.

\bibitem[\sf Mat]{Mat}A. Mathas, {\em Iwahori-Hecke algebras and Schur algebras of the symmetric group}, American Mathematical Society, Providence, 1999.

\bibitem[\sf NPal]{NPal} D.K. Nakano, J.H. Palmieri, Support varieties for the Steenrod algebra, {\em Math. Zeit.}, {\sf 227}, (1998), 663--684.

\bibitem[\sf P]{Pal} C. Pallikaros, Representations of Hecke algebras of type $D_{n}$, {\em J. Algebra}, {\sf 169}, (1994), 20--48. 

\bibitem[\sf R]{R}C.M. Ringel, The category of modules with good filtrations over a quasi-hereditary algebra has almost split sequences, {\em Math Zeit.}, {\sf 208}, (1991), 209-223.
\end{thebibliography}
\end{document}